\newtheorem{theorem}{Theorem}[section]
\newtheorem{proposition}[theorem]{Proposition}
\newtheorem*{theoremp}{Theorem}
\newtheorem{lemma}[theorem]{Lemma}
\newtheorem{corollary}[theorem]{Corollary}
\newtheorem{conjecture}[theorem]{Conjecture}
\newtheorem{definition}[theorem]{Definition}
\newcommand{\R}{\mathds{R}}
\newcommand{\Z}{\mathbb{Z}}
\newcommand{\ff}{\mathcal{F}}
\newcommand{\F}{\mathcal{F}}
\newcommand{\h}{{\mathds H}}
\newcommand{\tv}{{\mathds T}}
\newcommand{\cf}{{\cal F}}
\newcommand{\ck}{{\cal K}}
\newcommand{\ep}{\varepsilon}
\DeclareMathOperator{\conv}{conv}
\DeclareMathOperator{\vol}{vol}
\title{\bf Quantitative Tverberg, Helly, \& Carath\'eodory theorems}
\author{J.A. De Loera \and R. N. La Haye \and D. Rolnick \and P. Sober\'on}
\begin{document}

\maketitle


\section{Introduction}

Carath\'eodory's, Helly's, and Tverberg's theorems are among the most important theorems in convex geometry. Many generalizations and extensions, including colorful, fractional, and topological versions, have been developed and are a bounty for geometers. For a glimpse of the extensive literature see \cite{DGKsurvey63,Eckhoffsurvey93,Mbook,Wen1997,3nziegler} and the references therein. Our paper presents new quantitative versions of these classical theorems.  We distinguish between \emph{continuous quantitative} results, where we measure the size of our sets with a parameter, such as the volume or the diameter, which can vary continuously, and \emph{discrete quantitative} results, where we measure the size of our sets with an enumerative value, such as the number of lattice points they contain. The tables below summarize our results and prior work.

{\footnotesize

\begin{table}[h]\label{tablechida}
\begin{tabular}{|l|l|l|l|}
\hline \multicolumn{1}{|c|}{\em Monochromatic version} &\multicolumn{1}{|c|}{Carath\'eodory} & \multicolumn{1}{|c|}{Helly}  &\multicolumn{1}{|c|}{Tverberg}  \\
\hline Standard & \checkmark  & \checkmark  &  \checkmark+$(1.19-21)$ \\
\hline  Continuous Quantitative & \checkmark+ $(1.1-2,2.4-5)$& \checkmark+ $(1.4-5)$ & $(1.12)$ \\
\hline Discrete Quantitative & \checkmark+ $(1.3)$ & \checkmark+$(1.9)$ & $(1.17-18)$ \\
\hline   
\end{tabular}\vspace{12pt}
\begin{tabular}{|l|l|l|l|}
\hline \multicolumn{1}{|c|}{\em Colorful version} &\multicolumn{1}{|c|}{Carath\'eodory} & \multicolumn{1}{|c|}{Helly}  &\multicolumn{1}{|c|}{Tverberg}  \\
\hline Standard & \checkmark  & \checkmark & \checkmark \\
\hline  Continuous Quantitative & $(1.1-2, 2.4-5)$ & $(1.6)$ & $(1.14)$  \\
\hline Discrete Quantitative & $(1.3)$ & \checkmark+$(1.10-11)$  & ? \\
\hline 
\end{tabular}\vspace{12pt}
\caption{\emph{Prior and new results in quantitative combinatorial convexity.} The symbol $\checkmark $ means some prior result was known, $(\#)$ indicates the number of the theorem that is the first such result or a stronger version of prior results, and $?$ indicates an open problem.}\vspace{-12pt}
\end{table}

}

\subsection*{Classical versus quantitative theorems: history and results}
Before stating our main contributions, we recall the three classical theorems that are at the core of our work:

\begin{theoremp}[C.~Carath\'eodory 1911 \cite{originalCaratheodory}] 
\label{thm:Caratheodory}
Let $S$ be any subset of $\R^d$. Then each point in the convex hull of $S$ is a convex combination of at most $d+1$ points of $S$.
\end{theoremp}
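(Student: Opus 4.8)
The plan is to run the classical affine-dependence reduction. By the very definition of the convex hull, any $x \in \conv(S)$ can be written as a \emph{finite} convex combination $x = \sum_{i=1}^{k} \lambda_i s_i$ with $s_1,\dots,s_k \in S$, $\lambda_i > 0$, and $\sum_{i=1}^{k}\lambda_i = 1$. Among all representations of this form, I would fix one for which the number of terms $k$ is as small as possible (a well-defined minimum since $k \in \N$), and then argue that necessarily $k \le d+1$.

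Suppose, for contradiction, that $k \ge d+2$. Then the $k-1 \ge d+1$ vectors $s_2 - s_1,\, s_3 - s_1,\, \dots,\, s_k - s_1$ lie in $\R^d$ and are therefore linearly dependent, so there exist scalars $\mu_2,\dots,\mu_k$, not all zero, with $\sum_{i=2}^{k}\mu_i(s_i - s_1) = 0$. Putting $\mu_1 := -\sum_{i=2}^{k}\mu_i$, I obtain a nontrivial affine dependence among the $s_i$, namely $\sum_{i=1}^{k}\mu_i s_i = 0$ together with $\sum_{i=1}^{k}\mu_i = 0$.

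Now I would perturb the given representation along this dependence: for every $t \in \R$ one has $x = \sum_{i=1}^{k}(\lambda_i - t\mu_i)s_i$ and $\sum_{i=1}^{k}(\lambda_i - t\mu_i) = 1$. Since the $\mu_i$ are not all zero and sum to $0$, at least one of them is strictly positive; set $t^{*} := \min\{\lambda_i/\mu_i : \mu_i > 0\}$, attained at some index $j$. Checking the three cases $\mu_i > 0$, $\mu_i = 0$, $\mu_i < 0$ separately shows $\lambda_i - t^{*}\mu_i \ge 0$ for all $i$, while $\lambda_j - t^{*}\mu_j = 0$. Discarding the $j$-th term then exhibits $x$ as a convex combination of at most $k-1$ points of $S$, contradicting the minimality of $k$. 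Hence $k \le d+1$, which is exactly the assertion.

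The argument is essentially routine; the one place that needs care is the selection of the step size $t^{*}$: it must be the minimum of $\lambda_i/\mu_i$ over the indices with $\mu_i > 0$ only (not over all $\mu_i \ne 0$), which is precisely what forces every perturbed weight to remain nonnegative while at least one is driven down to $0$. I do not expect any genuine obstacle beyond this bookkeeping.
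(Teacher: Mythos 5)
Your argument is correct: it is the standard affine-dependence reduction proof of Carath\'eodory's theorem, and the delicate point (taking $t^{*}$ as the minimum of $\lambda_i/\mu_i$ only over indices with $\mu_i>0$, so all perturbed weights stay nonnegative while one vanishes) is handled properly. The paper states this classical result without proof, citing Carath\'eodory's original article, and your proof is the usual textbook argument, so it matches the intended (standard) approach.
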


\begin{theoremp}[E.~Helly, 1913 \cite{originalHelly}] 
\label{thm:Helly}
Let $\cf$ be a finite family of convex sets of $\R^d$. If $\bigcap \ck \neq
\emptyset$ for all $\ck \subset \cf$ of cardinality at most $d+1$, then
$\bigcap \cf \neq \emptyset$. 
\end{theoremp}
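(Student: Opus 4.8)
The plan is to prove Helly's theorem by induction on the cardinality $n = |\cf|$, with the essential combinatorial input supplied by Radon's partition theorem. For $n \le d+1$ the conclusion is exactly the hypothesis, so we may assume $n \ge d+2$ and write $\cf = \{C_1, \dots, C_n\}$; here finiteness is used only to make the induction legitimate.

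First I would establish (or recall) Radon's theorem: any $d+2$ points $x_1, \dots, x_{d+2} \in \R^d$ can be split into two disjoint index sets whose convex hulls meet. This holds because $d+2$ points in $\R^d$ are affinely dependent, so there exist scalars $\lambda_i$, not all zero, with $\sum_i \lambda_i x_i = 0$ and $\sum_i \lambda_i = 0$; partitioning the indices by the sign of $\lambda_i$ and normalizing the positive (resp. negative) part produces a point lying in both convex hulls. I expect this to be the one genuinely nontrivial ingredient, though it is short—everything after it is bookkeeping.

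For the inductive step, apply the induction hypothesis to each subfamily $\cf \setminus \{C_i\}$: it has $n-1 \ge d+1$ members and every $d+1$ of them still intersect, so there is a point $x_i \in \bigcap_{j \neq i} C_j$. Since $n \ge d+2$, Radon's theorem applied to $x_1, \dots, x_n$ yields a partition of $\{1,\dots,n\}$ into $I$ and $J$ together with a point $p \in \conv\{x_i : i \in I\} \cap \conv\{x_j : j \in J\}$.

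Finally I would verify $p \in \bigcap_k C_k$. Fix $k$. If $k \in I$ then $k \notin J$, so every $x_j$ with $j \in J$ belongs to $C_k$ by the choice of $x_j$; convexity of $C_k$ gives $\conv\{x_j : j \in J\} \subseteq C_k$, hence $p \in C_k$. The case $k \in J$ is symmetric, using $\conv\{x_i : i \in I\} \subseteq C_k$. This closes the induction. The only real obstacle is recognizing that a Radon partition is the correct device for fusing the $n$ "near-witnesses" $x_i$ into a single common point; once that is identified, the remaining steps are routine.
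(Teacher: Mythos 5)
Your proof is correct. Note that the paper does not prove this statement at all: Helly's theorem is quoted as classical background (cited to Helly's 1913 paper), so there is no in-paper argument to compare against. What you give is the standard Radon-partition induction (the same one used in \cite{Mbook}), and every step checks out; the only cosmetic point is that you state Radon's theorem for exactly $d+2$ points but invoke it for all $n \ge d+2$ of the points $x_1,\dots,x_n$ --- this is harmless, since either the affine-dependence argument applies verbatim to any $n\ge d+2$ points (indices with zero coefficient may be placed on either side), or one may apply Radon to just $x_1,\dots,x_{d+2}$ and observe that your final verification still goes through, because for any index $k$ outside the chosen $d+2$ indices all of $x_1,\dots,x_{d+2}$ already lie in $C_k$.
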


\begin{theoremp}[H.~Tverberg, 1966 \cite{Tverberg:1966tb}]
\label{thm:Tverberg}
 Let $a_{1},\ldots,a_{n}$ be points in $\R^{d}$.
If the number of points satisfies $n >(d+1)(m-1)$, then they can be partitioned into $m$ disjoint parts $A_{1},\ldots,A_{m}$ in such
a way that the $m$ convex hulls $\conv A_1, \ldots, \conv A_m$ have a point in common.
\end{theoremp}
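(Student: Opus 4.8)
The plan is to follow \emph{Sarkaria's tensor trick}, which reduces Tverberg's theorem to a colorful version of Carath\'eodory's theorem. The ingredient I would take for granted is the \emph{Colorful Carath\'eodory theorem} of B\'ar\'any: if $X_1,\dots,X_{D+1}$ are subsets of $\R^D$ and a point $p$ lies in $\conv X_i$ for every $i$, then one can pick $x_i\in X_i$ so that $p\in\conv\{x_1,\dots,x_{D+1}\}$. It is enough to prove Tverberg's theorem in the threshold case $n=(d+1)(m-1)+1$: given more points, set aside $n-(d+1)(m-1)-1$ of them, partition the rest, and append the leftovers to any single part, since enlarging a convex hull never destroys the common point.

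First I would build the right ambient space. Lift each point to $\hat a_i=(a_i,1)\in\R^{d+1}$, and fix vectors $v_1,\dots,v_m\in\R^{m-1}$ that are the vertices of a simplex centered at the origin, so that $v_1+\dots+v_m=0$ while $v_1,\dots,v_{m-1}$ are linearly independent. Working in $\R^{(d+1)(m-1)}\cong\R^{d+1}\otimes\R^{m-1}$, I would define, for each $i=1,\dots,n$, the color class $X_i=\{\hat a_i\otimes v_1,\dots,\hat a_i\otimes v_m\}$; the centroid identity $\frac1m\sum_{j=1}^m\hat a_i\otimes v_j=\hat a_i\otimes\bigl(\tfrac1m\sum_j v_j\bigr)=0$ shows $0\in\conv X_i$. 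Crucially the number of color classes equals $n=(d+1)(m-1)+1$, which is exactly $D+1$ for $D=(d+1)(m-1)$, so the Colorful Carath\'eodory theorem applies.

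Next, Colorful Carath\'eodory produces a choice $x_i=\hat a_i\otimes v_{\sigma(i)}$ for some map $\sigma\colon\{1,\dots,n\}\to\{1,\dots,m\}$, together with weights $\lambda_i\ge 0$ summing to $1$ with $\sum_i\lambda_i\,\hat a_i\otimes v_{\sigma(i)}=0$. Grouping terms by color and writing $w_j=\sum_{i:\sigma(i)=j}\lambda_i\hat a_i\in\R^{d+1}$ turns this into $\sum_{j=1}^m w_j\otimes v_j=0$; substituting $v_m=-(v_1+\dots+v_{m-1})$ gives $\sum_{j=1}^{m-1}(w_j-w_m)\otimes v_j=0$, and the independence of $v_1,\dots,v_{m-1}$ forces $w_1=\dots=w_m=:w$. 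Reading the last coordinate of $\hat a_i=(a_i,1)$, the equalities $w_1=\dots=w_m$ give $\sum_{i:\sigma(i)=j}\lambda_i=\tfrac1m$ for all $j$ (so every color is actually used), and the first $d$ coordinates give a single point $p\in\R^d$ with $\sum_{i:\sigma(i)=j}\lambda_i a_i=p$ for all $j$. Hence, setting $A_j=\{a_i:\sigma(i)=j\}$, each $mp=\sum_{i:\sigma(i)=j}(m\lambda_i)a_i$ is a convex combination of the points in $A_j$, so $mp\in\bigcap_{j=1}^m\conv A_j$, which is the conclusion.

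The hard part is essentially outsourced: all the content is hidden in the Colorful Carath\'eodory theorem, whose own proof needs a separate local-exchange (perturbation) argument and is not a formal consequence of Carath\'eodory's theorem as stated above. Within the present argument, the only genuinely clever move is the tensor identification, which converts ``one point chosen from each color'' into ``one part of the partition for each color''; the linear algebra that extracts $w_1=\dots=w_m$ and the bookkeeping with the homogenizing last coordinate are then routine.
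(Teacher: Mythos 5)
Your argument is correct: it is the standard Sarkaria tensor trick (as streamlined by B\'ar\'any--Onn), and all the steps check out --- the lift $\hat a_i=(a_i,1)$, the choice of simplex vertices $v_j$ summing to zero, the dimension count $n=(d+1)(m-1)+1=D+1$, the reduction of $\sum_j w_j\otimes v_j=0$ to $w_1=\cdots=w_m$ via linear independence of $v_1,\dots,v_{m-1}$, and the last-coordinate bookkeeping that yields equal weights $1/m$ (which also guarantees every part is nonempty) and a common point $mp$ of the hulls. The reduction from $n>(d+1)(m-1)$ to the threshold case by dumping leftover points into one part is also fine. There is nothing in the paper to compare against: Tverberg's theorem is quoted there as a classical result with a citation to Tverberg's 1966 paper, not proved; the route you take is precisely one of the ``simpler proofs'' the authors reference (Sarkaria 1992, B\'ar\'any--Onn), and it rests on the colorful Carath\'eodory theorem of B\'ar\'any, which the paper also quotes and which, as you correctly flag, carries the real content and requires its own argument.
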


The case of $m=2$ in Tverberg's theorem was proved in 1921 by J.~Radon \cite{originalRadon} and is often referred to as Radon's theorem or Radon's lemma. See \cite{Mbook} for an introduction to combinatorial convexity.

This paper provides several new quantitative versions of these three theorems where now the hypothesis and conclusion of theorems include measurable or enumerable information. Typical measurements involve the volume, the diameter, or the number of lattice points.

A key idea in our proofs of continuous quantitative results is showing a link to the efficient approximation of convex sets by polytopes. Convex body approximation is an active field that has seen great advances recently, which we apply in this article. We later state precisely the results we need, but recommend \cite{gruber1993aspects, bronstein2008approximation} for references on the subject.

On the other hand, for the proofs of the discrete quantitative theorems, we employ the fact that arguments work
more generally for \emph{restricted convexity} over discrete sets of $\R^d$. This means the sets we consider are the intersections of usual convex sets in $\R^d$ with a subset $S$ of $\R^d$ (e.g., $S=\Z^d$). For instance, we provide an enumerative generalization of a 1979 theorem by A.J.~Hoffman on how to compute the Helly number and a new notion of \emph{quantitative Helly number}. These two play a key role in our results for Helly and Tverberg theorems. Although we choose not to work in the most abstract and general setting possible, we note the convex hull operator in $\R^d$ equips $S$ with the structure of a general \emph{convexity space}. Convexity spaces are an axiomatic abstraction of usual convexity over $\R^d$ and many notions discussed 
here are valid even in that context. See \cite{AW2012,Kay:1971uf,queretaro,Doignon:1981fv,Jamison:1981wz, convexityspaces-vandevel} for more on this subject.

Finally, an important point we wish to stress is that we managed to present
an interconnected theory where our Carath\'eodory-type theorems
imply Helly-type results, and they in turn imply Tverberg-type statements.
For example, in Corollary \ref{corollary-simple-S-tverberg} we show that Tverberg numbers exist whenever Helly numbers exist, and Theorem \ref{thm:quantitative-disc-tverberg}.

The rest of the introduction lists our new theorems divided by type. In Section \ref{section-caratheodory} we give the proofs related to Carath\'eodory-type results, in Section \ref{section-helly} those related to Helly-type results, and finally in Section \ref{section-Tverberg} the proofs of Tverberg-type results.


\subsection*{Carath\'eodory-type contributions}

Carath\'eodory's theorem has interesting consequences and extensions (e.g., \cite{baranyonn-colorfulLP,Mbook}).
In 1914, the great geometer Steinitz improved the original proof by Carath\'eodory (which applied only to compact sets \cite{originalCaratheodory}) and at the same time he was the first to realize that this theorem has a nice version for points in the interior of a convex set:

\begin{theoremp}[E.~Steinitz, 1914 \cite{originalsteinitz}]
Consider $X \subset \R^d$ and $x$ a point in the interior 
of the convex hull of $S$. Then, $x$ belongs to the interior of the convex hull of a set of at most $2d$ points of $X$.
\end{theoremp}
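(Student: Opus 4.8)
The plan is to normalise, pass to a finite point set, and then repeatedly delete points from a convex-combination witness while keeping that witness both strictly positive and affinely spanning.

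First I would translate so that $x=0$, so the hypothesis reads $0\in\inter\conv X$. Picking $\rho>0$ with $\rho B^d\subseteq\conv X$ and placing a small $d$-simplex around the origin inside that ball, each of its $d+1$ vertices is, by Carath\'eodory's theorem, a convex combination of at most $d+1$ points of $X$; the (finite) union $X_0$ of all these points still satisfies $0\in\inter\conv X_0$, since $\conv X_0$ contains the simplex. So it is enough to handle a finite set $X_0=\{x_1,\dots,x_n\}$, and I may further assume $0\notin X_0$, since $0\in\inter\conv X_0$ forces $0\in\inter\conv(X_0\setminus\{0\})$ via a separating-hyperplane argument. Next I would use the elementary reformulation that $0\in\inter\conv\{x_1,\dots,x_n\}$ holds precisely when there is a convex combination $0=\sum_{i=1}^n\lambda_i x_i$ with all $\lambda_i>0$ (this says exactly that $0$ lies in the \emph{relative} interior) together with $\aff\{x_1,\dots,x_n\}=\R^d$ (this upgrades relative interior to interior). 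The task becomes: starting from such a configuration, get down to $n\le 2d$ without losing either property.

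The heart is a deletion lemma: if $n\ge 2d+1$, some $x_{i_0}$ can be removed so that $0$ is still a positive convex combination of the remaining points and these still affinely span $\R^d$. Homogenising via $\hat x_i=(x_i,1)\in\R^{d+1}$, the space $D$ of affine dependencies $\{\mu:\sum_i\mu_i\hat x_i=0\}$ has dimension $n-(d+1)\ge d\ge1$, so it contains a nonzero $\mu$, and since $\sum_i\mu_i=0$ such a $\mu$ has a strictly positive entry. Call an index $i$ \emph{essential} if $x_i\notin\aff\{x_j:j\ne i\}$; then $\mu_i=0$ for every $\mu\in D$ whenever $i$ is essential. The combinatorial core is the bound: once $n\ge d+2$ there are at most $d$ essential indices. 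Indeed, $d+1$ essential indices would give $d+1$ affinely independent points (any affine dependence among them would push one into the affine hull of the others, i.e.\ into the very hyperplane $\aff\{x_j:j\ne i\}$ that essentialness forbids), hence a $d$-simplex whose facet-hyperplanes are exactly the sets $\aff\{x_j:j\ne i\}$; but any further point --- one exists since $n\ge d+2$ --- would lie in all $d+1$ of them, which is impossible since the facet-hyperplanes of a simplex have empty common intersection. Granting this, I would pick a nonzero $\mu\in D$ with a positive entry together with any valid $\lambda$, and run the standard Carath\'eodory-type reduction: at $t^\ast=\min_{i:\mu_i>0}\lambda_i/\mu_i$ all the numbers $\lambda_i-t^\ast\mu_i$ are $\ge0$, they sum to $1$, they still weight the $x_i$ to $0$, and some coordinate $i_0$ vanishes. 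Choosing $\mu$ and $\lambda$ generically so this minimum is attained at a unique index, every surviving coefficient is strictly positive; and because $\mu_{i_0}\ne0$, the index $i_0$ is non-essential, so $\aff\{x_j:j\ne i_0\}=\R^d$ is preserved. Iterating drives $n$ down to $2d$ (and the cross-polytope $\{\pm e_1,\dots,\pm e_d\}$ shows one cannot do better, which is why the recursion naturally halts there).

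I expect the two genuine difficulties to be exactly the points flagged above: proving the bound of $d$ on the number of essential points, and the bookkeeping in the deletion step that preserves \emph{simultaneously} the strict positivity of the surviving coefficients and the full affine span --- in particular, justifying that a generic choice of dependency and representation makes the minimising ratio unique, and checking the very low-dimensional degenerate cases directly. The translation, the finiteness reduction, and the interior/relative-interior reformulation are all routine.
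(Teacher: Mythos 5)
You should first note that the paper does not prove this statement at all---it quotes Steinitz's 1914 theorem with a citation---so your argument has to stand on its own, and it does not: the gap is precisely at the step you flagged, and it is not a routine bookkeeping issue. Your deletion lemma, as argued, uses only $n\ge d+2$ (existence of a nonzero dependence $\mu$, at most $d$ essential indices); the hypothesis $n\ge 2d+1$ never actually enters. If the step were correct as written, iterating it would reduce any configuration to $d+1$ points, which is false. Concretely, take $X=\{\pm e_1,\dots,\pm e_d\}$, the cross-polytope vertices, with $n=2d\ge d+2$: no point can be deleted, since removing $\pm e_i$ leaves the hull inside the halfspace $\{\mp x_i\ge 0\}$. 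That example also shows the ``generic choice'' claim can fail structurally rather than merely needing care: every admissible representation there satisfies $\lambda_i=\lambda_{d+i}$ and every dependence satisfies $\mu_i=\mu_{d+i}$, so the minimizing ratio is always attained at an antipodal pair no matter how $\lambda$ and $\mu$ are chosen; two coefficients vanish simultaneously and the surviving points no longer affinely span $\R^d$. So the statement you need---that for $n\ge 2d+1$ one can pick $(\lambda,\mu)$ with a unique minimizing index---is true (it is equivalent to Steinitz's theorem for finite sets), but your proposal gives no argument for it, and any correct argument must use the threshold $2d+1$ in an essential way, which yours nowhere does. The essential-index bound, while correct, does not control ties and so does not do this work.

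The surrounding material is fine: the reduction to a finite set, the characterization of $0\in\inter\conv\{x_1,\dots,x_n\}$ as a strictly positive convex representation using all points together with $\aff\{x_1,\dots,x_n\}=\R^d$, the at-most-$d$ bound on essential indices, and the preservation of the affine span when the deleted index has $\mu_{i_0}\neq 0$. To repair the core you must handle simultaneous vanishing honestly---for instance, when the elimination kills several coefficients and the affine span drops by $k$ dimensions, restore full dimension by re-adjoining at most $2k$ points from $X$ (this dimension-repair step is exactly where the factor $2$ in the bound $2d$ comes from, and it is how the classical proofs proceed, e.g.\ by taking an inclusion-minimal subset with $0$ in the relative interior of its hull and augmenting the dimension two points at a time). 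As it stands, the key lemma is asserted, not proved.
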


A \emph{Carath\'eodory-type theorem} has a similar setup  where the points of the convex hull of a set $S$ can be expressed as convex combinations of a given number of generators with some additional conditions imposed. A monochromatic quantitative Carath\'eodory-type theorem was first proved by  B\'ar\'any, Katchalski, and Pach. These three mathematicians were the first to present quantitative theorems in combinatorial convexity.  We denote by $B_r(p) \subset \R^d$ the Euclidean ball of radius $r$ with center $p$.

\begin{theoremp}[I.~B\'ar\'any, M.~Katchalski, J.~Pach, 1982 \cite{baranykatchalskipach}]
There is a constant $r(d) \ge d^{-2d}$ such that the following statement holds.  For any set $X$ such that $B_1(0) \subset \conv X$, there is a subset $X' \subset X$ of at most $2d$ points that satisfies $B_{r(d)}(0) \subset \conv X'$.
\end{theoremp}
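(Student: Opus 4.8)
The plan is to extract from the hypothesis $B_1(0)\subseteq\conv X$ a single inscribed simplex with vertices in $X$ whose shape is tightly controlled, and then to adjoin a few more points of $X$ in order to slide a small ball about the origin inside its convex hull.

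A routine compactness argument — using that $\conv X$ is the directed union of the polytopes $\conv F$ over finite $F\subseteq X$ — lets me assume $X$ is finite. Then I would pick $v_0,\dots,v_d\in X$ maximizing $\vol\conv\{v_0,\dots,v_d\}$, set $T=\conv\{v_0,\dots,v_d\}$ with centroid $g$, and invoke the classical fact that, since each $v_i$ is a point of $\conv X$ at maximal distance from $\aff\{v_j:j\neq i\}$, one has $\conv X\subseteq g-d(T-g)$, the image of $T$ under the homothety of ratio $-d$ about $g$. This enlarged simplex then contains $B_1(0)$, so its inradius is $\ge1$ and that of $T$ is $\ge1/d$. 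Writing $\lambda_0,\dots,\lambda_d$ for the barycentric coordinates with respect to $T$ and $h_i$ for the height from $v_i$ (so that $\|\nabla\lambda_i\|=1/h_i$), the inclusion $B_1(0)\subseteq g-d(T-g)$ forces $\lambda_i(0)\le1-1/h_i<1$ for every $i$; summing gives $\sum_i1/h_i\le d$ (in particular each $h_i\ge1/d$), and since $\sum_i\lambda_i(0)=1$ at least two of the numbers $\lambda_i(0)$ are positive.

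For the re-centering, set $S=\{i:\lambda_i(0)\le0\}$, so $|S|\le d-1$ by the last remark. For $i\in S$ the origin lies on the wrong side of the facet hyperplane $\{\lambda_i=0\}$, and because $B_1(0)\subseteq\conv X$ one has $\inf_{x\in\conv X}\lambda_i(x)\le\lambda_i(0)-1/h_i<0$; so I can choose $z_i\in X$ with $\lambda_i(z_i)$ quantitatively negative. Putting $X'=\{v_0,\dots,v_d\}\cup\{z_i:i\in S\}$ yields $|X'|\le(d+1)+(d-1)=2d$, and it remains to produce an explicit $r(d)\ge d^{-2d}$ with $B_{r(d)}(0)\subseteq\conv X'$. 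For this I would express $0$ as a convex combination of $X'$ in which each offending vertex $v_i$ ($i\in S$) is replaced by a suitable point of the segment $[v_i,z_i]$, and then verify that every point of a small ball about $0$ admits such a representation, tracking how the inradius $1/d$ of $T$ deteriorates across the at most $d-1$ corrections.

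The hardest part will be this final estimate. The new points $z_i$ are only known to satisfy $\lambda_j(z_i)\le1-1/h_j$ for all $j$, so the cross terms created by the corrections must be reabsorbed without collapsing the bound; in particular one has to handle those facets $i\notin S$ for which $0$ lies positively but extremely close to $\{\lambda_i=0\}$, ensuring — by a more careful choice of which points of $X$ are adjoined — that $\conv X'$ still protrudes past those hyperplanes. It is precisely the compounding of up to $d-1$ such corrections, each costing a factor polynomial in $d$, that turns the clean bound $1/d$ into the claimed $d^{-2d}$. Should this bookkeeping become unwieldy, a parallel route is induction on the dimension: orthogonally project $\conv X$ onto a hyperplane $H$ through $0$ perpendicular to a well-chosen point $v\in X$ — the image equals $\conv(\pi(X))$ and still contains the $(d-1)$-dimensional unit ball about $0$ in $H$ — apply the inductive bound to get $2(d-1)$ points in $\pi(X)$, lift them back to $X$, and adjoin $v$ together with one point of $X$ far in the direction $-v$; the difficulty then moves to choosing $v$ so that the lifted points do not all drift to one side of the origin.
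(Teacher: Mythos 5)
Your construction stops just short of the quantitative statement, and the difficulty you yourself flag --- facets $i\notin S$ with $\lambda_i(0)$ positive but tiny --- is not a bookkeeping nuisance but a genuine obstruction to this choice of $X'$. Nothing in the hypotheses prevents the origin from lying at arbitrarily small positive distance from several facet hyperplanes of the maximal simplex $T$. Concretely, in the plane let $T$ be a thin wedge-shaped triangle of huge area: apex $v_0$, apex half-angle $\approx\varepsilon$, the other two vertices at distance $\approx 1/(2\varepsilon)$ along the two long edges, and place $0$ on the axis at distance $\approx 1$ below the apex, so that $\lambda_1(0),\lambda_2(0)\approx\varepsilon$ while $\lambda_0(0)\approx 1-2\varepsilon$. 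Take $X=\{v_0,v_1,v_2\}\cup \partial B_1(0)$. One checks that $B_1(0)\subset\conv X$, that $|\lambda_i(x)|\le 1$ on all of $X$, and that $T$ is the maximal-volume simplex with vertices in $X$ (the unit disk pokes across the two long edges but stays within the slabs $|\lambda_i|\le 1$). Here $S=\{i:\lambda_i(0)\le 0\}=\emptyset$, so your $X'$ is just $V(T)$, and the largest ball about $0$ inside $\conv X'=T$ has radius $\approx\varepsilon$, which can be made arbitrarily small: no $r(d)$ exists for this recipe. Enlarging $S$ to $\{i:\lambda_i(0)\le\delta\}$ does not rescue the count, since in this example all $d$ non-apex coordinates drop below any fixed threshold simultaneously, forcing $(d+1)+d=2d+1$ points; to stay at $2d$ you would have to discard simplex vertices as well, and deciding which ones while still capturing a ball of radius $d^{-2d}$ is exactly the missing quantitative content. (Your observation that the second-largest coordinate is $\ge 1/(dh_0)$ gives no uniform threshold, because the heights $h_i$ are unbounded above; and the fallback induction on dimension is likewise left at the point you identify as its sticking point.)

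The known argument, which this paper adapts to prove its colorful Theorem 2.1, measures ``largeness'' by solid angle at the origin rather than by the volume of an inscribed simplex, and this is precisely what kills the thin-simplex pathology above. One first reduces to $|X|\le (d+1)^2$ by Carath\'eodory, then observes that the simplicial cones spanned by $d$-element subsets of $X$ cover $\R^d$ (Carath\'eodory again, applied with the extra point $0$), so one of the at most $(d+1)^{2d}$ cones has solid angle at least a $1/n$ fraction of the sphere and hence contains a spherical cap of angular radius $\alpha>2\pi/(dn)$ about some unit vector $a$; the $d$ points spanning that cone are kept, and a further application of Carath\'eodory to the antipodal point $-a/d$ yields $d$ more points of $X$ whose hull together with $0$ contains $-a/d$. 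The cone with apex $-a/d$ over the cap then lies in the hull of the $2d$ chosen points and contains a ball about $0$ of radius on the order of $\tan(\alpha)/(2d)$, giving $d^{-2d-2}$ in the paper's bookkeeping and $d^{-2d}$ in B\'ar\'any--Katchalski--Pach's sharper one. If you want to salvage your inscribed-simplex approach, you need an ingredient playing this role --- some device guaranteeing the origin is quantitatively deep inside the hull of boundedly many chosen points --- and that is what the proposal currently lacks.
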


B\'ar\'any et al.~used this theorem  as a key lemma to prove their main quantitative results. We follow the same idea, but instead use the following colorful version of Steinitz' theorem:

\begin{theorem}[Colorful quantitative Steinitz with containment of small balls]\label{theorem-colored-steinitz}
Let $r(d) \ge d^{-2d-2}$ and $X_1, X_2, \ldots, X_{2d}$ be sets in $\R^d$ such that $B_1 (0) \subset \conv(X_i)$ for all $i$.  Then, we can choose $x_1 \in X_1, x_2 \in X_2, \ldots, x_{2d} \in X_{2d}$ so that
\[
B_{r(d)}(0) \subset \conv\{x_1, x_2, \ldots, x_{2d}\}.
\]
\end{theorem}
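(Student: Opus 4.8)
The plan is to reduce to finite color classes, recast the goal via support functions, and then build the rainbow selection by an iterative dimension reduction in the spirit of the B\'ar\'any--Katchalski--Pach argument, losing a controlled factor at each of the $d$ stages.

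First I would reduce to the case where each $X_i$ is finite. Since $B_1(0)$ is compact and $B_1(0)\subset\conv(X_i)$, for every $\delta>0$ there is a finite $Y_i\subset X_i$ with $B_{1-\delta}(0)\subset\conv(Y_i)$; rescaling by $1/(1-\delta)$ and letting $\delta\to 0$ costs only a factor tending to $1$ in the eventual constant, so we may assume the $X_i$ are finite. Next I would use the reformulation that, for a compact set $K$, one has $B_\rho(0)\subset\conv K$ if and only if the support function satisfies $h_{\conv K}(u)=\max_{x\in K}\langle x,u\rangle\ge\rho$ for every unit vector $u$. Hence the task becomes: choose $x_i\in X_i$ so that the closed caps $C_i=\{u\in S^{d-1}:\langle x_i,u\rangle\ge r(d)\}$ cover $S^{d-1}$. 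The hypothesis $B_1(0)\subset\conv(X_i)$ says exactly that for \emph{every} direction $u$ the class $X_i$ contains a point with inner product at least $1$ against $u$; in particular every useful point has norm $\ge 1$, so the cap contributed by a single chosen point is potentially almost a hemisphere. The whole difficulty is to commit to one point per class whose caps, taken together, still cover.

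For the construction I would peel dimensions off one at a time. Maintaining a flag $\R^d=V_0\supset V_1\supset\cdots$ with $\dim V_k=d-k$, at stage $k$ I use that $\pi_{V_{k-1}}(\conv X_j)=\conv(\pi_{V_{k-1}}X_j)\supset\pi_{V_{k-1}}(B_1(0))=B_1(0)\cap V_{k-1}$ for every unused class, which lets me pick from two fresh classes a pair of points whose orthogonal projections onto $V_{k-1}$ are long and nearly antipodal; one of these projections cuts out $V_k\subset V_{k-1}$. After $d$ stages exactly $2d$ classes have been used, we have chosen $2d$ rainbow points and an orthonormal frame $b_1,\dots,b_d$, and in $b$-coordinates each chosen point is close to $\pm$ a positive multiple of a coordinate vector. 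A support-function estimate as above then shows the caps cover $S^{d-1}$, with $r(d)$ of the advertised order: the $d^{-2d-2}$ comes from an $O(d^{-2})$-type loss at each of the $d$ stages plus the $\sqrt d$ overhead of passing from a coordinate frame to general directions.

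The hard part will be controlling the components of a chosen point \emph{off} the current subspace. A point of $X_j$ that is extreme in some direction inside $V_{k-1}$ may be enormous in directions orthogonal to $V_{k-1}$, and such a point shears the convex hull and can make the ball it contributes around the origin arbitrarily small --- a straightforward greedy choice of "any long-shadow point" genuinely fails for this reason. One therefore has to coordinate all $2d$ choices, picking at each stage a point whose direction is as close to $V_{k-1}$ as possible and then using the later stages to cover the thin equatorial bands the earlier stages leave open. That $2d=2\dim$ classes are enough mirrors the fact that $2d$ nearly-hemispherical caps suffice to cover $S^{d-1}$ (as $\pm$ an orthonormal basis shows): what must be proved is that the centers of these caps can always be steered into a robustly positively spanning position, even when some $X_i$ is badly degenerate --- nearly all of its points in a halfspace bounded by a hyperplane through $0$, with the opposite side witnessed only by a single far point. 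Verifying that this steering is always available is, I expect, the crux of the argument, and is exactly why the construction must be iterative rather than one-shot.
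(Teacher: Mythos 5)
Your setup (reduction to finite classes, and the reformulation that it suffices to choose $x_i\in X_i$ whose caps $\{u\in S^{d-1}:\langle x_i,u\rangle\ge r\}$ cover the sphere) is fine, but the core of the argument is missing. The dimension-peeling scheme is only a plan: you never give a selection rule that guarantees, at stage $k$, the existence of two rainbow points whose projections onto $V_{k-1}$ are simultaneously long \emph{and} nearly antipodal (the hypothesis only gives, for each single direction $u\in V_{k-1}$, some point of each class with $\langle \pi_{V_{k-1}}(x),u\rangle\ge 1$; nothing controls the other components of that projection, let alone the components orthogonal to $V_{k-1}$). You correctly identify this as the crux --- a point extreme in a direction of $V_{k-1}$ can be enormous off $V_{k-1}$, so its cap is a near-hemisphere centered in an unhelpful direction, and the final covering argument needs the $2d$ chosen directions to be robustly positively spanning --- but you then defer exactly that step (``verifying that this steering is always available is, I expect, the crux''). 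Without it there is no proof, and the claimed $O(d^{-2})$ loss per stage (hence the final $d^{-2d-2}$) is unsubstantiated.

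The missing idea, and the way the paper sidesteps this difficulty entirely, is the colorful (``very colorful'') Carath\'eodory theorem of Arocha, B\'ar\'any, Bracho, Fabila, and Montejano: if $0$ and a point $p$ lie in every $\conv(X_i)$, $i=1,\dots,d$, one can pick $x_i\in X_i$ with $0\in\conv\{x_1,\dots,x_d,p\}$. With this tool one never needs to control the norms or off-subspace components of individual chosen points. Concretely: first shrink each $X_i$ to at most $(d+1)^2$ points whose hull still contains $B_{1/d}(0)$ (via a simplex inside $B_1(0)$ and repeated Carath\'eodory); then every point of $B_{1/d}(0)$ lies in some colorful simplicial cone spanned by one point from each of $X_1,\dots,X_d$, so these at most $(d+1)^{2d}$ cones cover $\R^d$; by pigeonhole one cone $C_1$ has solid angle at least $w_{d-1}/(d+1)^{2d}$, hence its incenter direction $a$ has angular inradius $\alpha>2\pi/(dn)$; finally, apply colorful Carath\'eodory once more to $X_{d+1},\dots,X_{2d}$ with the point $a'=-\tfrac1d a$ to capture $a'\in\conv\{0,x_{d+1},\dots,x_{2d}\}$, and the cone with apex $a'$ over the $\alpha$-cap of $C_1$ traps a ball of radius $\tan(\alpha)/(2d)>\pi/(nd^2)>d^{-2d-2}$ around the origin inside $\conv\{x_1,\dots,x_{2d}\}$. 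Note also that the cardinality reduction to $(d+1)^2$ points per class is not a mere convenience: it is what makes the pigeonhole bound, and hence the explicit constant, possible. If you want to salvage your iterative approach you would need to supply the coordination mechanism you flagged; as written, the proposal has a genuine gap precisely there.
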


The reason for this result to be called ``colorful'' is that it has the following interpretation.  If every $X_i$ is painted with a different color, the theorem states that \textit{if the convex hull of every monochromatic set contains $B_1(0)$, then there is a colorful set whose convex hull contains $B_{r(d)}(0)$}.  This follows the lines of B\'ar\'any's generalization of Carath\'eodory's theorem \cite{baranys-caratheodory}: \textit{If $V_1,\cdots,V_{d+1}\subseteq{\R}^d$ and $p\in\bigcap_{i=i}^{d+1}\conv (\,V_i)$, then there exist elements $v_i\in V_i$, $1\leq i\leq d+1$, such that $p\in\conv\{v_1,\cdots,v_{d+1}\}$}.  A colorful version of Steinitz' original (non-quantitative) theorem was also noted by Jer\'onimo-Castro but never published \cite{jeronimo-colorful-communication}.

We also obtain a colorful version of Steinitz when we wish to optimize over the volume of $\conv\{x_1, \ldots, x_n\}$.  The constant $n(d,\ep)\sim\left( \frac{cd}{\ep}\right)^{(d-1)/2}$ for some absolute constant $c$ appearing below is related to how efficiently one can approximate convex sets by polytopes with few vertices. Definition \ref{definition-inscribed-volume} gives the explicit value of $n(d,\ep)$; which gives the correct bound for the following result up to a multiplicative factor of $d$.

\begin{theorem}[Colorful quantitative Steinitz with volume]\label{theorem-thrifty-steinitz}	For $d$ a positive integer and $\ep>0$ a constant, take $n=n(d,\ep)$ as in Definition \ref{definition-inscribed-volume}.  Then, the following property holds: If $X_1, X_2, \ldots, X_{nd}$ are sets in $\R^d$ and $K \subset \bigcap_{i=1}^{nd} \conv(X_i)$ is a convex set of volume $1$, we can choose $x_1 \in X_1, x_2 \in X_2, \ldots, x_{nd} \in X_{nd}$ so that
\[
\vol(\conv\{x_1, x_2, \ldots, x_{nd}\})\ge 1-\ep.
\]
Moreover, $n(d,\ep)$ is also a lower bound for the number of sets needed in this theorem.
\end{theorem}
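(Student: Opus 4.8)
The plan is to reduce Theorem~\ref{theorem-thrifty-steinitz} to efficient inner approximation of convex bodies by polytopes with few vertices (the content of Definition~\ref{definition-inscribed-volume}), and then to realize such a polytope ``colorfully'', handling its vertices one at a time via B\'ar\'any's colorful Carath\'eodory theorem.

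\emph{Upper bound.} Put $n=n(d,\ep)$. Since $\vol K=1$, Definition~\ref{definition-inscribed-volume} supplies a polytope $P=\conv\{p_1,\dots,p_n\}\subseteq K$ with at most $n$ vertices and $\vol P\ge(1-\ep)\vol K=1-\ep$. It therefore suffices to choose a colorful $x_1\in X_1,\dots,x_{nd}\in X_{nd}$ with $\{p_1,\dots,p_n\}\subseteq\conv\{x_1,\dots,x_{nd}\}$, as this gives $\vol\conv\{x_1,\dots,x_{nd}\}\ge\vol P\ge 1-\ep$. Split the $nd$ classes into $n$ consecutive blocks and attach $p_j$ to the $j$-th block. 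The hypothesis says $p_j\in K\subseteq\conv(X_i)$ for \emph{every} $i$, so $p_j$ lies in the convex hull of each class of its block; colorful Carath\'eodory then produces colorful representatives of that block whose convex hull already contains $p_j$. Taking all the chosen points (and arbitrary points from any leftover class) yields a colorful set whose convex hull contains every $p_j$, hence contains $P$. A block of size $d+1$ per vertex already gives the right order of magnitude, $n(d+1)$ classes; to land exactly on $nd$ one lets consecutive blocks share a vertex, using that once one vertex $v$ of the simplex for $p_j$ is fixed, the task reduces to meeting the ray through $p_j$ in direction $p_j-v$ by a colorful simplex on only $d$ further classes -- a line-version of colorful Carath\'eodory, valid since $p_j$ itself lies on that ray and in the convex hull of each of those $d$ classes.

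\emph{Lower bound.} By the definition of $n=n(d,\ep)$ there is a convex body $K^\ast$ of volume $1$ with no inscribed polytope on fewer than $n$ vertices of volume $\ge 1-\ep$. Given $m<n$, let $X_1=\dots=X_m$ each equal the set of extreme points of $K^\ast$, so $\conv(X_i)=K^\ast$ for all $i$ and $\vol K^\ast=1$. Any $x_1\in X_1,\dots,x_m\in X_m$ span a polytope inscribed in $K^\ast$ with at most $m<n$ vertices, hence of volume $<1-\ep$. So $n$ sets are necessary.

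\emph{Main difficulty.} The polytope-approximation step is a black box; the real work is the colorful step. The point is that each $X_i$ may have convex hull vastly larger than $K$ -- e.g.\ the vertex set of a huge simplex containing $K$ -- so there may be no point of $X_i$ anywhere near a vertex of $P$, and one must genuinely use the colorful convex-combination structure rather than proximity. Making the block sharing work so that the count is exactly $nd$, rather than $n(d+1)$, is the part that needs the line-version of colorful Carath\'eodory and is the most delicate step.
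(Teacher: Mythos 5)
Your overall strategy (inner approximation of $K$ by a polytope $P$ with $n(d,\ep)$ vertices as in Definition \ref{definition-inscribed-volume}, then capturing each vertex of $P$ by a colorful selection, plus the extremal construction for the lower bound, which is essentially the paper's) matches the paper's proof in outline, but the step you yourself flag as the delicate one is exactly where there is a genuine gap. With blocks of only $d$ classes per vertex, plain colorful Carath\'eodory does not apply, and the ``line-version'' you invoke is neither sufficient nor available. First, meeting the \emph{line} through $v$ and $p_j$ is not enough: you need the colorful $(d-1)$-simplex to meet the \emph{ray} emanating from $p_j$ away from $v$, or else $p_j \in \conv\{v,x_1,\dots,x_d\}$ does not follow. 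Second, the ray statement you need is not the standard colorful Carath\'eodory theorem, and it is also not the very colorful Carath\'eodory theorem of Arocha, B\'ar\'any, Bracho, Fabila and Montejano used in the paper: that theorem requires \emph{both} anchor points to lie in the convex hull of every class, whereas your shared vertex $v$ is just a previously chosen point from some other class and need not lie in $\conv(X_i)$ for the $d$ fresh classes. So the key lemma is left unproved, and it is genuinely stronger than anything you can cite. There is also an accounting problem: with consecutive sharing the first vertex still needs $d+1$ classes, so the scheme consumes $(d+1)+(n-1)d = nd+1$ classes; making the sharing cyclic would give $nd$ but creates a circular dependency in the selection.

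The paper resolves precisely this point by a different device: it anchors \emph{every} block at one common point, namely a point $0$ chosen in the interior of $P$. Since $0 \in P \subseteq K \subseteq \conv(X_i)$ for all $i$, the Arocha et al.\ theorem legitimately applies to each block of $d$ classes and yields $y_j \in \conv\{0, x_{(j-1)d+1},\dots,x_{jd}\}$. The anchor $0$ is not itself a chosen colorful point, but a separating-hyperplane argument shows a posteriori that $0 \in \conv\{x_1,\dots,x_{nd}\}$: if a hyperplane through $0$ had all the $x_i$ in one closed halfspace, then, $0$ being interior to $P$, some vertex $y_j$ would lie strictly in the opposite open halfspace, contradicting $y_j \in \conv\{0,x_{(j-1)d+1},\dots,x_{jd}\}$. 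Hence $P \subseteq \conv\{x_1,\dots,x_{nd}\}$ with exactly $nd$ classes and volume at least $1-\ep$. To salvage your sharing scheme you would have to prove the ray-version colorful Carath\'eodory with an anchor outside the class hulls (and fix the $nd+1$ count); as written, the proof is incomplete at its central step.
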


For the applications of colorful Steinitz theorems, we need to optimize over slightly different parameters than the volume. These variations are Theorem \ref{theroem-steinitz-banach-mazur} and Proposition \ref{corollary-steinitz-for-balls} in Section \ref{section-caratheodory}; they follow the same scheme as the theorem above. Each is based on a constant related to different types of approximations of convex sets by polytopes. The continuous quantitative versions of Steinitz' theorem are at the core of our proofs for continuous quantitative versions of Helly's and Tverberg's theorems.

We next consider a discrete quantitative analogue of Carath\'eodory's theorem. How to generalize the theorem depends on whether we aim to quantify the size of the set contained in the convex hull in a discrete way or whether we want to force the input parameters to be integral or otherwise discrete. We consider the former type of generalization, for which we obtain the following result using standard methods.

\begin{theorem}[Colorful discrete quantitative Carath\'eodory]\label{theorem-quantitative-discrete-caratheodory}
	Let $K$ be a subset of $n \ge 2$ points in $\R^d$, and $\operatorname{ex}(K)$ be the number of extreme points of $K$.  
	If $n=\operatorname{ex}(K)$ and $X_1, X_2, \ldots, X_{nd}$ are sets whose convex hulls contain $K$, then we can find $x_1 \in X_1, \ldots, x_{nd} \in X_{nd}$ such that
	\[
	K \subset \conv\{x_1, \ldots, x_{nd}\}.
	\]
	Moreover, the number of sets is optimal for the conclusion to hold.
\end{theorem}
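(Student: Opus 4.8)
The plan is to handle the two assertions — existence of a covering transversal and optimality of the number $nd$ — separately.

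\emph{Existence of the transversal.} Since $n=\operatorname{ex}(K)$, the set $K=\{k_1,\dots,k_n\}$ is precisely the vertex set of the polytope $P:=\conv K$, and for any finite family $\{x_1,\dots,x_m\}$ one has $K\subseteq\conv\{x_1,\dots,x_m\}$ iff $k_j\in\conv\{x_1,\dots,x_m\}$ for every $j$; so it suffices to pick the transversal so that every vertex of $P$ is captured. The engine is B\'ar\'any's colorful Carath\'eodory theorem, sharpened to an \emph{anchored} form: if $C\subseteq\R^d$ is finite and nonempty, $V_1,\dots,V_d\subseteq\R^d$, and $p\in\conv(V_i)$ for all $i$, then there are $v_i\in V_i$ with $p\in\conv(C\cup\{v_1,\dots,v_d\})$ — i.e., once an anchor is available, capturing one more point costs only $d$ new color classes rather than $d+1$. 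I would prove this by separating $p$ from $C$ (if $p\in\conv C$ there is nothing to do), taking $u$ to be the point of $\conv C$ nearest to $p$, projecting $\R^d$ along $u-p$ onto $\R^{d-1}$, running ordinary colorful Carath\'eodory on the $d$ projected classes there, and lifting: the lifted colorful simplex meets the line through $p$ and $u$ in a single point, and when that point lies on the far side of $p$ from $u$ the anchor, which contains $u$, closes the convex hull around $p$; the freedom in the colorful choice and in the lift is used to land on the correct side. Granting this, induct on $n$: for the step, remove a vertex $k_n$, apply the induction hypothesis to $P':=\conv\{k_1,\dots,k_{n-1}\}$ with $(n-1)d$ of the sets (their hulls contain $P\supseteq P'$, and $k_1,\dots,k_{n-1}$ are exactly the vertices of $P'$) to obtain chosen points with hull $C$, then invoke the anchored refinement with anchor $C$ and the remaining $d$ sets to capture $k_n$, for a total of $(n-1)d+d=nd$. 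The base case $n=2$ does not reduce and I would treat it directly: translate the midpoint of $k_1k_2$ to the origin, project along the line $\ell$ through $k_1,k_2$ so that both points map to a single point of $\R^{d-1}$ lying in all $2d$ projected hulls, split the color classes into two groups of $d$, run colorful Carath\'eodory in $\R^{d-1}$ on each group, and choose the lifts so that the two resulting points of $\ell$ straddle the segment $k_1k_2$; convexity of the chosen hull then traps both endpoints.

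The main obstacle is exactly this accounting that forces the bound to be $nd$ rather than the naive $n(d+1)$: one must verify that the anchored refinement genuinely gets by with $d$ fresh classes — the sign of the residual one-dimensional slack in the lifting step is the subtle point — and that the two-point base case really does run with $2d$ classes, which is the same difficulty in disguise. The reduction to vertices and the induction itself are routine.

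\emph{Optimality.} Let $K=\{k_1,\dots,k_n\}$ be the vertex set of any polytope $P\subseteq\R^d$ with $n$ vertices, so each $k_j$ has a supporting hyperplane $H_j$ meeting $P$ only at $k_j$. Inside each $H_j$ choose a tiny $(d-1)$-simplex $\Sigma_j$ with vertices $v_j^1,\dots,v_j^d$ within $\ep$ of $k_j$ and with $k_j$ in its relative interior, and set $Q:=\conv\{v_j^i:1\le j\le n,\ 1\le i\le d\}$. For $\ep$ small, $Q$ has exactly $nd$ vertices (the $v_j^i$), $P\subseteq Q$, and \emph{every} vertex of $Q$ is indispensable for $P\subseteq Q$: for fixed $j$ all generators other than $v_j^1,\dots,v_j^d$ lie strictly below $H_j$ once $\ep$ is small, so $\conv(V(Q)\setminus\{v_j^i\})$ meets $H_j$ exactly in the facet of $\Sigma_j$ opposite $v_j^i$, which does not contain $k_j$; hence $k_j\notin\conv(V(Q)\setminus\{v_j^i\})$. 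Now put $X_1=\dots=X_{nd-1}=V(Q)$: their convex hulls all equal $Q\supseteq K$, yet any selection $x_i\in X_i$ uses at most $nd-1$ of the $nd$ vertices of $Q$, hence omits some vertex $v$, and then $\conv\{x_1,\dots,x_{nd-1}\}\subseteq\conv(V(Q)\setminus\{v\})$ fails to contain $K$. Thus $nd-1$ sets never suffice, so the bound $nd$ is best possible; the only care needed here is the routine ``$\ep$ small enough''.
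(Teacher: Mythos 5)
Your optimality argument is fine --- it is essentially the paper's own construction made explicit (a polytope with each $k_j$ in the relative interior of a small simplicial facet, and $nd-1$ copies of its vertex set), and the ``omit one vertex of $Q$'' count is airtight. The existence half, however, rests entirely on your \emph{anchored} colorful Carath\'eodory lemma, in which the anchor $C$ is arbitrary and only the target $p$ is assumed to lie in each $\conv(V_i)$, and the proof you sketch for it does not establish it. Projecting along $u-p$ and applying ordinary colorful Carath\'eodory in $\R^{d-1}$ only guarantees that the lifted colorful simplex meets the \emph{line} through $p$ and $u$; whether it meets the ray on the far side of $p$ from $u$ is precisely the content of the lemma, and nothing in your argument controls that sign. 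Once the colorful points are chosen, the intersection with the line is determined --- there is no ``freedom in the lift'' --- and colorful Carath\'eodory may admit a unique choice, which can land on the $u$-side. You flag this yourself as the subtle point but never resolve it, and the $n=2$ base case (``choose the lifts so that the two points straddle the segment'') has the identical unresolved defect. Since this is exactly the step that brings the count down from $n(d+1)$ to $nd$, it is a genuine gap. For the record, the lemma you want is true: it is the known strengthening of colorful Carath\'eodory in which one of the $d+1$ color classes may be an arbitrary singleton (a special case of the Holmsen--Pach--Tverberg ``points surrounding the origin'' theorem, also due to Arocha, B\'ar\'any, Bracho, Fabila, and Montejano), but it requires that theorem or a genuinely different argument, not the projection sketch.

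Two remarks on how the architecture can be saved. In your induction step the full-strength lemma is not needed: by the inductive hypothesis $\conv(C)$ already contains $k_1$, and $k_1$ lies in every remaining $\conv(X_i)$, so the very colorful Carath\'eodory theorem exactly as quoted in the paper (both the target $k_n$ and the auxiliary point $k_1$ in all the hulls) gives $k_n\in\conv\{x_{(n-1)d+1},\dots,x_{nd},k_1\}\subseteq\conv\bigl(C\cup\{x_{(n-1)d+1},\dots,x_{nd}\}\bigr)$. The base case can be bootstrapped similarly: capture $k_1$ with the first $d$ classes using $k_2$ as auxiliary point, capture $k_2$ with the next $d$ using $k_1$, and eliminate $k_1,k_2$ from the two convex combinations (the coefficients of $a\in\conv\{x_1,\dots,x_d\}$ and $b\in\conv\{x_{d+1},\dots,x_{2d}\}$ are positive because $k_1\neq k_2$) to conclude $k_1,k_2\in\conv\{a,b\}$. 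The paper's own proof avoids the induction altogether: it anchors every vertex at a single point $0$ in the relative interior of $\conv K$ (which lies in all the $\conv(X_i)$, so the quoted lemma applies), spending $d$ classes per vertex, and then a separating-hyperplane argument shows that $0$ --- and hence every vertex --- automatically lies in the hull of the $nd$ chosen points, with no extra classes spent on the anchor.
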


We believe this result may already be known, but we have not found references to it.  A proof is contained in Section \ref{section-caratheodory}. We will make use of this result in our proof of Theorem \ref{thm:quantitative-disc-tverberg}.

\subsection*{Helly-type contributions}

Helly's theorem and its numerous extensions are of central  importance in discrete and computational geometry (see
\cite{DGKsurvey63,Eckhoffsurvey93,Wen1997}). Helly himself understood immediately that his theorem had many variations, and was, for instance, the first to prove a topological version of his own theorem \cite{hellyagain}.  A \emph{Helly-type} property $P$ is a property for which there is a number $\mu$ such that the following statement holds.  \textit{If $\ff$ is a finite family of objects such that every subfamily with $\mu$ elements satisfies $P$, then $\ff$ satisfies $P$}.  A vague way to summarize some of the results below is that ``\textit{the intersection has a large volume}'' is a Helly-type property for convex sets.

To our knowledge, the first family of quantitative Helly-type theorems was made explicit by B\'ar\'any, Katchalski, and Pach in  \cite{baranykatchalskipach}. They obtained extensions of the classic Helly and Steinitz theorems for convex sets with a volumetric constraint.


\begin{theoremp}[B\'ar\'any, Katchalski, Pach, 1982 \cite{baranykatchalskipach}]
Let $\ff$ be a finite family of convex sets such that for any subfamily $\ff'$ of at most $2d$ sets,
\[
\vol\left( \cap \ff'\right) \ge 1 .
\]
Then,
\[
\vol\left( \cap \ff \right) \ge d^{-2d^2}.
\]
\end{theoremp}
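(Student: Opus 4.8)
The plan is to deduce this quantitative Helly bound from the quantitative Steinitz theorem of B\'ar\'any, Katchalski, and Pach recalled above, via polar duality --- in keeping with the paper's principle that Carath\'eodory/Steinitz-type results yield Helly-type ones. Set $K := \bigcap\ff$ and $\kappa_d := \vol(B_1(0))$. First I would check that $K$ is full-dimensional: for any $\ff'\subseteq\ff$ with $|\ff'|\le d+1$, $\vol(\bigcap\ff')\ge 1$ forces $\inter(\bigcap\ff')\ne\emptyset$, and since $\bigcap\ff'\subseteq F$ for each $F\in\ff'$ this gives $\bigcap_{F\in\ff'}\inter(F)\ne\emptyset$; Helly's theorem applied to the finite family $\{\inter(F):F\in\ff\}$ of open convex sets then yields $\inter(K)\ne\emptyset$. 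This also lets us replace each $F$ by $\overline F$ without affecting $\overline K$ or any of the relevant volumes, so assume the members of $\ff$ are closed. If $K$ is unbounded, $\vol(K)=\infty$ and we are done; otherwise $K$ is a convex body, and taking its John ellipsoid $E$ and an affine bijection $T$ with $T(E)=B_1(0)$, John's theorem gives $B_1(0)\subseteq T(K)\subseteq B_d(0)$. Writing $\lambda := |\det T|$, we get $\vol(K)=\lambda^{-1}\vol(T(K))\ge\lambda^{-1}\kappa_d$, while $\vol\big(\bigcap_{F\in\ff'}T(F)\big)=\lambda\,\vol(\bigcap\ff')\ge\lambda$ for every $\ff'$ of size $\le 2d$. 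So it suffices to exhibit at most $2d$ members of $\ff$ whose transformed intersection lies in a ball of dimensionally-controlled radius.

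\textbf{Step 2: polarize and apply Steinitz.} Since $T(K)\subseteq B_d(0)$, the polar satisfies $B_{1/d}(0)\subseteq (T(K))^\circ$. Each $T(F)\supseteq T(K)\supseteq B_1(0)$, so every $(T(F))^\circ$ is compact and contained in $B_1(0)$, and as $\ff$ is finite the polarity identity for intersections gives $(T(K))^\circ=\big(\bigcap_{F\in\ff}T(F)\big)^\circ=\conv\big(\bigcup_{F\in\ff}(T(F))^\circ\big)$. Hence $B_{1/d}(0)\subseteq\conv\big(\bigcup_{F\in\ff}(T(F))^\circ\big)$. Now apply the quantitative Steinitz theorem to the set of points $\bigcup_{F\in\ff}(T(F))^\circ$ (using scale-invariance to start from $B_{1/d}(0)$ instead of $B_1(0)$): it produces at most $2d$ of these points whose convex hull contains $B_{r(d)/d}(0)$ with $r(d)\ge d^{-2d}$. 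Each chosen point lies in some $(T(F))^\circ$, so at most $2d$ members $F_1,\dots,F_{2d}$ are involved, and their hull lies inside $\conv\big(\bigcup_{i=1}^{2d}(T(F_i))^\circ\big)=\big(\bigcap_{i=1}^{2d}T(F_i)\big)^\circ$. Taking polars, $\bigcap_{i=1}^{2d}T(F_i)\subseteq B_{d/r(d)}(0)$.

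\textbf{Step 3: conclude.} Combining with Step 1, $\lambda \le \lambda\,\vol\big(\bigcap_{i=1}^{2d}F_i\big)=\vol\big(\bigcap_{i=1}^{2d}T(F_i)\big)\le\kappa_d\,(d/r(d))^d$, whence $\vol(K)\ge\lambda^{-1}\kappa_d\ge (r(d)/d)^d\ge d^{-2d^2-d}$, a bound of the shape $d^{-\Theta(d^2)}$. I expect the main obstacle to be purely quantitative: the duality identities and the John-ellipsoid normalization are routine, but shaving the constant to exactly $d^{-2d^2}$ (rather than the slightly weaker $d^{-2d^2-d}$ obtained naively) requires careful tracking of the dilation factors and, where helpful, the sharper roundness/Steinitz constants available in this setting; I would also re-verify that the degenerate reductions (unbounded or lower-dimensional $K$) are genuinely harmless.
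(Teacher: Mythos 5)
The paper does not actually prove this statement --- it is quoted as background from B\'ar\'any--Katchalski--Pach, with only the remark that they derived their quantitative Helly results from their quantitative Steinitz lemma. Your proposal reconstructs exactly that historical route (and it also parallels the scheme the paper uses for its own Theorem \ref{theorem-helly-for-volume}: reduce to a full-dimensional bounded intersection, pass to polars, and run a Steinitz/Carath\'eodory-type selection). Structurally your argument is sound: the Helly argument for $\inter(\bigcap\ff)\neq\emptyset$, the harmless passage to closures, the John normalization $B_1(0)\subseteq T(K)\subseteq B_d(0)$, the identity $\bigl(\bigcap_{F}T(F)\bigr)^\circ=\conv\bigl(\bigcup_F (T(F))^\circ\bigr)$ for a finite family of closed convex sets containing the origin (the polars being compact, no closure is needed), the rescaled Steinitz selection of at most $2d$ points, and the bipolar step giving $\bigcap_{i\le 2d}T(F_i)\subseteq B_{d/r(d)}(0)$ are all correct, as is the volume bookkeeping via $\lambda=|\det T|$.

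The one genuine shortfall is the constant, and it is not merely a matter of ``careful tracking'' as your closing paragraph suggests. Within your scheme the final bound is $\bigl(r(d)/d\bigr)^d$: the factor $1/d$ is forced by the John containment $T(K)\subseteq B_d(0)$ (equivalently, you can only guarantee $B_{1/d}(0)$ inside the polar before invoking Steinitz), so with $r(d)\ge d^{-2d}$ taken at face value you get $d^{-2d^2-d}$, strictly weaker than the stated $d^{-2d^2}$, and no rearrangement of the same ingredients closes that gap. Recovering the stated constant requires either slack in the Steinitz radius (i.e.\ a bound of the form $r(d)\ge d^{-2d+1}$, which is what the original B\'ar\'any--Katchalski--Pach bookkeeping effectively provides) or a normalization that avoids paying the John factor $d$ before polarizing. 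So you have a correct proof of the theorem with a marginally weaker constant $d^{-2d^2-d}$; to claim the statement as written you would need to import or re-derive the sharper Steinitz constant rather than the $d^{-2d}$ you quote.
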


This has recently been improved by Nasz\'odi to conclude $\vol (\cap \ff) \ge d^{-cd}$ for some absolute constant $c$ \cite{nazo15}.  The size of the subfamilies one must check cannot be improved over $2d$, as is noted in \cite{baranykatchalskipach}. In order to see this, let $\ff$ be the family of $2d$ halfspaces defining the facets of an arbitrarily small hypercube.  Any $2d-1$ define an unbounded polyhedron with non-empty interior, showing the optimality of their result.

In Section \ref{section-helly}, we show that it is possible to obtain better approximations of the volume of the intersection, namely $\vol\left( \cap \ff \right) \ge 1-\ep$, if one is willing to check for subfamilies $\ff'$ of larger size. This answers a question raised by Kalai and Linial during an Oberwolfach meeting in February 2015. The quantity $n^*(d, \ep)$ is defined properly in Definition \ref{definition-for-helly}; its asymptotic growth is similar to that of $n(d,\ep)$:

\begin{theorem}[Continuous quantitative Helly with volume]
\label{theorem-helly-for-volume}
Let $n=n^*(d, \ep)$ as in Definition \ref{definition-for-helly}.  Let $\ff$ be a finite family of convex sets such that for any subfamily $\ff'$ of at most $nd$ sets, 
\[
\vol\left( \cap \ff'\right) \ge 1 .
\]
Then,
\[
\vol\left( \cap \ff \right) \ge (1+\ep)^{-1}.
\]
Moreover, $n^*(d,\ep)$ is a lower bound for the size of the subfamilies $\ff'$ that we need to check.
\end{theorem}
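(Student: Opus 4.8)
The plan is to derive the theorem from the colorful quantitative Steinitz theorem for Banach--Mazur approximation (Theorem~\ref{theroem-steinitz-banach-mazur}) applied to the polar dual; the quantity $n^{*}(d,\ep)$ will be that Steinitz constant evaluated at a rescaled accuracy, and Definition~\ref{definition-for-helly} records its exact value. It suffices to prove: for every finite family $\ff$ of convex sets with $K:=\bigcap\ff$, there is $\ff'\subseteq\ff$ with $|\ff'|\le n^{*}(d,\ep)\,d$ and $\vol(\bigcap\ff')\le(1+\ep)\vol(K)$, since combined with the hypothesis $\vol(\bigcap\ff')\ge1$ this gives $\vol(K)\ge(1+\ep)^{-1}$. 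Replacing each $C\in\ff$ by the family of its supporting halfspaces does not change $K$, and any $n^{*}(d,\ep)\,d$ of these halfspaces lie in the supporting families of at most $n^{*}(d,\ep)\,d$ of the original sets, hence still meet in volume $\ge1$; a standard continuity argument over finite subfamilies (and the trivial case $\vol(K)=\infty$) reduces us to $\ff$ a finite family of halfspaces. Since $n^{*}(d,\ep)\,d\ge2d$, the B\'ar\'any--Katchalski--Pach theorem gives $\vol(K)>0$, so $K$ is full-dimensional; a full-dimensional polyhedron of finite volume is bounded, so $K=:P$ is a polytope with nonempty interior. Discarding redundant halfspaces only shrinks $\ff$, so we may assume each halfspace supports a facet of $P$; if $P$ has at most $n^{*}(d,\ep)\,d$ facets we are done with $\ff'=\ff$, so assume it has more.

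After choosing an interior point of $P$ as origin and applying a volume-ratio-preserving linear map, we may assume $P^{\circ}$ is well rounded, $B_{\rho}\subseteq P^{\circ}\subseteq B_{R}$ with $R/\rho$ bounded in terms of $d$ alone. Writing $H_{i}=\{x:\langle v_{i},x\rangle\le1\}$, we have $P^{\circ}=\conv\{v_{1},\dots,v_{m}\}$, each $v_{i}$ is a vertex of $P^{\circ}$ because $H_{i}$ is a non-redundant facet, and $\bigcap_{i\in I}H_{i}=(\conv\{v_{i}:i\in I\})^{\circ}$ for every $I\subseteq[m]$. Thus choosing a subfamily of $\ff$ amounts to choosing a subset of the vertex set of $P^{\circ}$, and the task becomes: find few of those vertices whose convex hull approximates $P^{\circ}$ sufficiently well from inside.

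Fix $\ep'$ with $(1+\ep')^{d}=1+\ep$ and set $n^{*}(d,\ep):=n'(d,\ep')$, the Steinitz constant of Theorem~\ref{theroem-steinitz-banach-mazur}. Applying the monochromatic case of that theorem (all input sets equal to $\{v_{1},\dots,v_{m}\}$, target body $P^{\circ}$) yields $I\subseteq[m]$ with $|I|\le n'(d,\ep')\,d=n^{*}(d,\ep)\,d$ such that, with $Q^{\circ}:=\conv\{v_{i}:i\in I\}$, one has $Q^{\circ}\subseteq P^{\circ}\subseteq(1+\ep')Q^{\circ}$, the dilation being about the center placed at the origin by the normalization above. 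In particular $Q^{\circ}$ contains a ball around the origin, so $Q:=\bigcap_{i\in I}H_{i}=(Q^{\circ})^{\circ}$ is bounded; polarizing the two inclusions gives $P\subseteq Q\subseteq(1+\ep')P$, hence $\vol(Q)\le(1+\ep')^{d}\vol(P)=(1+\ep)\vol(P)$. Since $|I|\le n^{*}(d,\ep)\,d$, the hypothesis gives $\vol(Q)\ge1$, whence $\vol(\bigcap\ff)=\vol(P)\ge(1+\ep)^{-1}$.

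For optimality, I would take a polytope $P$ that is a sufficiently fine approximation of a Euclidean ball --- fine enough that no intersection of fewer than $n^{*}(d,\ep)\,d$ of its facet-halfspaces circumscribes $P$ within volume factor $1+\ep$, which is exactly the lower bound for circumscribed polytope approximation underlying Definition~\ref{definition-for-helly} --- rescaled so that $\vol(P)$ is just below $(1+\ep)^{-1}$, and let $\ff$ be its facet-halfspaces; then every subfamily of size smaller than $n^{*}(d,\ep)\,d$ meets either in an unbounded set or in a set of volume exceeding $(1+\ep)\vol(P)>1$, while $\vol(\bigcap\ff)<(1+\ep)^{-1}$. The crux of the argument is the interface with the Steinitz theorem through polarity: one must convert accuracies correctly (a multiplicative volume gap of $1+\ep$ corresponds to a linear scaling by $(1+\ep)^{1/d}$, forcing $\ep'\approx\ep/d$ and hence the dimension-dependence of $n^{*}$), and --- more delicately --- one must guarantee that passing to few vertices of $P^{\circ}$ keeps the origin interior to their hull, so that the selected subfamily still has bounded intersection; this is precisely why a homothetic (Banach--Mazur) inner approximation of $P^{\circ}$ is needed rather than a mere volume approximation. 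The halfspace reduction and the exclusion of degenerate intersections are routine given the classical Helly and B\'ar\'any--Katchalski--Pach theorems.
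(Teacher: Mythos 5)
Your reduction to a finite family of halfspaces with $P=\cap\ff$ a full-dimensional polytope is fine and matches the paper, but the core of your argument has two genuine problems. First, you quietly change the constant: the theorem fixes $n=n^*(d,\ep)$ as in Definition \ref{definition-for-helly} (circumscribed polytopes with few \emph{facets} approximating the volume within factor $1+\ep$), whereas you redefine it as the Banach--Mazur Steinitz constant $n^{\operatorname{bm}}(d,\ep')$ of Theorem \ref{theroem-steinitz-banach-mazur} at the rescaled accuracy $(1+\ep')^d=1+\ep$, i.e.\ $\ep'\approx\ep/d$. That proves a statement with a different (and dimension-degraded) constant, and it no longer matches the ``moreover'' lower bound, which concerns the Definition \ref{definition-for-helly} quantity. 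The paper's proof needs no rescaling and no Banach--Mazur machinery: it takes a circumscribed polytope $K\supset P$ with $n^*(d,\ep)$ facets and $\vol(K)\le(1+\ep)\vol(P)$ directly from Definition \ref{definition-for-helly}, polarizes, and applies the discrete quantitative Carath\'eodory theorem (Theorem \ref{theorem-quantitative-discrete-caratheodory}) with all color classes equal to $\ff^*$ to capture the $n$ vertices of $K^*$ inside the hull of at most $nd$ points of $\ff^*$, giving $P\subset\cap\ff'\subset K$. In particular your closing claim that a homothetic inner approximation is ``needed'' rather than a mere volume approximation is incorrect; the Carath\'eodory selection already forces $K^*\subset\conv[(\ff')^*]$, and boundedness of $\cap\ff'$ is automatic from $\cap\ff'\subset K$.

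Second, even on its own terms your key step misapplies Theorem \ref{theroem-steinitz-banach-mazur}: that theorem (and Definition \ref{banach-mazur-value}) is stated for \emph{centrally symmetric} bodies, and $P^{\circ}$ is in general not centrally symmetric; moreover its conclusion sandwiches a \emph{linear image} $\lambda(K)$ between $P$ and $(1+\ep)P$, not $K$ itself between $Q^{\circ}$ and a dilate of $Q^{\circ}$ about the origin, so the chain $Q^{\circ}\subseteq P^{\circ}\subseteq(1+\ep')Q^{\circ}$ (with dilation centered at $0$) that you need for the polarity step $P\subseteq Q\subseteq(1+\ep')P$ does not follow; a ``well-rounded'' normalization does not repair either point. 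Finally, in the optimality part you assert that fewer than $n^*(d,\ep)\,d$ facet-halfspaces cannot circumscribe within factor $1+\ep$; the inapproximability underlying Definition \ref{definition-for-helly} only gives this for fewer than $n^*(d,\ep)$ halfspaces, which is also all the theorem claims.
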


We also present a quantitative version with diameter guarantees. The constant $n^{\operatorname{diam}}(d, \ep)$ is explained in Proposition \ref{definition-diameter}.  This comes from approximating convex sets with polytopes of few facets and bounded diameter.  In that proposition we show that the number of facets needed for efficient approximations can be bounded only in terms of the dimension and a the relative error on the diameter.  It is known that in order to approximate the unit sphere within distance $\ep$ in the  Hausdorff metric with a polytope, we require $\Omega(\ep^{-(d-1)/2})$ facets \cite{bronstein2008approximation}.  Thus $n^{\operatorname{diam}}(d, \ep)= \Omega(\ep^{-(d-1)/2})$.

\begin{theorem}[Continuous quantitative Helly with diameter]
\label{theorem-helly-for-diameter}
	Let $n=n^{\operatorname{diam}}(d, \ep)$ as in Propositionv \ref{definition-diameter}.  Let $\ff$ be a finite family of convex sets such that for any subfamily $\ff'$ of at most $nd$ sets, 
\[
\operatorname{diam}\left( \cap \ff'\right) \ge 1	.
\]
Then,
\[
\operatorname{diam}\left( \cap \ff \right) \ge (1+\ep)^{-1}.
\]
Moreover, $n$ is a lower bound for the size of the subfamilies $\ff'$ that we need to check.
\end{theorem}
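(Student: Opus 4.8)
The plan is to argue the contrapositive and reduce everything to the polytopal approximation packaged in Proposition~\ref{definition-diameter}, in complete parallel with the volume case (Theorem~\ref{theorem-helly-for-volume}). Assume $\operatorname{diam}(\cap\ff)<(1+\ep)^{-1}$; I will exhibit a subfamily $\ff''\subseteq\ff$ with $|\ff''|\le nd$ and $\operatorname{diam}(\cap\ff'')<1$, contradicting the hypothesis. If $|\ff|\le nd$ this is immediate, so assume $|\ff|>nd$. We may also assume $n=n^{\operatorname{diam}}(d,\ep)\ge d+1$: a set bounded by at most $d$ halfspaces is never bounded, and $n^{\operatorname{diam}}=\Omega(\ep^{-(d-1)/2})$ anyway, so in the range of interest this is automatic (and otherwise there is nothing to prove). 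Finally, replacing $K:=\cap\ff$ by its closure $\overline K$ changes neither the diameter nor any of the containments below, and if $\overline K$ is lower-dimensional we inflate it by an arbitrarily small ball, which costs nothing since $\operatorname{diam}(\overline K)$ is strictly below $(1+\ep)^{-1}$.

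Next I would invoke Proposition~\ref{definition-diameter} on the convex body $\overline K$: it produces a polytope $Q=\bigcap_{j=1}^{m}H_j$, written as an intersection of $m\le n$ closed halfspaces $H_j$, with $K\subseteq \overline K\subseteq Q$ and $\operatorname{diam}(Q)\le (1+\ep)\operatorname{diam}(\overline K)<(1+\ep)(1+\ep)^{-1}=1$. This is the only place where the continuous quantitative approximation enters, and it is where the real work of the argument sits.

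The core step is to trade each halfspace $H_j$ for $d$ members of $\ff$. Since $K=\cap\ff\subseteq Q\subseteq H_j$, the open complementary halfspace $H_j^{c}$ is disjoint from $\cap\ff$, so the \emph{finite} family $\{H_j^{c}\}\cup\ff$ of convex sets has empty intersection. Helly's theorem — which, by Radon's purely combinatorial argument, holds for finite families of arbitrary convex sets, not necessarily closed — yields a subfamily of cardinality $d+1$ with empty intersection. It must contain $H_j^{c}$: otherwise $d+1\le nd$ members of $\ff$ would have empty intersection, contradicting the hypothesis. Hence there is $\ff_j\subseteq\ff$ with $|\ff_j|\le d$ and $\cap\ff_j\subseteq H_j$. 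Put $\ff''=\bigcup_{j=1}^{m}\ff_j$; then $|\ff''|\le md\le nd$ and $\cap\ff''=\bigcap_{j=1}^{m}\bigl(\cap\ff_j\bigr)\subseteq\bigcap_{j=1}^{m}H_j=Q$, so $\operatorname{diam}(\cap\ff'')\le\operatorname{diam}(Q)<1$. This contradicts the hypothesis applied to $\ff''$, proving the main statement.

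For the optimality claim I would take $\ff$ to be a finite family of supporting halfspaces of a Euclidean ball $B$ whose diameter is chosen slightly below $(1+\ep)^{-1}$, dense enough that $\operatorname{diam}(\cap\ff)<(1+\ep)^{-1}$, so the conclusion fails. Any subfamily of fewer than $n=n^{\operatorname{diam}}(d,\ep)$ of these halfspaces has intersection a polyhedron $P$ with $P\supseteq B$ and fewer than $n$ facets; by the lower bounds on polytopal approximation recalled before the statement (the $\Omega(\ep^{-(d-1)/2})$ bound for approximating a sphere, which transfers to the diameter since $P\supseteq B$ forces $P$ into a ball of radius $\operatorname{diam}(P)$ minus the radius of $B$, hence a diameter blow-up controls the Hausdorff distance), such a $P$ has $\operatorname{diam}(P)\ge 1$. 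Thus checking subfamilies of size below $n$ does not suffice. I expect the only delicate bookkeeping to be keeping the Helly extraction at exactly $d$ members — so the total stays $md\le nd$ rather than $m(d+1)$ — and the degenerate cases $K=\emptyset$ or $\dim K<d$; the genuine obstacle, namely enclosing a convex body in a few-facet polytope whose \emph{diameter} (not merely its Hausdorff distance to $K$) is controlled, is isolated in Proposition~\ref{definition-diameter} and is here only invoked.
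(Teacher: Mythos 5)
Your proof is correct and matches the paper's intent: the paper simply says to repeat the proof of Theorem \ref{theorem-helly-for-volume} with $n^{\operatorname{diam}}(d,\ep)$ in place of $n^*(d,\ep)$, and your finish via the ``trade each facet halfspace for $d$ members of $\ff$'' step is exactly the folklore Helly lemma the paper offers as an alternative to its polarity-plus-Theorem~\ref{theorem-quantitative-discrete-caratheodory} route (you in fact supply the proof the paper omits). The one place you diverge is the optimality claim: using supporting halfspaces of a ball and the $\Omega(\ep^{-(d-1)/2})$ inapproximability of the sphere only certifies a lower bound of that asymptotic order, not the exact value $n^{\operatorname{diam}}(d,\ep)$; to get the latter, follow the paper's volume-case template and take the extremal convex body witnessing the minimality of $n^{\operatorname{diam}}(d,\ep)$ (every enclosing polytope with at most $n-1$ facets has diameter exceeding $(1+\ep)\operatorname{diam}(K)$), together with a compactness argument giving a uniform gap $\delta>0$ over the finitely many subfamilies.
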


The lower bounds presented in Theorem \ref{theorem-helly-for-volume} and Theorem \ref{theorem-helly-for-diameter} show that it is impossible to conclude $\vol(\cap \ff) \ge 1$ or $\operatorname{diam}(\cap\ff)\ge 1$, respectively, regardless of the size of the subfamilies we are willing to check.  This is a remarkable difference between the continuous and discrete quantitative Helly-type theorems.  In our final continuous quantitative Helly result, we generalize Theorem \ref{theorem-helly-for-volume} to the colorful setting.

\begin{theorem}[Colorful continuous quantitative Helly with volume.]
\label{thm:hcolorcont}
For any positive integer $d$ and $\ep >0$, there exists $n=n^h(d, \ep)$ such that the following holds.  Let $\ff_1, \ldots, \ff_n$ be $n$ finite families of convex sets such that for every choice $K_1 \in \ff_1, \ldots, K_n \in \ff_n$ we have
\[
\vol{\left( \bigcap_{i=1}^n K_i\right)} \ge 1.
\]
Then, there is an index $i$ such that
\[
\vol\left( \bigcap \ff_i \right) \ge 1-\ep.
\]
\end{theorem}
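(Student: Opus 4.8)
The plan is to argue by contraposition: assuming $\vol(\bigcap \ff_i) < 1-\ep$ for every $i$, I will exhibit a single transversal $K_1 \in \ff_1, \dots, K_n \in \ff_n$ with $\vol(\bigcap_{i} K_i) < 1$, contradicting the hypothesis. The argument proceeds in two stages: first reduce to the case where every family has boundedly many members, using the monochromatic quantitative Helly theorem; then prove a colorful Helly-type statement for families of bounded size, which is where the colorful Steinitz input enters.

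\emph{Stage 1: reduction to bounded families.} Fix $i$ and pick $\delta = \delta(\ep) > 0$ small enough that $(1+\delta)^{-1} > 1-\ep$. Since $\vol(\bigcap \ff_i) < 1-\ep < (1+\delta)^{-1}$, the contrapositive of Theorem~\ref{theorem-helly-for-volume} produces a subfamily $\mathcal{D}_i \subseteq \ff_i$ with $|\mathcal{D}_i| \le t := d\cdot n^*(d,\delta)$ and $\vol(\bigcap \mathcal{D}_i) < 1$; running the same step after a harmless rescaling of the whole configuration one may even arrange $\vol(\bigcap \mathcal{D}_i) < 1 - \ep/2$, which Stage 2 will need. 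Concretely, one circumscribes $\bigcap\ff_i$ by a polytope with at most $n^*(d,\delta)$ facets and volume $<1$, and realizes each of its facet halfspaces $H \supseteq \bigcap \ff_i$ by at most $d$ members of $\ff_i$, via Helly's theorem applied to $\ff_i \cup \{H^c\}$. Since $\mathcal{D}_i \subseteq \ff_i$, every transversal of $\mathcal{D}_1, \dots, \mathcal{D}_n$ is a transversal of $\ff_1, \dots, \ff_n$, so the hypothesis persists: every such transversal still has intersection of volume at least $1$. It therefore suffices to prove the theorem under the extra assumption that each $\ff_i$ has at most $t$ members.

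\emph{Stage 2: the colorful core.} What remains is a colorful Helly-type theorem for the property ``$\vol(\bigcap \cdot) \ge 1$'' restricted to families of at most $t$ convex sets: there is an integer $N = N(d,t)$ so that, if $N$ such families have the property that every transversal has intersection-volume at least $1$, then some family itself has intersection-volume at least $1$; putting $n^h(d,\ep) := N(d,t)$ then finishes the proof. Note that $\vol\ge 1$ already forces nonemptiness, so the classical colorful Helly theorem gives, for $N\ge d+1$, that some $\bigcap\mathcal{D}_i$ is nonempty; the task is to upgrade ``nonempty'' to ``volume $\ge 1$''. I would obtain this by duality, following the paper's announced strategy. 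Normalize so that a common point lies in the interiors of all the sets in play — a preliminary translation together with the classical colorful Helly theorem makes this possible — and pass to polars: each $\mathcal{D}_i$ becomes a finite point set $A_i$, the intersection $\bigcap \mathcal{D}_i$ becomes the polar of $\conv(A_i)$, each transversal intersection becomes the polar of a rainbow convex hull $\conv\{a_1,\dots,a_n\}$ with $a_j\in A_j$, and the volume conditions become volume conditions on these polars. In the dual picture the statement reads: if $\conv(A_i)$ is ``large'' for every $i$ then some rainbow convex hull is ``large'', which is exactly the content of the colorful Steinitz theorem with volume, Theorem~\ref{theorem-thrifty-steinitz}, together with its variants Theorem~\ref{theroem-steinitz-banach-mazur} and Proposition~\ref{corollary-steinitz-for-balls}, whose optimization parameters are tailored to these dual quantities. (Alternatively, once family sizes are bounded by $t$ the property is effectively finite, and $N(d,t)$ can be extracted from the general mechanism relating colorful Helly numbers to ordinary Helly numbers.)

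The hard part is Stage 2. Volume is not a polarity invariant, so the estimate must be threaded through the specific polytope-approximation constant hidden in the definition of $n^*$ (Definition~\ref{definition-for-helly}) and through the variant Steinitz theorems, so that the distortions introduced by polarity cancel against the slack $\ep/2$ carried over from Stage 1. A secondary nuisance is choosing one origin that works simultaneously for all $n$ families. Finally, one must separately dispose of the degenerate cases in which some $\bigcap \ff_i$ is empty or lower-dimensional, since these lie outside the range where the approximation machinery applies; they are genuinely easier and can be handled by the classical colorful Helly theorem or by perturbing the sets slightly and passing to a limit.
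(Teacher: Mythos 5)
Your Stage 1 is fine and coincides with the paper's first step: applying Theorem \ref{theorem-helly-for-volume} in the contrapositive (after rescaling) to replace each $\ff_i$ by a subfamily of at most $d\cdot n^*(d,\delta)$ sets whose intersection still has small volume, while the transversal hypothesis persists. The genuine gap is Stage 2, which is not a reduction to known results but is essentially the theorem itself (restricted to bounded families), and the route you sketch for it does not work. Dualizing does \emph{not} turn the statement into Theorem \ref{theorem-thrifty-steinitz}: under polarity the quantities $\vol(\bigcap \mathcal{D}_i)$ and $\vol(\bigcap_i K_i)$ become \emph{polar} volumes of the hulls $\conv(A_i)$ and of the rainbow hulls, and volume is preserved under polarity only up to Mahler-volume factors that vary by amounts exponential in $d$ (Santal\'o/Bourgain--Milman), not within $1\pm\ep$; so these distortions cannot ``cancel against the slack $\ep/2$.'' Moreover Theorem \ref{theorem-thrifty-steinitz} needs a single convex body $K$ of volume $1$ contained in every $\conv(A_i)$, and the hypothesis only gives you, for each $i$ separately, that the polar of $\conv(A_i)$ is small; the common body $\bigcap_i \conv(A_i)$ is the polar of $\conv\bigl(\bigcup_i \bigcap\mathcal{D}_i\bigr)$, over whose volume you have no control, and no common interior point for all the polarities is guaranteed. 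The parenthetical fallback (``extract $N(d,t)$ from the general mechanism relating colorful to ordinary Helly numbers'') is also unsupported: that machinery is developed in the paper only for the discrete, monotone-and-orderable setting, and the loss-free claim you state (``some family has intersection volume $\ge 1$'') is stronger than anything the paper proves even in the monochromatic case. So the heart of the argument is missing.

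For comparison, after the same Stage-1 reduction the paper works entirely in the halfspace formulation and never compares volumes across polarity. Duality is used only qualitatively: hyperplane--point duality plus the classical colorful Helly theorem produce \emph{one} rainbow choice whose intersection has finite volume. One then takes the rainbow choice $H_1,\ldots,H_n$ minimizing $V=\vol\bigl(\bigcap_i H_i\bigr)$, uses Theorem \ref{theorem-helly-for-volume} again to find a small index set $S$ with $\vol\bigl(\bigcap_{i\in S}H_i\bigr)\le(1+\ep'')V$, and runs an averaging/exchange argument over the halfspaces of an unused color class $\ff'_j$: if every $H\in\ff'_j$ shaved off too little volume from $P=\bigcap_{i\in S}H_i$, then $\vol\bigl(\bigcap_{H\in\ff'_j}H\bigr)>1+\ep'$, a contradiction; hence some swap would decrease the volume below $V$ unless $V\le 1+O(\ep'\ep''\, d\, n^*(d,\ep'))$, and choosing $\ep',\ep''$ suitably finishes. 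Supplying an argument of this kind (or a correct substitute) for your Stage 2 is exactly what your proposal still needs.
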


Before we state our discrete quantitative versions of Helly's theorem, we introduce an extension of the usual Helly number.

\begin{definition}
Given a set $S \subset \R^d$, the \emph{$S$-Helly number} $\h_S$ (if it exists) is the smallest positive integer with the following property. Suppose that $\mathcal F$ is a finite family of convex sets in $\R^d$, and that $\bigcap\mathcal G$ intersects $S$ for every subfamily $\mathcal G$ of $\mathcal F$ having at most $\h_S$ members. Then $\bigcap\mathcal F$ intersects $S$.
\end{definition}

 Note that the $\R^d$-Helly number is the usual $d+1$ of the standard Helly's theorem. Recall that a set $S$ is \emph{discrete} if every point $x\in S$ has a neighborhood such that $x$ is the only point of $S$ within it. A simple example is the lattice $\Z^d$. When $S$ is a discrete set, such as a lattice, the intersections are countable; thus we are
 able to quantify by counting points.
For a lattice $L$, Doignon was the first to calculate the $L$-Helly number, which has since been much studied by researchers in optimization (see e.g., \cite{Bell:1977tm,Sca1977,Hoffman:1979ix,clarkson}).

\begin{theoremp}[J.-P. Doignon, 1973 \cite{Doi1973}] 
Let $L$ be a rank-$d$ lattice inside $\R^d$. Then, $\h_L$ exists and is at most $2^d$.
\end{theoremp}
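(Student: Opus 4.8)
The plan is to reduce to $L=\Z^d$ and then carry out the classical Doignon--Bell--Scarf argument. First I would fix an invertible linear map $T$ with $T(\Z^d)=L$; since $T^{\pm1}$ send convex sets to convex sets and preserve all intersection patterns, this reduces matters to showing: if $\ff$ is a finite family of convex sets in $\R^d$ such that $\bigcap\mathcal G$ meets $\Z^d$ whenever $\mathcal G\subseteq\ff$ and $|\mathcal G|\le 2^d$, then $\bigcap\ff$ meets $\Z^d$ (this is equivalent to ``$\h_{\Z^d}$ exists and is at most $2^d$'' --- existence is automatic once a finite bound works). I would argue by contradiction: take a violating family $\ff$ with $n:=|\ff|$ minimal, so $n\ge 2^d+1$. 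Minimality forces every proper subfamily $\mathcal G\subsetneq\ff$ to have a common lattice point, since otherwise $\mathcal G$ would be a smaller violating family (its $\le 2^d$-subfamilies are $\le 2^d$-subfamilies of $\ff$). So for each $C\in\ff$ I could fix a lattice point $z_C\in\bigcap(\ff\setminus\{C\})$; necessarily $z_C\notin C$, and the $z_C$ are distinct. I would also assume the members of $\ff$ closed (the general case following by a routine limiting argument), so that nearest points are attained.

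The core is a parity count. I would choose each $z_C$ to minimize $\delta_C:=\operatorname{dist}(z_C,C)>0$ over the lattice points of $\bigcap(\ff\setminus\{C\})$. Since $(\Z/2\Z)^d$ has exactly $2^d$ elements and $n>2^d$, two witnesses agree modulo $2$: $z_C\equiv z_{C'}\pmod2$ with $C\ne C'$, so $w:=\tfrac12(z_C+z_{C'})$ is again a lattice point. I would then show $w\in\bigcap\ff$, contradicting the choice of $\ff$. For any $D\in\ff$ with $D\ne C,C'$ we have $z_C,z_{C'}\in D$, hence $w\in D$ by convexity, so only $C$ and $C'$ are in doubt. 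Since $z_{C'}\in C$, if $p\in C$ is nearest to $z_C$ then $\tfrac12(p+z_{C'})\in C$ and $\|\,w-\tfrac12(p+z_{C'})\,\|=\tfrac12\|z_C-p\|=\tfrac12\delta_C$, so $\operatorname{dist}(w,C)\le\tfrac12\delta_C<\delta_C$; symmetrically $\operatorname{dist}(w,C')<\delta_{C'}$. If $w\in C'$, then $w$ is a lattice point of $\bigcap(\ff\setminus\{C\})$ strictly closer to $C$ than $z_C$, contradicting minimality; hence $w\notin C'$, and by symmetry $w\notin C$.

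The surviving case --- $w$ outside \emph{both} $C$ and $C'$ --- is the one genuinely delicate point, and I expect it to be the main obstacle. I would handle it by performing the minimization over all witnesses simultaneously, e.g.\ by choosing the tuple $(z_C)_{C\in\ff}$ whose decreasing rearrangement of $(\delta_C)_{C}$ is lexicographically least, and then checking that such a $w$ --- being much closer to $C$ and to $C'$ than $z_C$ and $z_{C'}$ --- would force a lexicographically smaller admissible tuple. Making this interlocking of the choice rule and the midpoint estimate precise is the real combinatorial content of the theorem; an alternative is to first reduce to families of halfspaces and invoke the polyhedral Doignon--Bell--Scarf statement directly. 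Either way one contradicts the minimality of $\ff$, yielding $n\le 2^d$ and hence $\h_{\Z^d}\le 2^d$.

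Finally, for sharpness I would record the standard example: for $v\in\{0,1\}^d$ set $H_v=\{x\in\R^d:\langle 2v-\mathbf 1,\,x-v\rangle\le-\tfrac12\}$. One checks that $\bigcap_{v}H_v$ contains no lattice point while $v\in\bigcap_{u\ne v}H_u$ for every $v$, so all $2^d$ halfspaces are needed; the $2^d$ witnesses $v\in\{0,1\}^d$ form a complete residue system modulo $2$, matching the pigeonhole count exactly. (For $d=1$ the statement is elementary.)
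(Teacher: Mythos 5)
Your reduction to $\Z^d$, the minimal-counterexample setup, the pigeonhole modulo $2$, and the elimination of the cases where the midpoint $w$ lies in exactly one of $C$, $C'$ are all correct; the problem is that the one case you yourself flag as delicate, namely $w\notin C$ and $w\notin C'$, is not handled, and it is exactly the case your extremal principle cannot reach. In that case $w$ fails two members of $\ff$, so it is not an admissible witness for \emph{any} index (a witness for $C$ must lie in every other set, in particular in $C'$), and hence it cannot be substituted into the tuple $(z_C)_{C\in\ff}$ at all: no lexicographically smaller admissible tuple is produced, and the intended contradiction evaporates. Declaring that making this work ``is the real combinatorial content of the theorem'' concedes precisely the point at issue; as written, the argument only disposes of the easy cases. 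The fallback you offer --- reduce to halfspaces and ``invoke the polyhedral Doignon--Bell--Scarf statement directly'' --- is circular, since that polyhedral statement \emph{is} the theorem to be proved. (A secondary technical issue: the minimum of $\delta_C$ over the infinitely many lattice witnesses need not be attained, e.g.\ for halfspaces with irrational normals; this is repairable, but it is not addressed.)

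A clean way to close the gap, and the one consistent with the machinery of Section \ref{section-helly}, is to move the parity argument from witnesses attached to different convex sets to the points of a single configuration. By Hoffman's theorem \cite{Hoffman:1979ix} (the case $k=1$ of Lemmas \ref{qHoffman} and \ref{qScarf}), $\h_S$ equals the largest cardinality of a set $P\subset S$ such that $\conv(P)$ contains no point of $S$ other than its vertices. For $S=\Z^d$, if $|P|>2^d$ then two distinct points of $P$ are congruent modulo $2$, and their midpoint is a lattice point of $\conv(P)$ lying in the relative interior of a segment joining two points of $P$, hence not a vertex of $\conv(P)$; therefore every such $P$ has at most $2^d$ points and $\h_{\Z^d}\le 2^d$. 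In this formulation the midpoint is automatically ``admissible,'' so the case analysis that blocks your argument never arises. Your sharpness example (the $2^d$ halfspaces cutting the vertices off $[0,1]^d$) is correct, but it only complements the upper bound you have not yet established.
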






Doignon's theorem is just one of many results about $S$-Helly numbers. For instance, we know $\h_{\Z^{a} \times \R^{b}}=(b+1)2^{a}$ (see \cite{AW2012}). Most relevant for us are the results in \cite{queretaro} which generalized Doignon's theorem for discrete sets that are not lattices, effectively bounding $\h_S$ in several new situations.

\begin{theoremp}[J.A. De Loera et al. \cite{queretaro}] 
Let $L$ be a lattice in $\R^d$ and let $L_1,\dots,L_m$ be $m$ sublattices of $L$.  Let $R_m$ be the Ramsey number $R(3,3,\dots,3)$, i.e., the minimum number of vertices needed to guarantee the existence of a monochromatic triangle in any edge-coloring, using $m$ colors, of the complete graph $K_{R_m}$. Then the set $S = L \setminus (L_1\cup\dots\cup L_k)$ satisfies $\h_S\le (R_m-1)2^d$.

\end{theoremp}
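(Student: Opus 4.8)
The plan is to pass, through the Hoffman-type characterization of $S$-Helly numbers (the enumerative generalization of Hoffman's 1979 theorem discussed above; see also \cite{Hoffman:1979ix, AW2012, queretaro}), to a purely combinatorial statement about $S$-free polyhedra, and then to combine a parity argument in the spirit of Doignon's proof with Ramsey's theorem for monochromatic triangles. Concretely, it is enough to show that every inclusion-maximal convex set $M\subseteq\R^d$ with $\operatorname{relint}(M)\cap S=\emptyset$ --- such a set being automatically a polyhedron --- has at most $(R_m-1)2^d$ facets; this bounds $\h_S$. We may assume $L$ has full rank $d$ (otherwise work inside $\aff(L)$, which only sharpens the bound), that $M$ is full-dimensional (otherwise pass to $\aff(M)$), and, after a linear change of variables, that $L=\Z^d$. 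By the structure theory of maximal $S$-free polyhedra (the generalized Doignon theorem of \cite{queretaro}), each facet $F_t$ of $M=\bigcap_{t=1}^s H_t$, with $H_t=\{x:a_t\cdot x\le b_t\}$, contains a point $z_t\in S$ in its relative interior, that is, $a_t\cdot z_t=b_t$ while $a_u\cdot z_t<b_u$ for all $u\ne t$. The goal is $s\le(R_m-1)2^d$.

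Suppose for contradiction that $s>(R_m-1)2^d$. The residues $z_t+2\Z^d\in(\Z/2\Z)^d$ take at most $2^d$ values, so some value is shared by a set $T$ of at least $\lceil s/2^d\rceil\ge R_m$ indices. For any $t,t'\in T$ the point $w_{tt'}:=\tfrac12(z_t+z_{t'})$ lies in $\Z^d$, and in fact in $\operatorname{relint}(M)$: for $u\notin\{t,t'\}$ we have $a_u\cdot w_{tt'}=\tfrac12(a_u\cdot z_t+a_u\cdot z_{t'})<b_u$, while $a_t\cdot w_{tt'}=\tfrac12(b_t+a_t\cdot z_{t'})<b_t$ because $a_t\cdot z_{t'}<b_t$, and symmetrically $a_{t'}\cdot w_{tt'}<b_{t'}$. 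Since $\operatorname{relint}(M)\cap S=\emptyset$, we conclude $w_{tt'}\in\Z^d\setminus S=L_1\cup\dots\cup L_m$, so $w_{tt'}\in L_{\ell(t,t')}$ for some $\ell(t,t')\in\{1,\dots,m\}$.

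Now color the edge $\{t,t'\}$ of the complete graph on $T$ by $\ell(t,t')$, breaking ties by least index. This is a coloring of $K_{|T|}$ with $m$ colors, and $|T|\ge R_m=R(3,\dots,3)$, so there is a monochromatic triangle $\{t_1,t_2,t_3\}$, say of color $\ell$; thus $w_{t_1t_2},w_{t_1t_3},w_{t_2t_3}\in L_\ell$. As $L_\ell$ is a subgroup of $\Z^d$, hence closed under integer linear combinations,
\[
z_{t_1}=w_{t_1t_2}+w_{t_1t_3}-w_{t_2t_3}\in L_\ell,
\]
contradicting $z_{t_1}\in S=\Z^d\setminus(L_1\cup\dots\cup L_m)$. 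Hence $s\le(R_m-1)2^d$, the desired bound on the facet count and therefore on $\h_S$.

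I expect the combinatorial core above to be routine; the real work lies in the two structural inputs. The more delicate one is the claim that, for $S=L\setminus(L_1\cup\dots\cup L_m)$, an inclusion-maximal $S$-free convex set is a polyhedron each of whose facets carries an $S$-point in its relative interior. Doignon's original argument (for a genuine lattice) expands each facet outward until a new lattice point is forced to appear; here one must check that deleting the sublattices $L_\ell$ cannot destroy this --- for instance, that it does not allow non-polyhedral maximal $S$-free sets, nor facets all of whose newly appearing lattice points lie in $\bigcup_\ell L_\ell$ --- which is exactly the generalized-Doignon content of \cite{queretaro}. The second input is the Hoffman-type reduction from $\h_S$ to the facet count of maximal $S$-free polyhedra. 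Granting both, the parity step, the Ramsey step, and the arithmetic producing the constant $(R_m-1)2^d$ present no difficulty, and the same goes for the low-rank and non-full-dimensional reductions.
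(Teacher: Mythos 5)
Your combinatorial core is exactly right, and it is the same engine that drives the known proof: pigeonhole the chosen $S$-points modulo $2L$ to get $R_m$ of them in one residue class, color pairs by a sublattice containing the midpoint, extract a monochromatic triangle by Ramsey, and use the subgroup identity $z_{t_1}=w_{t_1t_2}+w_{t_1t_3}-w_{t_2t_3}$ to contradict $z_{t_1}\in S$. That part is correct as written, including the verification that the midpoints land in the interior.

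The gap is in the frame you hang this on. You reduce $\h_S$ to the facet count of inclusion-maximal convex sets $M$ with $\operatorname{relint}(M)\cap S=\emptyset$, and you need two unproven structural facts: (i) that such maximal $S$-free sets are polyhedra, each facet carrying a point of $S$ in its relative interior, and (ii) that the maximal facet count of such sets actually bounds $\h_S$. Neither is routine. For (i), polyhedrality of maximal $S$-free sets is a genuine theorem even for $S=\Z^d$ (Lov\'asz; Basu--Conforti--Cornu\'ejols--Zambelli) and is false for arbitrary discrete $S$; for $S=L\setminus(L_1\cup\dots\cup L_m)$ it would have to be proved, and it is not the content of \cite{queretaro} --- that paper establishes the Helly bound itself, by a different route, so deferring the structure theory to it makes your argument circular. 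For (ii), the obvious argument (separate $\cap\ff$ from each facet-defining halfspace and apply Helly to each) only yields $\h_S\lesssim d\cdot(\text{max facet count})$; the exact facet-count bound is an Averkov-type theorem \cite{Ave2013} that itself requires proof in this setting. The standard repair avoids maximal $S$-free sets entirely: by the Hoffman characterization (Lemma \ref{qHoffman} with $k=1$, which gives $\h_S=\h'_S(1)$) together with Lemma \ref{qScarf}, it suffices to bound the cardinality of a hollow set $P\subseteq S$, i.e.\ a finite set all of whose points are vertices of $\conv(P)$ with no other $S$-points in $\conv(P)$. Your midpoint--Ramsey argument transplants verbatim to the vertices of such a $P$ (a midpoint of two distinct vertices is never a vertex, hence lies in some $L_\ell$), giving $|P|\le(R_m-1)2^d$ with no structure theory needed; this is the route taken in \cite{queretaro} and mirrored in this paper's proof of Theorem \ref{quantitative-discrete-doignon}.
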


 
In \cite{ipcoversion}, Aliev, De Loera, and Louveaux first showed an integer quantitative Helly-type theorem over $\Z^d$, generalizing Doignon's theorem (the bounds for the $\Z^d$-Helly number were later improved in \cite{alievetal}). Our Theorem \ref{theorem-helly-for-volume} matches closely the structure of the following result:
 
\begin{theoremp}[I. Aliev et al. \cite{ipcoversion}, 2014] 
Let $d,k$ be positive integers and $L \subset \R^d$ be a lattice of rank $d$. Then, there is a universal constant $c(d,k) \leq  \lceil 2(k+1)/3\rceil 2^d-2\lceil 2(k+1)/3\rceil+2$ such that the following property holds. For any collection $(X_i)_{i \in \Lambda}$ of closed convex sets in $\R^d$, where at least one of the sets is bounded, and exactly $k$ points of $L$ are in $\bigcap_{i \in \Lambda} X_i$, there is a subcollection of size at most $c(d,k)$ with the same $k$ lattice points in its intersection.
\end{theoremp}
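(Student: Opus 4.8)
The plan is to recast the statement in terms of $S$-Helly numbers and then run a refinement of Doignon's argument. \emph{First}, apply a linear isomorphism carrying $L$ to $\Z^d$, so we may assume $L=\Z^d$. Put $X=\bigcap_{i\in\Lambda}X_i$ and $F=X\cap\Z^d=\{p_1,\dots,p_k\}$. The key fact to record at the outset is that $F$ is \emph{lattice-convex}: since $X$ is convex and contains $F$, it contains $\conv(F)$, hence $\conv(F)\cap\Z^d=F$. \emph{Next}, I would pass to a finite subfamily. Choosing a bounded member $X_{i_0}$, the set $X_{i_0}\cap\Z^d$ is finite, and for each lattice point $q\in(X_{i_0}\cap\Z^d)\setminus F$ one can pick an index $i(q)$ with $q\notin X_{i(q)}$; then $\mathcal G_0=\{X_{i_0}\}\cup\{X_{i(q)}\}$ is finite, $F\subseteq\bigcap\mathcal G_0\subseteq X_{i_0}$, and every lattice point of $X_{i_0}$ outside $F$ is killed by a member of $\mathcal G_0$, so $\bigcap\mathcal G_0\cap\Z^d=F$. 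This reduces the theorem to a uniform upper bound on the $S$-Helly number $\h_S$ for $S=\Z^d\setminus F$ as $F$ ranges over lattice-convex sets of size $k$: applying the definition of $\h_S$ to $\mathcal G_0$ (whose intersection misses $S$) produces a subfamily $\mathcal G\subseteq\mathcal G_0$ of size at most $\h_S$ with $\bigcap\mathcal G\cap S=\emptyset$, and since each $p_j$ lies in every member of $\mathcal G$ we in fact get $\bigcap\mathcal G\cap\Z^d=F$, as required. (This is also where the hypothesis that one $X_i$ is bounded enters.)

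\emph{Then}, to estimate $\h_S$, take a subfamily $\mathcal G=\{Y_1,\dots,Y_n\}$ minimal with respect to $\bigcap\mathcal G\cap\Z^d=F$. Minimality yields, for each $j$, a lattice point $z_j\in\bigcap_{\ell\ne j}Y_\ell$ with $z_j\notin Y_j$; in particular $z_j\notin F$. I would sort the $z_j$ by their residue in $(\Z/2\Z)^d$. For indices $i\ne j$ in one residue class, the midpoint $w=(z_i+z_j)/2$ is a lattice point and lies in $Y_\ell$ for all $\ell\notin\{i,j\}$; averaging such points, and where necessary sliding along the segments $[z_i,z_j]$ to also enter the two exceptional halfspaces, should force a lattice point into $\bigcap\mathcal G$ once a residue class collects too many indices. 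Since any such point must then land in the small lattice-convex set $F$, and $F$ meets only few residue classes and contains few points in each, this gives a ceiling on how many $z_j$ a single residue class can absorb; summing over the $2^d$ classes and optimizing the distribution of $F$ among them yields $n\le\lceil 2(k+1)/3\rceil 2^d-2\lceil 2(k+1)/3\rceil+2$. As consistency checks: for $F=\emptyset$ this is exactly Doignon's $2^d$, and in dimension $1$ the extra term $\lceil 2(k+1)/3\rceil(2^d-2)$ vanishes, matching the fact that a lattice-convex $F\subset\Z$ is a block of consecutive integers, for which two intervals always suffice.

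\emph{Finally}, combining the finite reduction with this estimate gives $\h_{\Z^d\setminus F}\le\lceil 2(k+1)/3\rceil 2^d-2\lceil 2(k+1)/3\rceil+2=:c(d,k)$ for every lattice-convex $F$ of size $k$, which is the theorem. I expect the second step to be the main obstacle, for two reasons. Unlike in Doignon's theorem, the averaged point $w$ need not satisfy the dropped constraints $Y_i,Y_j$, so a naive residue-class pigeonhole fails; one must instead move carefully along the segments $[z_i,z_j]$ --- tracking the two critical halfspaces and the way they flip --- to produce a lattice point satisfying \emph{all} constraints at once. And extracting the exact constant $2/3$ (rather than a weaker $O(k)$ or Ramsey-type factor, as in the sublattice-complement bound quoted above) requires a tight accounting of the interaction between this averaging process and the scarcity of lattice points of $F$ in a single residue class. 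Verifying an extremal configuration --- $F$ realized as $k$ collinear lattice points inside a thin box, together with that box's facet halfspaces --- should confirm that a bound of this shape is essentially best possible.
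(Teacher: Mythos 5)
The statement you are proving is quoted in the paper from Aliev--De Loera--Louveaux \cite{ipcoversion}; the paper itself gives no proof of it (its own related result, Theorem \ref{quantitative-discrete-doignon}, is proved by a different route, via the quantitative Hoffman number and $k$-hollow sets, and with a different bound), so your proposal has to stand on its own. Its reduction phase does: passing to $L=\Z^d$, observing that $F=\bigcap_i X_i\cap\Z^d$ is lattice-convex, using the bounded member to extract a finite subfamily $\mathcal G_0$ with $\bigcap\mathcal G_0\cap\Z^d=F$, and then taking a minimal subfamily and its witness points $z_j\in\bigcap_{\ell\ne j}Y_\ell\cap\Z^d$, $z_j\notin Y_j$, is all correct and is the standard frame (you do not even need the $S$-Helly formalism for $S=\Z^d\setminus F$; bounding the size of a minimal subfamily already gives the theorem).

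The genuine gap is that the entire quantitative content of the theorem --- the claim that each parity class of $(\Z/2\Z)^d$ can contain only about $\lceil 2(k+1)/3\rceil$ of the witnesses $z_j$, summing to $c(d,k)=\lceil 2(k+1)/3\rceil(2^d-2)+2$ --- is asserted but not proved. You correctly identify the obstruction yourself: the midpoint $(z_i+z_j)/2$ of two same-parity witnesses lies only in $\bigcap_{\ell\ne i,j}Y_\ell$, not in $Y_i\cap Y_j$, so the naive Doignon pigeonhole produces no point of the full intersection; but your remedy, ``sliding along the segments $[z_i,z_j]$ \ldots to also enter the two exceptional halfspaces,'' is exactly the step that fails without further ideas. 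The lattice points of $[z_i,z_j]$ need not meet $Y_i\cap Y_j$ at all (the segment can exit $Y_i$ before it enters $Y_j$), and nothing in the sketch converts ``many witnesses in one residue class'' into at least $k+1$ lattice points of $\bigcap\mathcal G$, which is what a contradiction with $|F|=k$ requires; in particular no argument is offered for where the specific factor $2(k+1)/3$ (as opposed to some $O(k)$ bound) comes from, and you concede this (``should force,'' ``requires a tight accounting''). The consistency checks at $k=0$ and $d=1$ and the guess of an extremal configuration are sanity checks, not substitutes for this lemma. As written, the proposal is a correct reduction plus an unproven core estimate, so it does not yet constitute a proof of the stated bound; to complete it you would need the counting argument of \cite{ipcoversion} (or of its sharpening in \cite{alievetal}), which analyzes lattice points on the segments spanned by same-parity witnesses and their interaction with the $k$ points of $F$ far more carefully than a single midpoint construction.
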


We now  present a generalization of the preceding theorems. We use the following definition based on \cite{alievetal,ipcoversion}. The condition that $S$ must be discrete is necessary if the following definition is to make sense for $k>1$.



\begin{definition}
Given a discrete set $S \subset \R^d$, the \emph{quantitative $S$-Helly number} $\h_S(k)$ (if it exists) is the smallest positive integer with the following property. Suppose that $\mathcal F$ is a finite family of convex sets in $\R^d$, and that $\bigcap\mathcal G$ intersects $S$ in at least $k$ points for every subfamily $\mathcal G$ of $\mathcal F$ having at least $\h_S(k)$ members. Then $\bigcap\mathcal F$ intersects $S$ in at least $k$ points
\end{definition}

\begin{theorem}[Discrete quantitative Helly for differences of lattices] \label{quantitative-discrete-doignon} 
Let $L$ be a lattice in $\R^d$ and let $L_1,\dots,L_m$ be $m$ sublattices of $L$.
 Let $S=L \setminus (L_1\cup\dots\cup L_m)$. Then the quantitative $S$-Helly number $\h_S (k)$ exists and is bounded above by $\left(2^{m+1}k+1\right)^r$, where $r=\textnormal{rank}(L)$. 
\end{theorem}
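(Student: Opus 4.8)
The plan is to reduce the quantitative $S$-Helly number for $S = L \setminus (L_1 \cup \dots \cup L_m)$ to an ordinary (non-quantitative) $S'$-Helly number for a cleverly enlarged discrete set $S'$ living in a higher-rank lattice, exactly in the spirit of how quantitative Helly theorems are usually bootstrapped from qualitative ones. First I would set $r = \operatorname{rank}(L)$ and, without loss of generality, assume $L = \Z^r$ (apply an affine isomorphism of $\R^d$ restricted to $\operatorname{aff}(L)$; sublattices go to sublattices, and volumes/Helly numbers are unaffected). The key construction is the following ``$k$-fold product trick'': to force $\bigcap \ff$ to contain at least $k$ points of $S$, consider the set $\widehat S \subset (\R^r)^k = \R^{rk}$ of all $k$-tuples $(s_1, \dots, s_k)$ of \emph{distinct} points of $S$, i.e. $\widehat S = S^k \setminus \Delta$ where $\Delta$ is the union of the diagonals $\{x_i = x_j\}$. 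A convex set $K \subset \R^r$ intersects $S$ in at least $k$ points iff the convex set $K^k \subset \R^{rk}$ intersects $\widehat S$. So the quantitative $S$-Helly number $\h_S(k)$ is bounded by the (qualitative) $\widehat S$-Helly number $\h_{\widehat S}$ — modulo the subtlety that not every convex set in $\R^{rk}$ is a product, which is handled because in the Helly argument one only ever needs to certify nonemptiness of intersections, and products of the original family members suffice to witness the failures.

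The second step is to bound $\h_{\widehat S}$ using the De Loera et al.\ theorem on differences of lattices quoted above. The obstacle is that $\widehat S$ is not of the form ``lattice minus sublattices'': it is $S^k$ minus diagonals, and $S^k = (L \setminus \bigcup L_i)^k$ is itself $L^k$ minus a union of sets each of which is a product of sublattices and full copies of $L$ — these products $L_{i_1} \times \dots \times L_{i_k}$ are again sublattices of $L^k$, and there are $(m+1)^k - 1$ of them (choosing for each coordinate one of $L_1, \dots, L_m$ or the full $L$, excluding the all-$L$ choice). The diagonals $\{x_i = x_j\} \cap L^k$ are likewise rank-$(r(k-1))$ sublattices of $L^k$, and there are $\binom{k}{2}$ of them. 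Hence $\widehat S = L^k \setminus (M_1 \cup \dots \cup M_N)$ with $L^k$ a lattice of rank $rk$ and $N = (m+1)^k - 1 + \binom{k}{2}$ sublattices. Applying the quoted theorem gives $\h_{\widehat S} \le (R_N - 1) 2^{rk}$, where $R_N$ is the multicolor Ramsey number $R(3,\dots,3)$ with $N$ colors. This is an explicit finite bound, but it is far larger than the claimed $(2^{m+1}k+1)^r$, so a direct appeal to the quoted theorem as a black box will not give the stated estimate.

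To obtain the sharper bound $(2^{m+1}k+1)^r$ I would instead run the De Loera et al.\ proof technique directly on the family in $\R^r$ rather than invoking it over $\R^{rk}$. Their method partitions lattice points by residue class and applies Ramsey/pigeonhole within each class; the quantitative refinement is to track $k$ points simultaneously. Concretely: cover $L$ by the $2^{m+1}$-type pattern used to analyze $L \setminus \bigcup L_i$ (Doignon-style, each sublattice contributes a factor of $2$, plus the complement structure contributes one more factor, giving base $2^{m+1}$), and show that if every subfamily of size $(2^{m+1}k+1)^r$ has $\ge k$ points of $S$ in its intersection, then one can extract, coordinate-residue-class by coordinate-residue-class, a consistent system of $k$ points that survives in $\bigcap\ff$ — the exponent $r$ coming from iterating over the $r$ coordinate directions of $L$, and the $+1$ inside the parenthesis absorbing the ``one more point needed'' overhead at each stage. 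The main obstacle, and where I would spend the most care, is exactly this inductive extraction: showing that the local (residue-class) witnesses can be glued into a global set of $k$ distinct points of $S$, i.e.\ that the diagonals never force a collapse and that the sublattice-avoidance constraints ($x \notin L_i$) are preserved under the gluing. Once the gluing lemma is in place, the count $(2^{m+1}k+1)^r$ follows by a direct product-over-coordinates estimate, and existence of $\h_S(k)$ is immediate from finiteness of that bound.
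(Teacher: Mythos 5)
Your first two paragraphs are sound as far as they go: the $k$-fold product trick is a valid reduction, since the $\widehat S$-Helly number applies to arbitrary finite families of convex sets in $\R^{rk}$ and in particular to the family of products $K^k$, and $\widehat S=S^k\setminus\Delta$ is indeed a rank-$rk$ lattice minus finitely many sublattices, so the quoted theorem of De Loera et al.\ gives a finite bound. But, as you yourself observe, this only proves \emph{existence} of $\h_S(k)$ with a Ramsey-type bound of order $(R_N-1)2^{rk}$; the actual content of the theorem, the bound $\left(2^{m+1}k+1\right)^r$, is addressed only in your final paragraph, and there the argument is a plan rather than a proof. The ``gluing lemma'' you would need --- assembling residue-class witnesses into $k$ distinct points of $S$ inside $\bigcap\ff$ while avoiding the sublattices and the diagonals --- is neither precisely formulated nor proved, and it is exactly where the difficulty sits. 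Moreover, the mechanism you sketch does not match where the count comes from: the exponent $r$ does not arise from iterating over $r$ coordinate directions, and no coordinate-by-coordinate extraction is involved.

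What actually yields the bound is a reduction of the Helly number to a statement about \emph{hollow sets}, not a refinement of the Helly-type induction. The paper first shows (Lemma \ref{qHoffman}) that $\h_S(k)\le \h'_S(k)$, the quantitative Hoffman number, and (Lemma \ref{qScarf}) that $\h'_S(k)$ equals the largest cardinality of a $k$-hollow set, i.e.\ a finite $P\subset S$ with fewer than $k$ points of $S$ in $\conv(P)\setminus V(\conv(P))$. It then proves that no $P\subseteq S$ with $|P|>\left(2^{m+1}k+1\right)^r$ can be $k$-hollow: setting $n=2^{m+1}k+1$, the pigeonhole principle on the $n^r$ residue classes of $L/nL$ produces two points of $P$ congruent modulo $n$, hence $n+1$ equally spaced collinear points $z_0,\dots,z_n$ of $L$ in $\conv(P)$ with $z_0,z_n\in S$; a powers-of-two argument shows that among any $2^m+1$ consecutive $z_i$ at least one avoids all the sublattices, so the $k$ disjoint windows inside $z_1,\dots,z_{n-1}$ supply $k$ non-vertex points of $S$ in $\conv(P)$. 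This is where both the factor $2^{m+1}k$ and the exponent $r$ (from $|L/nL|=n^r$) come from. To complete your proposal you would have to supply this (or an equivalent) argument; as written, the claimed bound is not established, so there is a genuine gap, even though your product construction is a nice self-contained route to mere existence of $\h_S(k)$.
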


Theorem \ref{quantitative-discrete-doignon} can be made into a colorful version. In fact, more can be said. As long as the set $S$ is discrete and has a finite quantitative $S$-Helly number $\h_S (k)$ then there will be colorful version too.  The conditions needed to be able to derive colorful Helly-type theorems have been used by several authors e.g., \cite{AW2012,baranymatousek}, and recently summarized in \cite{queretaro}.
 
As noted above, one needs the fact that the property \emph{``having at least $k$ points of $S$''}
has a finite $S$-Helly number. Second,
the property of having at least $k$ points of $S$ is 
 \emph{monotone} in the sense that if  $K\subset K'$ and $K$ has at least $k$ points from $S$, then this implies that $K'$ has also at least $k$ points of $S$ within. Finally, the property of having at least $k$ points from $S$ is \emph{orderable}, because  for any finite family $\F$ of convex sets there is a direction $v$ such that:
\begin{enumerate}
\item For every $K\in\F$ with $|K \cap S|\geq k$, there is a containment-minimal $v$-semispace (i.e. a half-space of the form $\{x:v^T x\ge 0\}$) $H$ such that $|K\cap H|\geq k$.
\item There is a unique containment-minimal $K'\subset K\cap H$ with $|K' \cap S|\geq k$.
\end{enumerate}

In our case, the work presented in \cite{queretaro} shows that
every monotone and orderable property with a well-defined Helly number must \emph{colorable}. 
This together with Theorem \ref{quantitative-discrete-doignon} yields the following:

\begin{theorem}Let $S$ be a discrete set in $\R^d$ with finite quantitative $S$-Helly number $N=\h_S(k)$.
 If $\F_1,\dots\F_N$ are finite families of closed convex sets (we think of each being a different color classes) such that $|\bigcap {\cal G} \cap S|\geq k$ for every rainbow subfamily ${\cal G}$ (i.e. a family with $|{\cal G} \cap\F_i|=1$ for every $i$), then $|\bigcap\F_i \cap S|\geq k$ for some color family $\F_i$.
\end{theorem}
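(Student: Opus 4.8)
The plan is to realize the assertion as an instance of the colorability transfer principle recalled immediately above (due to \cite{queretaro}): a property of closed convex sets that is \emph{monotone}, \emph{orderable}, and has a finite Helly number is automatically colorable, with the colorful Helly number equal to the monochromatic one. So fix $S$ and $k$ and let $P_k$ be the property ``$K$ is a closed convex set with $|K\cap S|\ge k$''. By hypothesis $P_k$ has finite Helly number $N=\h_S(k)$, so it remains to check that $P_k$ is monotone and orderable; the conclusion of the theorem is then exactly the colorable conclusion the transfer principle produces with $N$ colors. Monotonicity is immediate: if $K\subseteq K'$ and $|K\cap S|\ge k$, then $|K'\cap S|\ge|K\cap S|\ge k$.

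The content is verifying orderability, i.e.\ producing, for each finite family $\F$ of closed convex sets, a single direction $v$ with the two listed properties. I would first reduce to the case of bounded sets: the families $\F_1,\dots,\F_N$ in the statement are finite, so there are only finitely many rainbow intersections, and one can fix a single closed ball $B$ large enough that for each such intersection $B$ contains a $k$-element subset of $S$ witnessing the hypothesis; proving the colorful statement for the truncated families $\{K\cap B:K\in\F_i\}$ then implies it for the original families, because a point of $S$ common to a truncated monochromatic intersection lies in the corresponding original intersection as well, and the Helly number of $P_k$ restricted to bounded sets is still at most $\h_S(k)$. Once all sets are bounded, $K\cap S$ is finite for every $K$ (here one uses that $S$ is discrete), so $S':=\bigcup_{K\in\F}(K\cap S)$ is finite; choose $v$ so that $x\mapsto v^\top x$ is injective on $S'$, which is possible since the bad directions lie in a finite union of hyperplanes. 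For $K\in\F$ with $|K\cap S|\ge k$, let $a$ be the $k$-th largest value of $v^\top x$ over the finitely many $x\in K\cap S$; then $H=\{x:v^\top x\ge a\}$ is the containment-minimal $v$-semispace with $|K\cap H\cap S|\ge k$, it meets $K\cap S$ in precisely the $k$ points of largest $v$-value, and their convex hull is the unique containment-minimal subset of $K\cap H$ satisfying $P_k$ --- any subset with $k$ points of $S$ contains all of them, and this convex hull, being inside $H$, can pick up no further point of $S$, since such a point would have $v$-value $\ge a$ and hence, by genericity, be one of the chosen $k$. This gives both clauses of orderability.

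With $P_k$ shown to be monotone, orderable, and of finite Helly number $N=\h_S(k)$, the transfer theorem of \cite{queretaro} applies and yields exactly the claim: whenever $\F_1,\dots,\F_N$ are finite families of closed convex sets with $|\bigcap\mathcal G\cap S|\ge k$ for every rainbow $\mathcal G$, some color class satisfies $|\bigcap\F_i\cap S|\ge k$. I expect the only genuine work to be in the orderability step --- the reduction to bounded sets and the simultaneous choice of a generic direction $v$ serving every member of the family at once --- whereas monotonicity and the appeal to the transfer theorem are routine.
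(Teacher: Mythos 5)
Your proposal is correct and follows essentially the same route as the paper: the paper also deduces the theorem by observing that the property ``$|K\cap S|\ge k$'' is monotone and orderable with finite Helly number $\h_S(k)$ and then invoking the colorability transfer result of \cite{queretaro}. The only difference is that you spell out the orderability verification (truncation to bounded sets, generic direction $v$, top-$k$ points of $K\cap S$), which the paper merely asserts; your elaboration is sound.
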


\begin{corollary}[Colorful quantitative Helly for differences of lattices] \label{aquantitative-discrete-doignon} 
Let $L$ be a lattice in $\R^d$ and let $L_1,\dots,L_m$ be $m$ sublattices of $L$.
 Let $S=L \setminus (L_1\cup\dots\cup L_m)$.  Let $N=\left(2^{m+1}k+1\right)^r$ where $r=\textnormal{rank}(L)$ and $\F_1, \ldots, \F_N$ be finite families of closed convex sets so that $|\bigcap {\cal G} \cap S|\geq k$ for every rainbow subfamily ${\cal G}$.  then, there is an $i$ such that $|\bigcap\F_i \cap S|\geq k$.
\end{corollary}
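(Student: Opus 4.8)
The plan is to obtain Corollary~\ref{aquantitative-discrete-doignon} directly from Theorem~\ref{quantitative-discrete-doignon} together with the colorful Helly transference theorem stated just above it; no further geometry is needed, only a short bookkeeping step to reconcile the number of color classes.

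First I would check that the hypotheses of the transference theorem are met. Since $S = L\setminus(L_1\cup\dots\cup L_m)$ is a subset of the lattice $L$, it is discrete. By Theorem~\ref{quantitative-discrete-doignon} the quantitative $S$-Helly number $\h_S(k)$ exists and satisfies $\h_S(k)\le N$, where $N=\left(2^{m+1}k+1\right)^r$ and $r=\textnormal{rank}(L)$. Moreover the property ``a convex set meets $S$ in at least $k$ points'' is monotone and orderable, which (as recalled in the discussion preceding the corollary) is exactly what makes the transference theorem applicable to any collection of $\h_S(k)$ color classes.

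Next I would run the transference theorem together with a padding argument to pass from $\h_S(k)$ to $N$ color classes. If $N=\h_S(k)$ there is nothing to do. If $N>\h_S(k)$, assume for contradiction that $|\bigcap\F_i\cap S|<k$ for every $i\in\{1,\dots,N\}$. In particular this holds for $\F_1,\dots,\F_{\h_S(k)}$, so the contrapositive of the transference theorem, applied to these families, produces a rainbow selection $\mathcal{G}$ among $\F_1,\dots,\F_{\h_S(k)}$ with $|\bigcap\mathcal{G}\cap S|<k$. Choosing an arbitrary member from each of the remaining families $\F_{\h_S(k)+1},\dots,\F_N$ extends $\mathcal{G}$ to a rainbow subfamily $\mathcal{G}'$ of the whole collection, and since $\bigcap\mathcal{G}'\subseteq\bigcap\mathcal{G}$ we get $|\bigcap\mathcal{G}'\cap S|<k$, contradicting the hypothesis that every rainbow subfamily meets $S$ in at least $k$ points. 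Hence some $\F_i$ satisfies $|\bigcap\F_i\cap S|\ge k$.

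The main difficulty is not in this corollary, which is essentially a repackaging of the two cited results, but upstream: the proof of Theorem~\ref{quantitative-discrete-doignon} (where the bound $\left(2^{m+1}k+1\right)^r$ on $\h_S(k)$ is established), and the verification that ``having at least $k$ points of $S$'' is orderable for $S=L\setminus(L_1\cup\dots\cup L_m)$ --- that is, exhibiting the direction $v$ and the containment-minimal $v$-semispaces demanded by the colorable-property machinery. If one wanted to prove the corollary from first principles, that orderability check, which genuinely uses the arithmetic structure of $S$, would be the crux.
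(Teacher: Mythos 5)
Your proposal is correct and follows the same route as the paper, which derives Corollary~\ref{aquantitative-discrete-doignon} immediately from the general colorful transference theorem for monotone, orderable properties together with the bound $\h_S(k)\le\left(2^{m+1}k+1\right)^r$ from Theorem~\ref{quantitative-discrete-doignon}. Your padding step reconciling $\h_S(k)$ color classes with $N$ of them is a detail the paper leaves implicit, and you handle it correctly.
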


\subsection*{Tverberg-type contributions}

Helge Tverberg proved his classic theorem in 1966  \cite{Tverberg:1966tb}. Later in 1981 he published another proof  \cite{Tverberg1981}, and simpler proofs have since appeared in  \cite{baranyonn-colorfulLP}, \cite{Sarkaria:1992vt},  and \cite{Roudneff}.  Chapter \cite[\S8.3]{Mbook} and the expository article \cite{3nziegler} can give the reader a sense of the abundance of work surrounding this lovely theorem. Here we present the first quantitative versions, in both continuous and discrete settings.

%

First,  we prove a version of Tverberg's theorem where each convex hull must contain a Euclidean ball of given radius.  In other words, we measure the ``size'' of $\cap_{i=1}^m \conv A_i$ by the inradius. Our proof combines Tverberg's theorem with our two versions of quantitative Steinitz' theorem for balls, Theorem \ref{theorem-colored-steinitz} and Proposition \ref{corollary-steinitz-for-balls}. The constant $n^{\operatorname{bm}}(d,\ep) \sim e^{-d/2}$ will be made explicit in Definition \ref{banach-mazur-value}. 

Note that, unlike the classical Tverberg theorem, some conditions must be imposed on the set of points to be able to obtain such a result.  For instance, regardless of how many points we start with, if they are all close enough to some flat of positive co-dimension, then all hopes of a continuous quantitative version of Tverberg's theorem quickly vanish.
In order to avoid the degenerate cases, we make the natural assumption that the set of points is ``thick enough''.

\begin{theorem}[Continuous quantitative Tverberg] \label{theorem-tverberg-volume}
Let $n=(2dm-1)(d+1)+1$ and $T_1, T_2, \ldots, T_n$ be subsets of $\R^d$ such that the convex hull of each $T_i$ contains an Euclidean ball of radius one, $B_1(c_i)$.  
Then, we can choose points $t_1 \in T_1, t_2 \in T_2, \ldots, t_n \in T_n$ and a partition of $\{ t_1, t_2, \ldots, t_n\}$ into $m$ sets $A_1, A_2, \ldots, A_m$ such that the intersection
\[
\bigcap_{i=1}^m \conv{A_i}
\]
contains a ball of radius $d^{-2(d+1)}$.

Moreover, if  we instead take $n'= n^{\operatorname{bm}}(d, \ep)$ and we let
\[
n= (m\cdot [(n'-1)d+1]-1)(d+1)+1,
\]
then we can guarantee that $\bigcap_{i=1}^m \conv A_i$ contains a ball or radius $(1+\ep)^{-1}$.
\end{theorem}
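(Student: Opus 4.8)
The plan is to apply the classical Tverberg theorem to the \emph{centers} $c_1,\dots,c_n$ of the prescribed unit balls, and then run the colorful quantitative Steinitz theorems on the blocks of the resulting Tverberg partition. Feed $c_1,\dots,c_n$ into Tverberg's theorem: since $n=(d+1)(2dm-1)+1$, this produces a partition of $\{1,\dots,n\}$ into $M=2dm$ blocks $I_1,\dots,I_M$ and a point $p$ with $p\in\conv\{c_i:i\in I_\ell\}$ for every $\ell$ (each block is nonempty, since its convex hull contains $p$). The one real observation is that $p$ survives as a common \emph{center}: if $p=\sum_{i\in I_\ell}\lambda_i c_i$ with $\lambda_i\ge 0$ and $\sum\lambda_i=1$, then any $q$ with $|q-p|\le 1$ satisfies $q=\sum_{i\in I_\ell}\lambda_i\bigl(c_i+(q-p)\bigr)$, and each summand $c_i+(q-p)$ lies in $B_1(c_i)\subseteq\conv T_i$. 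Hence the pairwise disjoint ``meta-sets'' $\widetilde T_\ell:=\bigcup_{i\in I_\ell}T_i$ satisfy $B_1(p)\subseteq\conv\widetilde T_\ell$ for all $\ell$.

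Next I would group the $2dm$ meta-sets into $m$ blocks of $2d$ consecutive meta-sets each, and apply Theorem \ref{theorem-colored-steinitz} (which is translation invariant, so we may center the small ball at $p$) inside each block. For the $j$-th block this selects one point from each of its $2d$ meta-sets whose convex hull contains $B_{r(d)}(p)$ with $r(d)\ge d^{-2d-2}=d^{-2(d+1)}$; call this $2d$-point set $A_j'$. Since the meta-sets are pairwise disjoint and the blocks are disjoint, the sets $A_1',\dots,A_m'$ are pairwise disjoint; each is drawn (one point apiece) from $2d$ distinct original sets $T_i$, and $\bigcap_{j=1}^m\conv A_j'\supseteq B_{d^{-2(d+1)}}(p)$. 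To meet the requirement of choosing a point from \emph{every} $T_i$, for each index $i$ not yet used pick an arbitrary $t_i\in T_i$ and throw it into the unique $A_j'$ whose block of meta-sets contains the $\widetilde T_\ell$ with $i\in I_\ell$; enlarging the sets only enlarges their hulls. The resulting sets $A_1,\dots,A_m$ partition $\{t_1,\dots,t_n\}$ and still have $\bigcap_{j=1}^m\conv A_j\supseteq B_{d^{-2(d+1)}}(p)$, which is the first assertion.

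For the ``moreover'' part the argument is word for word the same, with only the bookkeeping changed: with $n'=n^{\operatorname{bm}}(d,\ep)$ and $n=\bigl(m[(n'-1)d+1]-1\bigr)(d+1)+1$, Tverberg on the centers yields $M=m[(n'-1)d+1]$ blocks, which I would group into $m$ blocks of $(n'-1)d+1$ meta-sets each, and I would then invoke Proposition \ref{corollary-steinitz-for-balls} in place of Theorem \ref{theorem-colored-steinitz} to obtain, inside each block, a colorful choice of points whose convex hull contains a ball of radius $(1+\ep)^{-1}$ centered at $p$. Intersecting over $j$ and padding as before gives the claimed partition.

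The main obstacle is essentially conceptual rather than technical: realizing that Tverberg should be applied to the centers, and that the Minkowski-combination identity $q=\sum_{i\in I_\ell}\lambda_i(c_i+(q-p))$ is exactly what lets the common ball $B_1(p)$ pass from the centers to the disjoint meta-sets, so that the quantitative Steinitz theorems can then be applied with one common center. Everything after that — the grouping of $2dm$ (resp.\ $m[(n'-1)d+1]$) Tverberg blocks into $m$ super-blocks, and the padding step that turns the $2dm$ (resp.\ $m[(n'-1)d+1]$) Steinitz-selected points into a genuine partition of all $n$ chosen points — is routine orchestration, though one should be careful to check that the padded $A_j$ are indeed pairwise disjoint and exhaust $\{t_1,\dots,t_n\}$.
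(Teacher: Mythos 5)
Your proposal is correct and follows essentially the same route as the paper's proof: apply classical Tverberg to the centers, observe via the Minkowski-sum identity that $B_1(p)$ is contained in the convex hull of each meta-set $\bigcup_{i\in I_\ell}T_i$, and then apply Theorem~\ref{theorem-colored-steinitz} (resp. Proposition~\ref{corollary-steinitz-for-balls}) blockwise after grouping the Tverberg parts into $m$ super-blocks. The only addition is your explicit padding step to make the chosen points a genuine partition of all of $\{t_1,\dots,t_n\}$, a detail the paper leaves implicit (its remark notes that only $2dm$ of the sets $T_i$ are actually used).
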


As with Helly's and Carath\'eodory's theorems, there are colorful versions of Tverberg's theorem.  In this case, the aim is to impose additional combinatorial conditions on the resulting partition of points, while guaranteeing the existence of a partition where the convex hulls of the parts intersect. Now that the conjectured topological versions of Tverberg's theorem have been proven false \cite{frick15}, the following conjecture by B\'ar\'any and Larman is arguably the most important open problem surrounding Tverberg's theorem.

\begin{conjecture}[B\'ar\'any and Larman, 1992 \cite{Barany:1992tx}]\label{conjecture-colorful-Tverberg}
Let $F_1, F_2, \ldots, F_{d+1} \subset \R^d$ be sets of $m$ points each, considered as color classes.  Then, there is a colorful partition of them into sets $A_1, \ldots, A_{m}$ whose convex hulls intersect.
\end{conjecture}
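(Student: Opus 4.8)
Since Conjecture \ref{conjecture-colorful-Tverberg} is a well-known open problem, what follows is a strategy rather than a proof, arranged so as to isolate exactly where the difficulty sits. I would first dispose of the accessible ranges so that the heavy machinery is only asked to handle the genuinely hard regime. For $d=1$ the statement is an elementary sorting argument: with $m$ points of each of the two colors, pairing the $i$-th smallest point of color $1$ with the $(m+1-i)$-th smallest point of color $2$ produces $m$ rainbow segments sharing a common point. For $m=2$ it is a colored Radon theorem -- given $d+1$ pairs of points in $\R^d$, a rainbow $2$-partition with intersecting hulls exists -- which follows from B\'ar\'any's colorful Carath\'eodory theorem (quoted above) after a generic lift to $\R^{d+1}$, or from a direct linear-algebra argument on the $(d+1)\times(d+1)$ matrix of within-pair differences. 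The case $d=2$ for all $m$ is the original theorem of B\'ar\'any and Larman, proved by a degree/winding-number argument, which I would reprove in a form whose ingredients might survive a higher-dimensional generalization. After these reductions the substantive range is $d\ge 3$, and within it the split between ``$m$ a prime power'' and ``$m$ with two distinct prime factors'' will be the dividing line for the method, mirroring the topological Tverberg situation.

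For $d\ge 3$ the plan is the standard configuration-space/test-map scheme, in the colored form of Vre\'cica--\v Zivaljevi\'c. One models the colorful $m$-partitions of $F_1,\ldots,F_{d+1}$ by a join of chessboard-type complexes, one factor per color class, carrying the natural action of the symmetric group $S_m$ that permutes the $m$ parts; call this space $\mathcal{X}$. There is an $S_m$-equivariant test map $f\colon\mathcal{X}\to W$, where $W$ is a suitable multiple of the reduced regular representation of $S_m$, whose zero set is precisely the set of colorful partitions whose $m$ convex hulls meet in a common point. The conjecture then reduces to the assertion that no $S_m$-equivariant map $\mathcal{X}\to S(W)$ exists. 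Establishing this needs two inputs: a lower bound on the connectivity of $\mathcal{X}$ (from the Bj\"orner--Lov\'asz--Vre\'cica--\v Zivaljevi\'c connectivity estimates for chessboard complexes) and an equivariant Borsuk--Ulam-type nonexistence statement. One would try to tune the combinatorial model -- replacing some factors by deleted joins, or thickening the color classes and then deleting points -- so that the connectivity of $\mathcal{X}$ just exceeds $\dim S(W)$; this is already delicate, since for color classes of size exactly $m$ the bare chessboard-complex connectivity is, tellingly, too weak.

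The crux, however, is the equivariant step. When $m=p^k$ one restricts the $S_m$-action to an elementary abelian (or cyclic, for $p$ prime) $p$-subgroup acting freely on $S(W)$ and invokes Dold's theorem or Volovikov's index argument; this is the engine behind the known partial results, and behind the closely related B\'ar\'any--Larman-type theorem with enlarged color classes settled for $m$ prime by Blagojevi\'c--Matschke--Ziegler. For $m$ with two distinct prime factors no such free subgroup action is available, and there is in fact no equivariant obstruction of this type -- precisely the phenomenon that, via the constraint method, yielded the counterexamples to the topological Tverberg conjecture cited above. Thus the statement is not known to be false, but the CS/TM toolkit as it stands cannot prove it. The realistic avenues are: (i) exploiting the extra rigidity of the colorful constraint (each part has size exactly $d+1$, so $\mathcal{X}$ is far smaller than in the uncolored problem) to run a direct degree computation generalizing the $d=2$ case; or (ii) using the colorful Carath\'eodory and colorful Steinitz theorems of this paper to pin down most of the chosen points in advance, reducing to a low-dimensional residual slice handled by hand. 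I expect none of these to be routine: the main obstacle is, and will remain, the absence of an equivariant obstruction for $m$ not a prime power, and a complete proof will almost certainly require a genuinely new idea beyond the present topological technology.
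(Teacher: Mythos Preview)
The statement you were asked to address is a \emph{conjecture}, and the paper does not prove it; it is presented explicitly as ``arguably the most important open problem surrounding Tverberg's theorem.'' There is therefore no proof in the paper against which to compare your proposal. You correctly recognized this and offered a survey of partial results and obstructions rather than a purported proof, which is the appropriate response.

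Your summary of the known cases largely matches the paper's own discussion: $d=2$ for all $m$ (B\'ar\'any--Larman), $m=2$ for all $d$ (Lov\'asz), and the topological result of Blagojevi\'c--Matschke--Ziegler. One small inaccuracy worth correcting: the paper records the BMZ result as holding when $m+1$ is prime, not when $m$ itself is prime or a prime power; your phrasing drifts between these. Your account of the configuration-space/test-map scheme and of why the equivariant obstruction fails for $m$ with two distinct prime factors is a fair reflection of the state of the art, but of course none of it constitutes a proof, and you are right to say so.
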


By a colorful partition $A_1, \ldots, A_m$ we mean that it satisfies $|A_i \cap F_j| = 1$ for all $i,j$. In presenting the conjecture, B\'ar\'any and Larman showed that it holds for $d=2$ and any $m$, and included a proof by L\'ov\'asz for $m=2$ and any $d$.  Recently, Blagojevi\'c, Matschke, and Ziegler \cite{blago3, bmz-optimal} showed that it is also true for the case when $m+1$ is a prime number and any $d$. The reason for these conditions on the parameters of the problem is that their method of proof uses topological machinery requiring these assumptions. However, their result shows that if we allow each $F_i$ to have $2m-1$ points instead of $m$, we can find $m$ pairwise disjoint colorful sets whose convex hulls intersect, without any conditions on $m$.  For variations of conjecture \ref{conjecture-colorful-Tverberg} that do imply Tverberg's theorem, see \cite{blago3, bmz-optimal, Soberon:2013fr}.


Combining results of Blagojevi\'c, Matschke, and Ziegler with our two colorful Steinitz theorems, we can obtain volumetric versions of these results similar to Theorem \ref{theorem-tverberg-volume}.  In order to obtain a ball in the intersection, this time we must also allow each $A_i$ to have more points of each color class.  For an integer $q$, let $\lceil q \rceil_p$ the smallest prime which is greater than or equal to $q$.  Then

\begin{theorem}[Colorful continuous quantitative Tverberg] \label{theorem-colorful-tverberg-volume}

Let $n=\lceil 2md+1\rceil_p -1$ and $F_1, F_2, \ldots, F_{d+1}$ be families of $n$ sets of points of $\R^d$ each.  We consider the families $F_i = \{T_{i,j}: 1 \le j \le n\}$ as the color classes.  Suppose that $\conv (T_{i,j})$ contains a ball of radius $1$ for all $i,j$.  Then, there is a choice of points $t_{i,j} \in T_{i,j}$ and a partition of the resulting set into $m$ parts $A_1, \ldots, A_{m}$ such that each $A_i$ contains at most $2d$ points of each color class and $\bigcap_{i=1}^m \conv (A_i)$ contains a ball of radius $d^{-2(d+1)}$.
	
In addition, if we take instead $n'= n^{\operatorname{bm}}(d,\ep)$ and 
\[
n=\lceil m\cdot ((n'-1)d+1)+1\rceil_p -1,
\]
and allow each $A_i$ to have $(n'-1)d+1$ points of each color, then in the conclusion we can guarantee that $\bigcap_{i=1}^m \conv (A_i)$ contains a ball of radius $(1+\ep)^{-1}$.
\end{theorem}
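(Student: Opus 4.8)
The plan is to mimic the proof of the monochromatic continuous quantitative Tverberg theorem (Theorem \ref{theorem-tverberg-volume}), replacing the use of ordinary Tverberg's theorem with the colorful Tverberg-type result of Blagojevi\'c, Matschke, and Ziegler, and replacing the quantitative Steinitz step with our colorful quantitative Steinitz theorems for balls (Theorem \ref{theorem-colored-steinitz} and Proposition \ref{corollary-steinitz-for-balls}). First I would address the first statement. We are given, for each color class $F_i$ ($1 \le i \le d+1$), a family of $n = \lceil 2md+1\rceil_p - 1$ sets $T_{i,j}$ whose convex hulls each contain a unit ball. The idea is to first \emph{select} from each $T_{i,j}$ a small colorful witness set using Theorem \ref{theorem-colored-steinitz}, and then apply the colorful Tverberg machinery to the selected points. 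Concretely, for each $i,j$, we will eventually want to pick $2d$ points $x_{i,j}^{(1)}, \ldots, x_{i,j}^{(2d)} \in T_{i,j}$ whose convex hull contains a ball of radius $d^{-2d-2}$ around the appropriate center; but to do this coherently across the Tverberg partition we should instead think of the selection as being driven by the output of the colorful Tverberg theorem.

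So the key steps, in order, are as follows. \textbf{Step 1 (set up a new point configuration).} Translate so that all the relevant inradius-one balls are handled uniformly; since the centers $c_i$ of the unit balls in $\conv(T_{i,j})$ need not coincide, one first observes (as in the proof of Theorem \ref{theorem-tverberg-volume}) that by an averaging/centering argument one may reduce to a common center, at the cost of a controlled loss absorbed into the $d^{-2(d+1)}$ bound. \textbf{Step 2 (apply colorful Tverberg).} With $n+1 = \lceil 2md+1\rceil_p$ prime, the Blagojevi\'c--Matschke--Ziegler theorem in the ``$2m-1$ points per color'' form (stated in the excerpt) guarantees, for $d+1$ color classes each of size $n+1 \ge 2(md)+1 \ge 2\lceil \cdot \rceil - 1$... here one must be slightly careful with the counting: we have $d+1$ colors, and we wish to produce $m$ pairwise disjoint colorful sets, each taking at most $2d$ points of each color, whose convex hulls intersect. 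This is exactly the regime their prime-power colorful Tverberg result covers once the number of points per color is at least $md \cdot$ (something), which is why $n$ is chosen as $\lceil 2md+1 \rceil_p - 1$. \textbf{Step 3 (quantitative upgrade via colorful Steinitz).} Each part $A_\ell$ of the colorful partition, before selection, is really a bundle of sets: it designates, for certain pairs $(i,j)$, that we must pick a point of $T_{i,j}$ into $A_\ell$. We arrange that each $A_\ell$ gets contributions from at least $2d$ of the sets $T_{i,j}$ lying in a configuration to which Theorem \ref{theorem-colored-steinitz} applies (i.e. $2d$ sets each of whose convex hull contains a unit ball about the common center). Then Theorem \ref{theorem-colored-steinitz} lets us choose one point from each of these $2d$ sets so that the convex hull of the chosen colorful $2d$-tuple contains $B_{d^{-2d-2}}(0)$. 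Doing this for every $\ell = 1, \ldots, m$ and taking the union of the selected points over all parts produces the desired $t_{i,j}$ and the partition $A_1, \ldots, A_m$; since every $\conv(A_\ell)$ then contains $B_{d^{-2d-2}}(0)$, their intersection does too. A small bookkeeping check gives that each $A_\ell$ uses at most $2d$ points of each color, matching the statement's claim, and the radius $d^{-2(d+1)} = d^{-2d-2}$ is exactly the Steinitz constant.

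\textbf{Step 4 (the ``in addition'' part).} For the $(1+\ep)^{-1}$ version, the only change is to replace Theorem \ref{theorem-colored-steinitz} by Proposition \ref{corollary-steinitz-for-balls}, which (in the spirit of Theorem \ref{theorem-thrifty-steinitz} and Definition \ref{banach-mazur-value}) says that from $(n'-1)d+1$ sets whose convex hulls contain a unit ball we may choose one point from each so that the convex hull of the selected colorful tuple contains a ball of radius $(1+\ep)^{-1}$, where $n' = n^{\operatorname{bm}}(d,\ep)$. To feed this into the colorful Tverberg step we now need each part $A_\ell$ to receive contributions from $(n'-1)d+1$ of the sets $T_{i,j}$; counting across $d+1$ colors and $m$ parts, the total number of sets per color must be at least $m\cdot((n'-1)d+1)$, and primality of $n+1$ forces $n = \lceil m\cdot((n'-1)d+1)+1 \rceil_p - 1$, exactly as in the statement. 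Running Steps 1--3 verbatim with these parameters finishes the proof.

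\textbf{Main obstacle.} The genuine difficulty is Step 2 together with the interface in Step 3: one must carefully phrase the colorful Tverberg theorem of Blagojevi\'c--Matschke--Ziegler so that it not only produces a colorful partition but does so while giving each part enough \emph{distinct} source sets $T_{i,j}$ (as opposed to enough points) to make the colorful Steinitz lemma applicable, and one must verify that the ``at most $2d$ points of each color'' (resp. ``$(n'-1)d+1$ points of each color'') budget is simultaneously respected. Getting the exact constant $n = \lceil 2md+1\rceil_p - 1$ to drop out of this counting — rather than some weaker bound — is the delicate part; everything else (the centering reduction in Step 1, the final intersection argument, and the substitution of Proposition \ref{corollary-steinitz-for-balls} in Step 4) is routine once the framework of the proof of Theorem \ref{theorem-tverberg-volume} is in hand.
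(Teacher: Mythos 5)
Your overall skeleton (Blagojevi\'c--Matschke--Ziegler colorful Tverberg plus the colorful Steinitz results, with Proposition \ref{corollary-steinitz-for-balls} substituted for the $(1+\ep)^{-1}$ version) matches the paper, but your Step 1 contains a genuine gap, and it sits exactly where the content of the proof lies. There is no ``averaging/centering argument'' that reduces to a common center of the unit balls: the sets $T_{i,j}$ and their inballs sit wherever they sit, translating them would change the configuration whose partition you must produce, and there is no room in the conclusion to absorb a loss --- indeed you yourself note later that $d^{-2(d+1)}$ is exactly the Steinitz constant, contradicting the claim that part of it pays for a centering step. What replaces this impossible reduction in the paper is the one idea your writeup never states: apply the colorful Tverberg theorem \emph{to the centers} $c_{i,j}$ of the unit balls (colored by $i$). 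Since $n+1=\lceil 2md+1\rceil_p$ is prime, BMZ gives a colorful partition of the centers into $n\ge 2dm$ parts whose convex hulls share a point $p$. Group $2dm$ of these parts into $m$ blocks of $2d$ parts each (discarding the rest), and for each part $C$ in a block set $\tilde{C}=\bigcup\{T_{i,j}: c_{i,j}\in C\}$; then
\[
\conv(\tilde{C})\supset \conv(C)+B_1(0)\supset p+B_1(0),
\]
so each block consists of $2d$ sets whose convex hulls all contain the \emph{same} unit ball $B_1(p)$, which is precisely the hypothesis of Theorem \ref{theorem-colored-steinitz}. The common center is produced by the Tverberg point, not assumed.

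Once this is in place, the ``delicate counting'' you flag as the main obstacle evaporates: each part of the BMZ partition contains exactly one center per color, the Steinitz step selects exactly one point from each $\tilde{C}$, hence each $A_\ell$ consists of $2d$ points in total and trivially has at most $2d$ points of each color; no careful rephrasing of BMZ is needed beyond the B\'ar\'any--Larman form with $n$ parts, $n+1$ prime. Your Step 2 as written (the ``$2m-1$ points per color'' form, with the inequality chain left unfinished) does not identify which point set the theorem is applied to, and your opening idea of selecting Steinitz witnesses first and then running colorful Tverberg on the selected points is rightly abandoned but never replaced by a working mechanism. Step 4 is fine: with blocks of size $(n'-1)d+1$ and $n=\lceil m\cdot((n'-1)d+1)+1\rceil_p-1$, Proposition \ref{corollary-steinitz-for-balls} applied around the same Tverberg point $p$ gives the $(1+\ep)^{-1}$ conclusion, exactly as in the second half of Theorem \ref{theorem-tverberg-volume}.
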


The reason why we require the use of $\lceil q \rceil_p$ is the conditions for the known cases of Conjecture \ref{conjecture-colorful-Tverberg}.  If Conjecture \ref{conjecture-colorful-Tverberg} were proved, we could use $2dm$ sets in each color class instead. However, since the prime number theorem implies $\lim_{q \to \infty} \frac{\lceil q \rceil_p}{q} =1$ and in the small cases we have $\lceil q \rceil_p < 2q$, the result above is almost as good. We should note that the ``optimal colorful Tverberg'' by Blagojevi\'c, Matschke, and Ziegler \cite[Theorem 2.1]{bmz-optimal} also admits a volumetric version as above, with essentially the same proof. 

If all $T_{i,j}$ are equal to $B_1(0)$, the need to allow each $A_i$ to have more points from each color class becomes apparent from the results of inaproximability of the sphere by polytopes with few vertices \cite{bronstein2008approximation}.  The condition we have is saying that the number of points from $F_j$ in $A_i$ should not exceed $\frac{1}{m}|F_j|$.  We know that a subset of $B_1(0)$ that contains $B_{1-\ep}(0)$ should have at least $n^{\operatorname{bm}}(d,\ep)$ points, showing that the number of points we are allowing to take from each color class is optimal up to a multiplicative factor of $\sim d^2$.

To explain our next contributions we begin by remarking that
traditionally Tverberg's theorem considers intersections over $\R^d$.
Here we will be interested in a Tverberg number,
where the points are in $S \subset \R^d$ and intersections of the convex hulls of the partition sets are required to have non-empty intersection with $S$. More precisely, we make the following definition.

\begin{definition}
Given a set $S\subset \R^d$, the \emph{$S$-Tverberg number} $\tv_{S}(m)$ (if it exists) is the smallest positive integer such that among any $\tv_{S}(m)$ distinct points in $S\subseteq\R^d$, there is a partition of them into $m$ sets $A_1,A_2,\dots,A_m$ such that the intersection of their convex hulls contains some point of $S$.
\end{definition}

For example, when $S=\Z^d$, we wish to have 
enough lattice points to
be partitioned into $m$ sets whose convex hulls' intersection contains
a lattice point. It was previously known that $2^d (m-1) < \tv_{\Z^d}(m) \le (m-1)(d+1)2^d - d - 2$. These bounds are mentioned by 
Eckhoff \cite{Eckhoff:2000jw}.  The upper bound follows by combining a theorem of Jamison for general convexity spaces \cite{Jamison:1981wz} with \cite{Doi1973}. We improved this bound
in this paper (see Corollary \ref{corollary:integer-tverberg} below).

In quantitative discrete theorems we wish to enumerate points. Counting points in a lattice is natural, but not in a dense set such as $S=({\mathbb Q}[\sqrt{2}])^d$. Here we go beyond lattices and consider more sophisticated discrete subsets $S$ of $\R^d$. We begin with the following definition.

\begin{definition}
Given a discrete subset $S$ of $\R^d$, the \emph{quantitative $S$-Tverberg number} $\tv_{S}(m,k)$ (if it exists) is the smallest positive integer such that among any $\tv_{S}(m,k)$ distinct points in $S\subseteq\R^d$, there is a partition of them into $m$ sets $A_1,A_2,\dots,A_m$ such that the intersection of their convex hulls contains at least $k$ points of $S$.
\end{definition}

Note that the definition of the \emph{quantitative} $S$-Tverberg number makes sense only when $S$ is discrete.
Now we present the first quantitative discrete Tverberg theorem.

\begin{theorem}[Discrete quantitative Tverberg] \label{thm:quantitative-disc-tverberg}
Let $S\subseteq \R^d$  with finite quantitative Helly number $\h_S(k)$. Let $m,k$ be integers with $m,k\ge 1$. Then, we have
\[
\tv_S(m,k)\leq \h_S(k)(m-1)kd+k.
\]
\end{theorem}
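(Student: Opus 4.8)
The plan is to mimic the classical derivation of Tverberg-type bounds from Helly numbers — the Jamison-style argument — but adapted so that we track the quantitative parameter $k$. Suppose we are given $N = \h_S(k)(m-1)kd + k$ distinct points of $S$, and suppose, for contradiction, that no partition into $m$ parts has the property that the intersection of the convex hulls of the parts contains at least $k$ points of $S$. The first step is to set up an optimization over partitions: among all ordered partitions $(A_1,\dots,A_m)$ of the $N$ points into (possibly empty) parts, consider those for which $\bigcap_{i=1}^m \conv A_i$ contains as many points of $S$ as possible, and among those pick one, say with the parts having sizes $n_1 \ge n_2 \ge \cdots \ge n_m$, that is extremal with respect to a secondary lexicographic criterion (e.g. minimizing $\sum n_i^2$, or making the sizes as balanced as possible). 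By hypothesis this maximum number of common $S$-points is some $j \le k-1$.

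Next I would argue that in such an extremal partition every part must be "small". The intuition is standard: if one part, say $A_1$, were too large relative to the others, we could move a point from $A_1$ into some other part $A_\ell$ (or create structure) without decreasing $|\bigcap \conv A_i \cap S|$, contradicting the secondary extremality — here one uses Carath\'eodory's theorem (or the discrete quantitative Carath\'eodory result, Theorem \ref{theorem-quantitative-discrete-caratheodory}) to see that the common $S$-points are already supported by few points of each part. Concretely, the goal is to show each $|A_i| \le \h_S(k)\cdot(\text{something}) $ so that summing over the $m$ parts yields $N < \h_S(k)(m-1)kd + k$, the desired contradiction. The role of the factor $kd$ is that to "certify" $k$ points of $S$ in the intersection one needs, by Carath\'eodory applied $k$ times in $\R^d$, at most $kd$ points in each part actively contributing (plus lower-order terms), and the role of $\h_S(k)$ is that it is exactly the Helly-type threshold controlling when a family of convex sets (here, the convex hulls $\conv A_i$ together with translates/semispaces used to separate) can fail to have $k$ common $S$-points. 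The additive $+k$ absorbs the base case / the points needed to witness the $k$ points themselves in the degenerate situation $m=1$.

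The cleanest way to organize the inductive skeleton is to pass through the $S$-Helly number directly: if the intersection of all $m$ convex hulls has fewer than $k$ points of $S$, then by definition of $\h_S(k)$ there is a subfamily of at most $\h_S(k)$ of the sets $\{\conv A_i\}$ whose intersection already has fewer than $k$ points of $S$ — but since there are only $m$ sets, this is vacuous unless $m \le \h_S(k)$, so instead one must run the argument the other way: build the convex sets not as the $\conv A_i$ but as a larger auxiliary family (one convex set per point, or per "deleted" point), in the spirit of the proof that Tverberg numbers exist whenever Helly numbers do (Corollary \ref{corollary-simple-S-tverberg}), and then read off the quantitative bound. I would first recall that proof in the $j=0$ (i.e.\ ordinary $S$-Tverberg) case, then thread the counting of $k$ points through every step, replacing each invocation of "$\h_S$" by "$\h_S(k)$" and each single Carath\'eodory application by $k$ of them.

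The main obstacle I anticipate is the extremality/exchange argument: verifying rigorously that in a partition maximizing $|\bigcap_i \conv A_i \cap S|$ (with the right tie-breaker) no part can be large. One has to rule out that moving a point out of a big part destroys some of the $j$ common $S$-points, and this is exactly where Carath\'eodory's theorem — or its colorful/discrete quantitative strengthening — is needed to guarantee that only $\le d$ (resp.\ $\le kd$) points of each part are "load-bearing," so the remaining points are free to be redistributed. Getting the bookkeeping so that the free points, summed over $m-1$ of the parts, exceed the claimed threshold — and handling the edge cases $m=1$ and empty parts, which is where the clean $+k$ term comes from — is the delicate part; everything else is a routine translation of the known qualitative argument into the quantitative setting.
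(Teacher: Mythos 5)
The core of your proposal is a plan rather than a proof, and the plan is missing precisely the idea that makes the theorem work. Your primary route (fix a partition maximizing $|\bigcap_i \conv A_i \cap S|$ and run a Jamison-style exchange argument) is left unverified by your own admission, and your fallback (apply $\h_S(k)$ to ``a larger auxiliary family, one convex set per point or per deleted point'') is never actually specified --- but that auxiliary family is the whole proof. The paper's argument: given $A \subseteq S$ with $\h_S(k)(m-1)kd+k$ points, let $\mathcal F$ be the family of \emph{all} subsets $F \subset A$ of size $(\h_S(k)-1)(m-1)kd+k$. Any $\h_S(k)$ members of $\mathcal F$ jointly omit at most $\h_S(k)(m-1)kd$ points of $A$, so by pure counting their intersection already contains at least $k$ points of $S$; the definition of $\h_S(k)$ then yields a $k$-point set $P \subset S$ contained in $\conv F$ for \emph{every} $F \in \mathcal F$. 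Equivalently, $P$ has depth at least $(m-1)kd+1$ with respect to $A$ (a halfspace containing a point of $P$ but at most $(m-1)kd$ points of $A$ would leave some $F$ entirely on the other side). From this depth statement the partition is built greedily: Theorem \ref{theorem-quantitative-discrete-caratheodory} (plain Carath\'eodory when $k=1$) shrinks each extracted part to at most $kd$ points with $P$ still in its hull, so deleting $m-1$ parts costs at most $(m-1)kd$ depth and the next part always exists. No extremal partition, no tie-breaking, no exchange step is needed; the step you flag as ``the delicate part'' simply does not occur, and nothing in your writeup supplies either the counting family $\mathcal F$ or the depth claim that replaces it.

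There is also a structural problem with the fallback as you describe it. Corollary \ref{corollary-simple-S-tverberg} is \emph{deduced from} Theorem \ref{thm:quantitative-disc-tverberg} in this paper, so there is no independent ``$k=1$ proof'' there to thread the counting through; the independent argument you are implicitly invoking is Jamison's, and that route is exactly what the paper is improving on --- for $S=\Z^d$ it gives $\tv_{\Z^d}(m)\le (m-1)(d+1)2^d-d-2$, with a factor $(d+1)$ where the theorem (see Corollary \ref{corollary:integer-tverberg}) achieves $d$. So ``replacing each $\h_S$ by $\h_S(k)$ and each Carath\'eodory application by $k$ of them'' in that argument cannot be expected to yield the claimed bound $\h_S(k)(m-1)kd+k$. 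Your two structural observations --- that only about $kd$ points of each part are load-bearing, and that applying $\h_S(k)$ directly to the $m$ hulls is vacuous --- are correct and are indeed used, but without the family $\mathcal F$ and the resulting depth argument the proof does not close.
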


This theorem produces many fascinating corollaries, 
of which we list only a few that follow directly. 
First, using the quantitative Helly theorem for $\Z^d$ in \cite{alievetal,ipcoversion}, we obtain the following.

\begin{corollary}[Discrete quantitative Tverberg over $\Z^d$]  Set $c(d,k)=\lceil 2(k+1)/3\rceil 2^d-2\lceil 2(k+1)/3\rceil+2$. The quantitative Tverberg number of the
integer lattice $\Z^d$ exists and is bounded by
\[
\tv_{\Z^d}(m,k)\leq c(d,k) (m-1)kd+k.
\]
Therefore, any set of at least $\tv_{Z^d}(m,k)$ many integer  lattice points can be partitioned into $m$ disjoint subsets such that their convex hulls intersect in at least $k$ lattice points.
\end{corollary}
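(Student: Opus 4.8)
The plan is to obtain this as an immediate specialization of Theorem~\ref{thm:quantitative-disc-tverberg} to $S=\Z^d$. Recall that theorem gives $\tv_S(m,k)\le \h_S(k)(m-1)kd+k$ whenever the quantitative $S$-Helly number $\h_S(k)$ is finite, and the right-hand side is monotone nondecreasing in $\h_S(k)$. Hence it suffices to supply the single numerical input $\h_{\Z^d}(k)\le c(d,k)$, with $c(d,k)=\lceil 2(k+1)/3\rceil 2^d-2\lceil 2(k+1)/3\rceil+2$; everything else is substitution.

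That bound is the content of the quantitative Helly theorem of Aliev, De~Loera and Louveaux quoted above (with the constant sharpened in \cite{alievetal}), once its ``witness subfamily'' phrasing is matched to the ``quantitative Helly number'' phrasing used here. Concretely, one argues by contraposition: if a finite family $\F$ of convex sets satisfies $|\bigcap\F\cap\Z^d|=j<k$, then, adjoining if necessary a cube large enough to contain the finitely many points of $\bigcap\F\cap\Z^d$ (which leaves the lattice points of the intersection unchanged and makes some member bounded), the cited result applied with parameter $j$ when $j\ge 1$, or Doignon's theorem when $j=0$, produces a subfamily of size at most $\max\{c(d,j),2^d\}\le c(d,k)$ whose intersection still meets $\Z^d$ in exactly $j<k$ points. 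Thus checking all subfamilies of size at most $c(d,k)$ certifies $|\bigcap\F\cap\Z^d|\ge k$, i.e. $\h_{\Z^d}(k)\le c(d,k)<\infty$.

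Plugging $\h_{\Z^d}(k)\le c(d,k)$ into Theorem~\ref{thm:quantitative-disc-tverberg} gives $\tv_{\Z^d}(m,k)\le c(d,k)(m-1)kd+k$, and finiteness of $\h_{\Z^d}(k)$ guarantees $\tv_{\Z^d}(m,k)$ is finite. The closing sentence is then just the definition of $\tv_{\Z^d}(m,k)$ unwound: any set of at least $c(d,k)(m-1)kd+k$ integer points contains $\tv_{\Z^d}(m,k)$ of them, so it splits into $m$ disjoint parts whose convex hulls share at least $k$ lattice points. The only step beyond bookkeeping is the contrapositive translation of the previous paragraph — in particular the truncation handling the boundedness hypothesis of the cited theorem — and this is routine precisely because only finitely many lattice points are ever relevant.
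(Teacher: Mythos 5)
Your route is the paper's route: the corollary is obtained by plugging the quantitative Helly bound $\h_{\Z^d}(k)\le c(d,k)$ from \cite{ipcoversion,alievetal} into Theorem \ref{thm:quantitative-disc-tverberg} with $S=\Z^d$, and the paper itself offers nothing beyond exactly this citation-plus-substitution, so in spirit you have reproduced its proof. The one place where your write-up goes beyond the paper --- translating the ``witness subfamily'' phrasing of the cited theorem into the $\h_{\Z^d}(k)$ phrasing --- has a small hole as written: after you adjoin the cube $C$ to satisfy the boundedness hypothesis, the subcollection of size at most $c(d,j)$ produced by the cited theorem is a subfamily of $\F\cup\{C\}$ and may well contain $C$; discarding $C$ can reintroduce lattice points, so what remains need not be a subfamily of $\F$ certifying fewer than $k$ lattice points, which is what the definition of $\h_{\Z^d}(k)$ requires. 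This is patchable by a routine limiting argument: run the construction for cubes $C_R$ with $R\to\infty$, note that $\F$ is finite so some fixed subfamily $\mathcal H\subseteq\F$ with $|\mathcal H|\le c(d,k)$ arises for infinitely many $R$ with $\bigcap\mathcal H\cap C_R\cap\Z^d$ equal to the same $j<k$ points, and let $R\to\infty$ to conclude $\bigl|\bigcap\mathcal H\cap\Z^d\bigr|=j<k$; alternatively, cite \cite{alievetal}, where the result is already stated in the Helly-number form you need. With that repair (and the easy monotonicity $c(d,j)\le c(d,k)$ and $2^d\le c(d,k)$, which you use implicitly), your argument is correct and matches the paper.
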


{\bf Remark:} As can be seen from the proof, the assumption that $S$ be discrete is not necessary in Theorem \ref{thm:quantitative-disc-tverberg}. However, when $k>1$, Theorem \ref{thm:quantitative-disc-tverberg} is most interesting in the case that $S$ is discrete, since we wish to count points of the intersection. However, in the case of $k=1$, we have no enumeration and care only about a non-empty intersection over $S$. We simply consider Tverberg's theorem with points of $S\subset \R^d$ and the $S$-Tverberg number $\tv_S(m)$. Our next corollary therefore holds for subsets $S$ of $\R^d$ as long as they have a Helly number.

\begin{corollary}[$S$-Tverberg number exists when the $S$-Helly number exists]\label{corollary-simple-S-tverberg}
Suppose that $S\subseteq\R^d$ is such that $\h_S$ exists. (In particular, $S$ need not be discrete.) Then, the $S$-Tverberg number exists too and satisfies
\[
\tv_S(m)\leq(m-1)d\cdot\h_S+1.
\]
\end{corollary}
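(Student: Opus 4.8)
# Proof proposal for Corollary \ref{corollary-simple-S-tverberg}

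The plan is to obtain this as the $k=1$ specialization of Theorem \ref{thm:quantitative-disc-tverberg}, but since we want to avoid the discreteness hypothesis, I would instead run the underlying argument directly with $k=1$. The starting point is the observation that for $k=1$ the quantitative $S$-Helly number $\h_S(1)$ coincides with the ordinary $S$-Helly number $\h_S$: the condition ``$\bigcap\mathcal G$ intersects $S$ in at least one point'' is just ``$\bigcap\mathcal G$ intersects $S$.'' So the bound $\tv_S(m,1)\le \h_S(1)(m-1)\cdot 1\cdot d+1 = (m-1)d\,\h_S+1$ is exactly what we want; I only need to check that the proof of Theorem \ref{thm:quantitative-disc-tverberg} in Section \ref{section-Tverberg}, when run with $k=1$, never uses discreteness of $S$. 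The Remark after that theorem already asserts this, so the real work is to point to it and spell out the $k=1$ case cleanly.

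Concretely, I would argue as follows. Let $N=(m-1)d\,\h_S+1$ and take any $N$ distinct points $x_1,\dots,x_N$ in $S$. Form the $(d+1)\times N$ matrix whose columns are the lifted points $(x_j,1)^T$; by a counting/linear-algebra argument (the same one underlying Tverberg-type proofs via Helly, going back to Jamison \cite{Jamison:1981wz} for convexity spaces), having more than $(m-1)\cdot(\text{something})$ points forces the existence of a partition into $m$ parts whose convex hulls share a common point — but here we must arrange that common point to lie in $S$. This is where the $S$-Helly number enters: one sets up, for each candidate partition, the family of convex sets $\{\conv A_i\}$, and uses that any $\h_S$ of the relevant convex sets built from the construction have a common point in $S$, hence all of them do. The bookkeeping that turns ``$N$ points'' into ``a partition exists'' is precisely the content of the proof of Theorem \ref{thm:quantitative-disc-tverberg}; I would simply invoke it verbatim with $k=1$ and note that every step is valid for an arbitrary $S\subseteq\R^d$ with $\h_S<\infty$, since no point-counting of intersections (the only place discreteness is needed) occurs when $k=1$.

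The one subtlety worth flagging is making sure the numerology is consistent: Theorem \ref{thm:quantitative-disc-tverberg} gives $\tv_S(m,k)\le \h_S(k)(m-1)kd+k$, and plugging $k=1$, $\h_S(1)=\h_S$ yields $(m-1)d\,\h_S+1$, matching the claimed bound. I expect the main (and only) obstacle to be presentational rather than mathematical: one must either (a) cite the Remark and the proof of Theorem \ref{thm:quantitative-disc-tverberg} and observe the discreteness hypothesis is vacuous when $k=1$, or (b) reproduce the short $k=1$ argument self-contained. Approach (a) is cleanest given that the theorem and its proof already appear in the paper, so I would write: ``This is immediate from Theorem \ref{thm:quantitative-disc-tverberg} with $k=1$, using that $\h_S(1)=\h_S$, together with the Remark following that theorem which notes the discreteness of $S$ is not needed.'' If a referee wants more, I would expand the $k=1$ case of the proof in Section \ref{section-Tverberg} inline, but no new idea is required.
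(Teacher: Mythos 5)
Your proposal is correct and takes essentially the same route as the paper: the corollary is presented there as a direct specialization of Theorem~\ref{thm:quantitative-disc-tverberg} to $k=1$ (using $\h_S(1)=\h_S$), with the Remark preceding the corollary supplying exactly your observation that the discreteness of $S$ is never used in that proof, so the bound $(m-1)d\cdot\h_S+1$ holds for arbitrary $S$ with finite Helly number. (Your middle-paragraph sketch of the theorem's inner workings via lifted-point matrices does not reflect the actual argument in Section~\ref{section-Tverberg}, which proceeds via the family of large subsets, the Helly number, depth, and Carath\'eodory; but since you ultimately invoke the theorem verbatim, this is immaterial.)
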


The next corollary uses the work on $S$-Helly
numbers presented in \cite{queretaro} and in \cite{AW2012}. In \cite{queretaro}, the authors presented many new bounds for Helly numbers of interesting subsets of $\R^d$. 

\begin{corollary}[$S$-Tverberg number for interesting families]
From Corollary \ref{corollary-simple-S-tverberg}, the following Tverberg numbers $\tv_S(m)$
exist and are bounded as stated in the following situations:
\begin{enumerate}
\item When $S=\Z^{d-a}\times\R^a$, we have $\tv_S(m)\leq (m-1)d (2^{d-a}(a+1))+1.$ 

\item Let $L',L''$ be sublattices of a lattice $L \subset\R^d$. 
Then, if $S=L \setminus (L' \cup L'')$, the Tverberg number satisfies
$\tv_S(m)\leq 6(m-1)d 2^d+1$.

\item If $S$ is an additive subgroup $S\subset\R^d$ with closure $\Z^{d-a}\times\R^a$, then  we have $\tv_S(m)\le (m-1)d \max(2^{d-a}(a+1),2^{d-1}+2)+1$. More strongly, if $S$ is also a ${\mathbb Q}$-module, the bound can be improved to $\tv_S(m)\le 2(m-1)d^2+1$.

\end{enumerate}
\end{corollary}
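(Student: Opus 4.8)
The plan is to obtain all three items as immediate instances of Corollary \ref{corollary-simple-S-tverberg}, which gives $\tv_S(m)\le (m-1)d\cdot\h_S+1$ whenever the $S$-Helly number $\h_S$ is finite. So for each family of sets $S$ in the statement it suffices to (i) check that $\h_S$ exists and (ii) substitute a known upper bound for $\h_S$; the displayed inequalities then drop out. No new Tverberg-type argument is required beyond Theorem \ref{thm:quantitative-disc-tverberg} and Corollary \ref{corollary-simple-S-tverberg}; the work is entirely in quoting the right $S$-Helly estimates.

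For item (1), I would take $S=\Z^{d-a}\times\R^a$ and apply the identity $\h_{\Z^{a'}\times\R^{b'}}=(b'+1)2^{a'}$ from \cite{AW2012} (recalled in the introduction) with $a'=d-a$ and $b'=a$. This gives $\h_S=(a+1)2^{d-a}$, and plugging into Corollary \ref{corollary-simple-S-tverberg} yields $\tv_S(m)\le (m-1)d\,(2^{d-a}(a+1))+1$, which in particular shows that the number exists.

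For item (2), with $S=L\setminus(L'\cup L'')$ the relevant input is the bound of \cite{queretaro} for the Helly number of a lattice with finitely many sublattices removed: removing two sublattices gives $\h_S\le(R_2-1)2^d$ with $R_2=R(3,3)=6$, hence $\h_S\le 5\cdot 2^d\le 6\cdot 2^d$. Substituting $\h_S\le 6\cdot 2^d$ into Corollary \ref{corollary-simple-S-tverberg} gives $\tv_S(m)\le 6(m-1)d\,2^d+1$ (here $m$ is the Tverberg partition parameter, unrelated to the two removed sublattices). For item (3), the estimate $\h_S\le\max(2^{d-a}(a+1),\,2^{d-1}+2)$ for an additive subgroup with closure $\Z^{d-a}\times\R^a$, and the sharper $\h_S\le 2d^2$ when $S$ is additionally a $\mathbb{Q}$-module, are exactly the $S$-Helly bounds of \cite{queretaro} (see also \cite{AW2012}); inserting them into Corollary \ref{corollary-simple-S-tverberg} produces the two inequalities in the statement.

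The only thing needing attention --- and as close as this gets to an obstacle --- is bookkeeping: matching each $S$ to the precise hypotheses of the cited $S$-Helly theorem (especially which coordinates are discrete versus continuous in item (1), and the requirement that $S$ be a $\mathbb{Q}$-module for the improved bound in item (3)), and confirming that each cited result guarantees the \emph{finiteness} of $\h_S$ rather than only a conditional bound, so that Corollary \ref{corollary-simple-S-tverberg} is applicable. Both are immediate from the quoted statements, and the corollary follows.
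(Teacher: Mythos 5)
Your proposal is correct and matches the paper's approach exactly: the corollary is stated as an immediate consequence of Corollary \ref{corollary-simple-S-tverberg}, with the $S$-Helly bounds imported from \cite{AW2012} and \cite{queretaro} precisely as you describe. Your observation in item (2) that $R_2-1=5$ actually yields the slightly sharper constant $5(m-1)d2^d+1$ is a fair point, but it only strengthens the stated inequality.
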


There is the very important case of $S=\Z^d$ that we highlight.
 Our Theorem \ref{thm:quantitative-disc-tverberg} allows us to improve the priorly known upper bound slightly from $O(m(d+1)2^d)$ to $O(md2^d)$ (see \cite{Eckhoff:2000jw,onn+radon} on prior results, in particular Onn's work on the case of Radon partitions ($m=2$)).

\begin{corollary}[Improvements on integer Tverberg]
\label{corollary:integer-tverberg}
Setting $S=\Z^d$, we obtain the following bound on the Tverberg number:
\[
\tv_{\Z^d}(m) \le (m-1)d 2^d +1.
\]
\end{corollary}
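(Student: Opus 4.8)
The plan is to deduce this as a direct corollary of Corollary \ref{corollary-simple-S-tverberg} applied to $S = \Z^d$, using Doignon's theorem to bound the ordinary $\Z^d$-Helly number. By Doignon's theorem, a rank-$d$ lattice $L$ inside $\R^d$ has $\h_L \le 2^d$; in particular $\h_{\Z^d} \le 2^d$, so this Helly number exists and is finite. Corollary \ref{corollary-simple-S-tverberg} then applies with $S = \Z^d$ and gives
\[
\tv_{\Z^d}(m) \le (m-1)d \cdot \h_{\Z^d} + 1 \le (m-1)d\, 2^d + 1,
\]
which is exactly the claimed bound.

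The steps, in order, are: first, invoke Doignon's theorem to record $\h_{\Z^d} \le 2^d$; second, check that the hypothesis of Corollary \ref{corollary-simple-S-tverberg}—namely that $\h_S$ exists—is satisfied for $S = \Z^d$, which is immediate from the first step; third, substitute into the inequality of Corollary \ref{corollary-simple-S-tverberg} and use monotonicity of the right-hand side in $\h_S$ to replace $\h_{\Z^d}$ by its upper bound $2^d$. I would also briefly remark on why this improves the previously known bound: the earlier argument combined Jamison's general-convexity-space Tverberg bound with Doignon's theorem and lost an extra factor, yielding roughly $(m-1)(d+1)2^d$, whereas the new route through Corollary \ref{corollary-simple-S-tverberg} replaces the $(d+1)$ by $d$ (and trims lower-order terms), giving $O(md2^d)$ instead of $O(m(d+1)2^d)$.

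There is essentially no obstacle here, since all the work has been done in establishing Corollary \ref{corollary-simple-S-tverberg} (which in turn rests on Theorem \ref{thm:quantitative-disc-tverberg} with $k=1$) and in Doignon's classical theorem. The only point requiring a word of care is that Corollary \ref{corollary-simple-S-tverberg} is stated for general $S \subseteq \R^d$ with $\h_S$ finite and does not require $S$ to be discrete, so applying it to the discrete set $\Z^d$ is unproblematic; and one should note that the $k=1$ case genuinely falls under the scope of that corollary (as emphasized in the remark following Theorem \ref{thm:quantitative-disc-tverberg}), so no separate discreteness hypothesis is needed. Thus the proof is a one-line chain of inequalities once the two cited results are in place.
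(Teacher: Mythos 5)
Your derivation is correct and is exactly the route the paper intends: the corollary follows directly from Corollary \ref{corollary-simple-S-tverberg} (i.e.\ Theorem \ref{thm:quantitative-disc-tverberg} with $k=1$) combined with Doignon's bound $\h_{\Z^d}\le 2^d$. Nothing further is needed.
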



Finally, we believe that a discrete colorful quantitative Tverberg should be true too, in the same sense as Conjecture \ref{conjecture-colorful-Tverberg}.  Namely, we propose the following.

\begin{conjecture}
Let $S \subset \R^d$ be a set such that the Helly number $\h_S(k)$ is finite for all $k$.  Then, for any $m,k$ there are integers $m_1$ and $m_2$ such that the following statement holds.

Given $m_1$ families $F_1, F_2, \ldots, F_{m_1}$ families of $m_2$ points of $S$ each, considered as color classes, there are $m$ pairwise disjoint colorful sets $A_1, A_2, \ldots, A_m$ such that
\[
\bigcap_{i=1}^m \conv (A_i)
\]
contains at least $k$ points of $S$.
\end{conjecture}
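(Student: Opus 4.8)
The plan is to establish the conjecture as the \emph{colorful} analogue of Theorem~\ref{thm:quantitative-disc-tverberg}, by carrying out that theorem's proof with each of its two main ingredients replaced by a colorful counterpart already available in this paper: in place of the quantitative $S$-Helly number $\h_S(k)$ one would use the colorful quantitative $S$-Helly theorem (the result stated just before Corollary~\ref{aquantitative-discrete-doignon}), and in place of the quantitative Carath\'eodory step over $S$ one would use the colorful discrete quantitative Carath\'eodory theorem, Theorem~\ref{theorem-quantitative-discrete-caratheodory}. The number $m_1$ of color classes should play the role of the bound $\h_S(k)(m-1)kd$ from Theorem~\ref{thm:quantitative-disc-tverberg} --- the colors get used in $m-1$ rounds of peeling, each round consuming a block of $\operatorname{ex}(K)\cdot d\le kd$ colors and certified by a Helly argument on at most $\h_S(k)$ sets --- while $m_2$ should be chosen a bit larger than $m$, with enough slack to survive every round of peeling; the precise values would fall out of the induction.

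Concretely, I would argue by induction on $m$, in parallel with the proof of Theorem~\ref{thm:quantitative-disc-tverberg}. The base case $m=1$ is the trivial observation that a colorful selection of $k$ points, one from each of $k$ distinct color classes, lies in $S$ and forms its own common intersection. For the inductive step, given color classes $F_1,\dots,F_{m_1}$ with $m_2$ points of $S$ each, the property ``the intersection of the convex hulls of the parts meets $S$ in at least $k$ points'' is monotone, orderable, and (by the standing hypothesis) has finite Helly number $\h_S(k)$, so by the colorability principle of \cite{queretaro} it admits a colorful $S$-Helly theorem; combining this with Theorem~\ref{theorem-quantitative-discrete-caratheodory} supplies the colorful ingredients needed below. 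Using them, one would (i) identify a set $K\subseteq S$ with $|K|\ge k$ that is ``deep'' with respect to most of the color classes; (ii) peel off a single colorful part $A_m$, taking one point from each color in a block of $\operatorname{ex}(K)\cdot d$ colors, with $K\subseteq\conv(A_m)$ via Theorem~\ref{theorem-quantitative-discrete-caratheodory}; and (iii) apply the inductive hypothesis to the reduced color classes (of size $m_2-1$, fewer colors) to get $A_1,\dots,A_{m-1}$ sharing the \emph{same} $K$. Any leftover points are thrown into $A_1$, which is harmless by monotonicity.

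The step I expect to be the genuine obstacle --- and the reason this is phrased as a conjecture --- is coordinating (ii) and (iii) under the colorful constraint. In the monochromatic proof of Theorem~\ref{thm:quantitative-disc-tverberg} one can freely shuffle points between parts so that the set $K$ which the first $m-1$ parts turn out to share can also be enclosed by the part peeled off last; but when each part must contain exactly one point of each color, a point of a prescribed color needed to enclose $K$ may already be spoken for elsewhere. This is precisely the rigidity that keeps the B\'ar\'any--Larman conjecture (Conjecture~\ref{conjecture-colorful-Tverberg}) open in the classical case. I would therefore expect the Helly-plus-Carath\'eodory approach sketched above to yield, at best, a version of the conjecture in which $m_2$ must be a constant factor larger than $m$ or must satisfy a primality-type condition, in the spirit of the Blagojevi\'c--Matschke--Ziegler input used for Theorem~\ref{theorem-colorful-tverberg-volume}, rather than the uniform ``there exist $m_1,m_2$'' statement for an arbitrary discrete $S$ with all $\h_S(k)$ finite; obtaining the full conjecture as stated would likely require either a truly colorful orderability argument for the Tverberg property or topological/algebraic machinery of the kind behind colorful Tverberg over $\R^d$.
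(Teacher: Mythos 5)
The statement you are addressing is posed in the paper as an open conjecture --- it appears in the ``?'' cell of Table~1 and the authors offer no proof of it --- so there is no argument of theirs to compare against, and your proposal does not close the gap either: by your own account the crucial step is unresolved, and that assessment is accurate. Concretely, the induction you sketch in parallel with Theorem~\ref{thm:quantitative-disc-tverberg} breaks at the point where you must, in round $j$, enclose the \emph{same} witness set $K\subseteq S$ by a colorful part $A_j$ using only colors and points not consumed in earlier rounds. In the monochromatic proof this is handled by a pure counting/depth argument: each previously extracted part is shrunk (via Theorem~\ref{theorem-quantitative-discrete-caratheodory} or Carath\'eodory) to at most $kd$ points, so removing it lowers the depth of $K$ by at most $kd$, and any remaining points are free to be used. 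Under the colorful constraint this bookkeeping no longer suffices, because depth controls cardinalities, not colors: the halfspace argument cannot guarantee that the surviving points deep around $K$ are spread across enough distinct unused color classes, and Theorem~\ref{theorem-quantitative-discrete-caratheodory} itself requires that the convex hull of \emph{each} color class you invoke still contains $K$, which is exactly what may fail after earlier rounds have eaten points from those classes.

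The other colorful ingredient you cite also does not do the work you assign to it. The colorful quantitative $S$-Helly theorem stated before Corollary~\ref{aquantitative-discrete-doignon} has convex sets as its color classes and concludes that \emph{some whole color family} has a rich intersection; in the proof of Theorem~\ref{thm:quantitative-disc-tverberg} the Helly step is applied to the auxiliary family $\mathcal F$ of large subsets of the point set $A$, and there is no evident way to re-colour that auxiliary family so that the colorful Helly conclusion produces a deep set $K$ compatible with the colors living on the \emph{points}. So steps (i)--(iii) of your plan each rely on a colorful strengthening that the paper does not provide, and the difficulty you correctly flag --- the rigidity of one-point-per-class selection, the same rigidity behind Conjecture~\ref{conjecture-colorful-Tverberg} --- is precisely why the authors leave this as a conjecture. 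Your sketch is a reasonable program (and your suggestion that a relaxed version with $m_2$ larger than $m$, in the spirit of Theorem~\ref{theorem-colorful-tverberg-volume}, is the realistic target seems right), but as it stands it is a plan with an acknowledged missing idea, not a proof.
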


Even in the case $k=1$, $S=\mathbb{Z}^d$, the question above remains interesting. As with Conjecture \ref{conjecture-colorful-Tverberg}, it would be desirable to have $m_2 = m$ in the cases where the above is true.
\section{Proofs of quantitative Carath\'eodory theorems}\label{section-caratheodory}


We prove only the colorful versions of our Carath\'eodory type theorems.  Given sets $X_1, \ldots, X_n$, considered as color classes, whose convex hulls contain a large set $K$, we want to make a colorful choice $x_1 \in X_1, \ldots, x_n \in X_n$ such that $\conv \{x_1, \ldots, x_n\}$ is also large. The monochromatic versions of the results below is simply the case when $X_1 = X_2 = \cdots = X_n$.

\subsection{Continuous quantitative Carath\'eodory}


There are two parameters we may seek to optimize.  One is the number $n$ of sets required to obtain some lower bound for the size of $\conv \{x_1, \ldots, x_n\}$.  The other is the size of $\conv \{x_1, \ldots, x_n\}$ assuming that the size of $K$ is $1$. We obtain a different result for each case.

The only existing quantitative result of this kind is a monochromatic quantitative version of Steinitz' theorem by B\'ar\'any, Katchalski, and Pach \cite{baranykatchalskipach}, quantifying the largest size of a ball centered at $0$ and contained in $K$, described in the introduction. The case when $X$ is the set of vertices of a regular octahedron centered at the origin shows that the number of points they use, $2d$, cannot be reduced. Here we show how adapting the proof of \cite{baranykatchalskipach} gives Theorem \ref{theorem-colored-steinitz}. The only extra ingredient needed is the ``very colorful  Carath\'eodory'' of Arocha, B\'ar\'any, Bracho, Fabila, and Montejano \cite[Theorem~2]{Arocha:2009ft}.



\begin{theoremp}[J.~Arocha et al. 2009 \cite{Arocha:2009ft}] \label{verycolorfulcaratheodory}
Let $X_1, X_2, \ldots, X_d \subset \R^d$ be sets, each of whose convex hulls contains $0$ and one additional point $p \in \R^d$.  Then, we can choose $x_1 \in X_1, \ldots, x_d \in X_d$ such that
	\[
	0 \in \conv\{x_1, x_2, \ldots, x_d, p\}.
	\] 
\end{theoremp}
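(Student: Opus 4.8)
The plan is to adapt the minimum-distance proof of B\'ar\'any's colorful Carath\'eodory theorem, treating the extra point $p$ as though it were one more colour class.

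Two preliminary reductions. If $p=0$ the statement is trivial, since each $X_i$ is nonempty (its convex hull contains $0$), so assume $p\neq 0$. Next, by Carath\'eodory's theorem each of $0$ and $p$ is a convex combination of finitely many points of $X_i$, so after discarding the rest we may assume every $X_i$ is finite; then there are only finitely many rainbow tuples $(x_1,\dots,x_d)\in X_1\times\cdots\times X_d$, and the function $f(x_1,\dots,x_d):=\operatorname{dist}\!\big(0,\conv\{x_1,\dots,x_d,p\}\big)$ attains a minimum $\delta\ge 0$. If $\delta=0$ we are done, so suppose $\delta>0$, let $(x_1,\dots,x_d)$ be a minimizer, and let $y^*$ be the point of $\conv\{x_1,\dots,x_d,p\}$ nearest $0$, so $|y^*|=\delta$ and $H^*:=\{z:\langle z,y^*\rangle=\delta^2\}$ supports this polytope at $y^*$ (the polytope lies in $\{\langle z,y^*\rangle\ge\delta^2\}$, while $\langle 0,y^*\rangle=0<\delta^2$).

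The main step is the usual local-exchange move. Being a nearest point, $y^*$ lies in a proper face of $\conv\{x_1,\dots,x_d,p\}$, so $y^*$ is a positive convex combination of an affinely independent set $F\subseteq\{x_1,\dots,x_d,p\}$ with $|F|\le d$, and every $v\in F$ satisfies $\langle v,y^*\rangle=\delta^2$. If some $x_j\notin F$ then, since $0\in\conv X_j$, there is a point $x_j'\in X_j$ with $\langle x_j',y^*\rangle<\delta^2$ (otherwise $0$ could not be a convex combination of points with $\langle\cdot,y^*\rangle\ge\delta^2>0$); replacing $x_j$ by $x_j'$ keeps $y^*$ in the new convex hull, and moving from $y^*$ towards $x_j'$ strictly decreases the distance to $0$ (the derivative is $2(\langle y^*,x_j'\rangle-\delta^2)<0$), so the new rainbow tuple has smaller $f$, contradicting minimality. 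Since $|F|\le d<d+1$, the only way this can fail is $F=\{x_1,\dots,x_d\}$ with $p\notin F$: the optimal simplex $\conv\{x_1,\dots,x_d\}$ is $(d-1)$-dimensional, lies in $H^*$, $y^*$ is in its relative interior, and $\langle p,y^*\rangle\ge\delta^2$.

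I expect this degenerate configuration to be the real obstacle, and it is exactly here that the hypothesis $p\in\conv X_i$ --- unused up to this point --- must be brought in. (It is genuinely needed: the statement is false assuming only $0\in\conv X_i$, as one sees by taking $X_1=\cdots=X_d$ to be the vertices of a simplex with $0$ in its interior but all of whose facets avoid $0$.) In the degenerate case the single-vertex exchange fails because each $x_j$ is now needed to represent $y^*$, so one must use that $\conv X_i$ contains both $0$ and $p$, which lie on opposite sides of (or on) the supporting hyperplane $H^*$: I would try to produce a genuinely different rainbow tuple of smaller $f$ by exchanging several $x_i$ at once, by a perturbation argument that destroys the flat contact with $H^*$, or by invoking Carath\'eodory a second time inside $H^*$. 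An alternative route isolating the same difficulty is to run the whole scheme as a reduction to B\'ar\'any's colorful Carath\'eodory theorem in dimension $d-1$: applying the quotient map $\pi\colon\R^d\to\R^d/\mathrm{span}(p)\cong\R^{d-1}$ makes each $\pi(X_i)$ contain $0$ in its convex hull, so that theorem (with the $d=(d-1)+1$ colours $\pi(X_1),\dots,\pi(X_d)$) yields a rainbow tuple whose convex hull meets the line $\mathrm{span}(p)$; upgrading ``meets the line $\mathrm{span}(p)$'' to ``meets the ray $\{-tp:t\ge 0\}$'' --- which is equivalent to $0\in\conv\{x_1,\dots,x_d,p\}$ --- is again precisely where $p\in\conv X_i$ must enter, and is the heart of the proof.
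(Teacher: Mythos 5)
Your proposal is not a proof: both routes you sketch stop exactly at the point where the hypothesis $p\in\conv X_i$ would have to be used, and that point is the entire content of the theorem. The minimum-distance setup and the single-vertex exchange are fine as far as they go, but they only rule out the non-degenerate configurations; in the case you correctly isolate --- $F=\{x_1,\dots,x_d\}$, $p\notin F$, with $y^*$ in the relative interior of the rainbow simplex lying in the supporting hyperplane $H^*$ --- you offer only a list of strategies (``exchange several $x_i$ at once'', ``a perturbation argument'', ``invoke Carath\'eodory a second time inside $H^*$''), none of which is carried out. The alternative reduction through the quotient $\R^d/\mathrm{span}(p)$ has the same status: colorful Carath\'eodory in $\R^{d-1}$ gives a rainbow hull meeting the line $\mathrm{span}(p)$, and you explicitly leave the upgrade from the line to the ray $\{-tp: t\ge 0\}$ as ``the heart of the proof''. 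Since no completed step of your argument ever invokes $p\in\conv X_i$, what you have is a clean reduction of the theorem to its essential difficulty, not a proof. Note also that the paper does not prove this statement --- it quotes it from Arocha, B\'ar\'any, Bracho, Fabila, and Montejano --- so to use it you must either close the degenerate case yourself or cite their argument.

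A secondary but concrete error: the parenthetical counterexample you give to show that $p\in\conv X_i$ is indispensable does not work. If $X_1=\cdots=X_d$ is the vertex set of a simplex with $0$ in its interior, then for any $p\neq 0$ the ray $\{-tp: t\ge 0\}$ exits the simplex through some facet; that facet has exactly $d$ vertices, which constitute a legitimate rainbow choice $x_1,\dots,x_d$, and then $0\in\conv\{x_1,\dots,x_d,p\}$ because $-t_0p\in\conv\{x_1,\dots,x_d\}$ for the exit parameter $t_0>0$. (You appear to be testing $0\in\conv\{x_1,\dots,x_d\}$, forgetting that $p$ may carry weight in the convex combination.) In fact, for $d=2$ an elementary angular argument shows the conclusion already follows from $0\in\conv X_i$ alone, so if the hypothesis $p\in\conv X_i$ is truly needed it must be exhibited by a more careful higher-dimensional construction; in any event, your justification for where the hypothesis enters is currently unsupported, which reinforces that the key step is missing rather than merely deferred.
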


\begin{proof}[Proof of Theorem \ref{theorem-colored-steinitz}] Our goal is to pick explicitly the $2d$ points $x_1,\dots,x_{2d}$. For this, let
 $P$ be a regular simplex of maximal volume contained in $B_1(0)$.  Note that $B_{1/d}(0) \subset P \subset B_1(0)$.  Since $P \subset X_i$ for an arbitrary $i$ and $P$ has $d+1$ vertices, by repeatedly applying Carath\'eodory's theorem we can see that there is a subset of $X_i$ of size at most $(d+1)^2$ whose convex hull contains $P$.  Thus, without loss of generality we may assume $|X_i| \le (d+1)^2$ and $B_{1/d}(0) \subset \conv(X_i)$ for all $i$.
	
  Given a collection of $d$ points, $x_1 \in X_1$, $x_2 \in X_2$, $\ldots$, $x_d \in X_d$, consider the convex (simplicial) cone spanned by them.  Let $C_1, C_2, \ldots, C_n$ be all possible cones generated this way.  The number of cones, $n$, is clearly bounded by
  \[
  n \le (d+1)^{2d}.
  \]
  \noindent \textbf{Claim.} The cones $C_1, C_2, \ldots, C_n$ cover $\R^d$.
  
  In order to prove the claim, it suffices to show that for each vector $v$ of norm at most $\frac1d$, there is a cone $C_i$ that contains it.  However, since $B_{1/d}(0) \subset X_i$ for all $i$ (in particular for the first $d$), we can apply the very colorful Carath\'eodory theorem above with the point in the convex hull being $v$ and the extra point being $0$.
  
  If we denote by $w_{d-1}$ the surface area of the unit sphere $S^{d-1}$, there must be one of the cones 
  $C_i$ which covers a surface area of at least $\frac1n w_{d-1}$.  We can assume without loss of generality that it is the first cone $C_1$.
 
  Let $a \in C_1$ be a unit vector whose minimal angle $\alpha$ with the facets of $C_1$ is maximal (i.e. we take the incenter of $C_1 \cap S^{d-1}$, with distance measured in the sphere).  Now we show that since the surface area of $C_1 \cap S^{d-1}$ is large, its inradius must also be large.  The argument we present is different from \cite{baranykatchalskipach}, giving a slightly worse constant.  Our final radius is $d^{-2d-2}$ as opposed to their $d^{-2d}$.
  
  For a facet $L_i$ of $C_1$, let $D_i$ be the set of points whose angle with $L_i$ is at most $\alpha$ and that lie on the same side of $L_i$ as $a$.  Note that $C_1$ has $d$ facets and 
  so $\cup_{i=1}^d D_i = C_1$.  The surface area of $S^{d-1} \cap D_i$ is clearly bounded by $\frac{\alpha}{2\pi}w_{d-1}$.  Thus
  \[
  \frac1n w_{d-1} \le \operatorname{Area}(S^{d-1} \cap C_1)  < \sum_{i=1}^d \operatorname{Area} (S^{d-1}\cap D_i)  \le \frac{d\alpha}{2\pi}w_{d-1},
  \]
which implies $\alpha > \frac{2\pi}{dn}$.  Now consider $a' =\frac{-1}d a$, the vector of norm $\frac1d$ in the direction opposite to $a$.  By applying the very colorful Carath\'eodory as before, 
we can choose now $x_{d+1} \in X_{d+1}, x_{d+2} \in X_{d+2}, \ldots, x_{2d} \in X_{2d}$ such that
  \[
  a' \in \conv\{0, x_{d+1}, x_{d+2}, \ldots, x_{2d}\}.
  \]
  Now consider the set $K = \{x \in \frac1d S^{d-1} : \angle (x, a) \le \alpha\}$.  Let $x_1 \in X_1, \ldots, x_d \in X_d$ be the $d$ points that generate $C_1$.  Notice that the cone with apex $a'$ 
  and base $K$ is contained in $\conv\{x_1, x_2, \ldots, x_{2d}\}$.  Finding the radius $r$ of the largest ball around $0$ that is contained in this new cone is easily reduced to a $2$-dimensional 
  problem, giving
  \[
  r = \frac{\tan \alpha}{2d} > \frac{\alpha}{2d} > \frac{\pi}{nd^2} > d^{-2d-2}.
  \]
  as we wanted.
\end{proof}

It seems more natural to optimize the size of $\conv\{x_1, \ldots, x_n\}$ instead of the integer $n$.  This optimization turns out to be closely related to finding efficient approximations of convex sets with polytopes.  This is a classic problem which has many other motivations, see \cite{gruber1993aspects,bronstein2008approximation, barvinok2014thrifty} for the state of the art and the history of this subject. In this paper we will need the following three important constants.

\begin{definition}\label{definition-inscribed-volume}
	Let $d$ be a positive integer and $\ep>0$.  We define $n(d,\ep)$ as the smallest integer such that, for any convex set $K \subset \R^d$ with positive volume, there is a polytope $P \subset K$ of at most $n(d,\ep)$ vertices such that
	\[
	\vol(P) \ge (1-\ep) \vol(K).
	\]
\end{definition}

\begin{definition}\label{banach-mazur-value}
	Let $d$ be a positive integer and $\ep>0$.  We define $n^{\operatorname{bm}}(d,\ep)$ as the smallest integer such that, for any centrally symmetric convex set $K \subset \R^d$ with positive volume, there is a polytope $P$ of at most $n^{\operatorname{bm}}(d,\ep)$ vertices and a linear transformation $\lambda: \R^d \to \R^d$ such that
	\[
	P \subset \lambda(K) \subset (1+\ep )P.
	\]
	In other words, there is always a polytope $P \subset K$ of fixed number of vertices which is within $\varepsilon$ of $K$ according to the Banach-Mazur distance.
\end{definition}

\begin{definition}\label{definition-for-helly}
Let $d$ be a positive integer and $\ep>0$.  We define $n^*(d,\ep)$ as the smallest integer such that, for any convex set $K \subset \R^d$ with positive volume, there is a polytope $P \supset K$ of at most $n^*(d,\ep)$ facets such that
\[
\vol(P) \le (1+\ep) \vol(K).
\]	
\end{definition}

The asymptotic behavior of $n(d, \ep)$ is known: 
\[\left(\frac{c_1d}{\ep}\right)^{(d-1)/2} \ge n(d,\ep) \ge \left(\frac{c_2d}{\ep}\right)^{(d-1)/2},\]
for absolute constants $c_1, c_2$.

This comes from approximating convex bodies with polytopes of few vertices via the Nikodym metric \cite[Section 4.2]{bronstein2008approximation}.  The lower and upper bounds can be found in \cite{gordon1995constructing} and \cite{gordon1997umbrellas}, respectively.

We will use these definitions to obtain both upper and lower bounds for our quantitative results. As shown below, $n(d,\ep)$ is precisely the number needed for a quantitative colorful Steinitz theorem with volume. The constant $n^{\operatorname{bm}}(d,\ep)$ will be needed to improve the quantitative Steinitz theorem if we are interested in determining the size of a set by the radius of the largest ball around the origin contained in it.  The bounds for $n^{\operatorname{bm}}(d, \ep)$ involve the condition of central symmetry as the Banach-Mazur distance is most natural when working with norms in Banach spaces. Recently, Barvinok has obtained very sharp estimates of $n^{\operatorname{bm}}(d,\ep)$ \cite{barvinok2014thrifty}, giving $n^{\operatorname{bm}}(d,\ep) \le \left(\frac{1}{\sqrt{\ep}}\ln \left(\frac{1}{\ep}\right)\right)^d$ if $d$ is large enough.  It should be noted that $n^{\operatorname{bm}}(d,\ep) = \Omega (\ep^{(-(d-1)/2})$ \cite{blaschke1923affine}.

Finally, the constant $n^*(d,\ep)$ is the key value for the continuous quantitative Helly theorems in Section \ref{section-helly}.  A result of Reisner, Sch\"ut and Werner shows that $n^*(d, \ep) \le 2 n(d, \ep)$ \cite[Section 5]{reisner2001dropping}.  Using the notation above, they actually find a polytope $P \subset K$ of few vertices such that $P$ has at least a $(1-\ep)$-fraction of the volume of $K$ and its polar $P^*$ has at most a $(1+\ep)$-fraction of the volume of $K^*$.

We are now ready to prove Theorem \ref{theorem-thrifty-steinitz}.


\begin{proof}[Proof of Theorem \ref{theorem-thrifty-steinitz}]
	Let $P \subset K$ be a polytope with $n=n(d,\ep)$ vertices such that $\vol(P) \ge (1- \ep) \vol(K)=(1-\ep)$.  We may assume without loss of generality that $0$ is in the interior of $P$.  Now label the vertices of $P$ as $y_1, y_2, \ldots, y_n$.  Using the very colorful Carath\'eodory theorem as in the previous proof, for a fixed $j$ we can find $x_{(j-1)d+1}{\in}X_{(j-1)d+1}$, $\ldots ,$ $x_{jd} \in X_{jd}$ such that
	\[
	y_j \in \conv\{0, x_{(j-1)d+1}, \ldots, x_{jd} \}.
	\]
	In order to finish the proof, it suffices to show that $0 \in \conv\{x_1, \ldots, x_{nd} \}$.  If this is not the case, then there is a hyperplane separating $0$ from $\conv\{x_1, \ldots, x_{nd} \}$.  We may assume that the hyperplane contains $0$ and leaves $x_1, \ldots, x_{nd}$ in the same closed halfspace.  Notice that then there would be a vertex of $y_j$ of $P$ in the other (open) halfspace, contradicting the fact that $y_j{\in}\conv\{0,x_1,\ldots,x_{nd}\}$. 
	
We now prove the near-optimality of our bound. Let $K$ be a convex set of volume $1$ such that for every polytope $P \subset K$ of at most $n-1$ vertices we have $\vol(P) < 1-\ep$.  Then, having $K = X_1 = X_2 = \cdots = X_{n-1}$ gives the desired counterexample, as any colorful choice of points has size $n-1$.
\end{proof}

If we want the subset to be close to $K$ in terms of the Banach-Mazur distance, we simply replace $n(d,\ep)$ by $n^{\operatorname{bm}}(d,\ep)$ and the same proof holds.

\begin{theorem}[Quantified colorful Steinitz for Banach-Mazur distance]\label{theroem-steinitz-banach-mazur}
Let $d$ be a positive integer and $\ep>0$ be a constant.  Set $n=n^{\operatorname{bm}}(d,\ep)$ and let $X_1, X_2, \ldots, X_{nd}$ be sets in $\R^d$ such that $K \subset \bigcap_{i=1}^{nd}\conv X_i$ is a centrally symmetric convex set with volume $1$.  Then, we can find $x_1 \in X_1, x_2 \in X_2, \ldots, x_{nd} \in X_{nd}$ and an affine transformation $\lambda:\R^d \to \R^d$ so that $\conv\{x_1, x_2, \ldots, x_{nd}\}$ contains a set $P$ with
\[
P \subset \lambda(K) \subset (1+\ep)P.
\]
Moreover, $n$ is a lower bound for the number of sets needed for this result to hold.
\end{theorem}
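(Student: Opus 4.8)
The plan is to mimic almost verbatim the proof of Theorem~\ref{theorem-thrifty-steinitz}, replacing the volumetric approximation guarantee by the Banach--Mazur one from Definition~\ref{banach-mazur-value}. First I would invoke the definition of $n^{\operatorname{bm}}(d,\ep)$ applied to the centrally symmetric convex body $K$: there is a polytope $Q$ with at most $n=n^{\operatorname{bm}}(d,\ep)$ vertices and a linear map $\mu:\R^d\to\R^d$ with $Q\subset \mu(K)\subset (1+\ep)Q$. Pulling back, $P:=\mu^{-1}(Q)$ is a polytope with at most $n$ vertices, $P\subset K$, and $\lambda(K)\subset(1+\ep)\mu^{-1}(Q)$ for a suitable choice; more cleanly, I will just work with $Q$ inside $\mu(K)$ and transport the chosen points by $\mu^{-1}$ at the very end, or equivalently observe that since $K\subset\bigcap_i\conv X_i$ and $\mu$ is linear, $\mu(K)\subset\bigcap_i\conv(\mu(X_i))$, pick points in $\mu(X_i)$, and pull back. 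Either bookkeeping route works; I would pick the one that keeps the colorful Carath\'eodory application cleanest.

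The heart of the argument is identical to the volume case. Since $P\subset K$ is a polytope with vertices $y_1,\dots,y_n$ and we may assume $0\in\inter P$ (translate, using central symmetry of $K$ to keep things centered if desired), for each $j$ I apply the very colorful Carath\'eodory theorem (Theorem~\ref{verycolorfulcaratheodory}) to the $d$ color classes $X_{(j-1)d+1},\dots,X_{jd}$ with distinguished point $y_j$ in each of their convex hulls (valid because $K\subset\conv X_i$ and $y_j\in P\subset K$) and extra point $0$, obtaining $x_{(j-1)d+1}\in X_{(j-1)d+1},\dots,x_{jd}\in X_{jd}$ with $y_j\in\conv\{0,x_{(j-1)d+1},\dots,x_{jd}\}$. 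A separating-hyperplane argument then shows $0\in\conv\{x_1,\dots,x_{nd}\}$: otherwise a hyperplane through $0$ leaves all $x_i$ in one closed halfspace, forcing some $y_j$ into the open complementary halfspace and contradicting $y_j\in\conv\{0,x_1,\dots,x_{nd}\}$. Consequently $\conv\{x_1,\dots,x_{nd}\}\supseteq\conv\{0,y_1,\dots,y_n\}=P$ (using $0\in\inter P$), so $\conv\{x_1,\dots,x_{nd}\}$ contains $P$ with $P\subset K\subset(1+\ep)P$ — which is precisely the desired conclusion with $\lambda=\mathrm{id}$ once the linear transport is unwound, giving the stated $P\subset\lambda(K)\subset(1+\ep)P$.

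For the lower bound I would argue exactly as in Theorem~\ref{theorem-thrifty-steinitz}: by definition of $n^{\operatorname{bm}}(d,\ep)$ there is a centrally symmetric convex body $K$ of volume $1$ such that no polytope with at most $n-1$ vertices $P$ admits a linear image sandwiched as $P\subset\lambda(K)\subset(1+\ep)P$; taking $X_1=\dots=X_{n-1}=K$, any colorful selection produces at most $n-1$ points, whose convex hull therefore cannot contain such a $P$, so $n$ sets are genuinely necessary. (One should check the normalization of the body witnessing the lower bound in Definition~\ref{banach-mazur-value} can be taken of volume $1$, which is immediate by scaling since both the vertex count and the sandwiching condition are scale-invariant.)

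I do not expect any serious obstacle: every ingredient — the very colorful Carath\'eodory theorem, the separation argument, and the definition of $n^{\operatorname{bm}}$ — is already available, and the proof is structurally the one just given for the volume version. The only mild point of care, and the thing I would double-check, is the handling of the linear transformation $\lambda$: one must be careful that applying $\mu^{-1}$ to the selected colorful points does not disturb the color classes (it does not, since $\mu^{-1}$ is a bijection and $x_i\in\mu(X_i)\iff\mu^{-1}(x_i)\in X_i$), and that the final sandwich inequality $P\subset\lambda(K)\subset(1+\ep)P$ comes out with the affine map on the correct side. This is pure bookkeeping rather than a real difficulty.
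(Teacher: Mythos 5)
Your proposal is correct and coincides with the paper's own treatment: the paper proves Theorem \ref{theroem-steinitz-banach-mazur} by the single remark that one replaces $n(d,\ep)$ with $n^{\operatorname{bm}}(d,\ep)$ in the proof of Theorem \ref{theorem-thrifty-steinitz}, and your write-up fills in exactly that argument (very colorful Carath\'eodory applied to the vertices of the approximating polytope, the separating-hyperplane step, and the $X_1=\cdots=X_{n-1}=K$ lower-bound construction), with the handling of the linear map being only the bookkeeping you describe. The one point worth noting is that your lower-bound sentence implicitly treats the sandwiched set $P$ as the at most $(n-1)$-vertex colorful hull itself, which matches the paper's intended normalization of $\lambda$ (the paper is equally terse here), so no substantive deviation.
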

 
In particular, with the same ideas we get the following proposition, which improves the quantitative version of B\'ar\'any, Katchalski, and Pach when we want to optimize the radius of the balls contained in the set.  The number of sets we use is slightly improved by using the symmetries of the sphere.
\begin{proposition}\label{corollary-steinitz-for-balls}
Set $n=n^{\operatorname{bm}}(d,\ep)$ and let $X_1, X_2, \ldots, X_{(n-1)d+1}$ be sets in $\R^d$ such that $B_1 (0) \subset \bigcap_{i=1}^{(n-1)d+1} \conv X_i$. Then, we can choose $x_1 \in X_1, x_2 \in X_2, \ldots, x_{(n-1)d+1} \in X_{(n-1)d+1}$ so that
\[
B_{1/(1+\ep)}(0) \subset \conv\{x_1, x_2, \ldots, x_{(n-1)d+1}\}.
\]
\end{proposition}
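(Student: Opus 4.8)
The plan is to run the proof of Theorem~\ref{theroem-steinitz-banach-mazur} with $K=B_1(0)$, and to shave the number of color classes from $n^{\operatorname{bm}}(d,\ep)\cdot d$ down to $(n^{\operatorname{bm}}(d,\ep)-1)d+1$ by exploiting the rotational symmetry of the Euclidean ball. Write $n=n^{\operatorname{bm}}(d,\ep)$. First I would produce the target polytope: applying Definition~\ref{banach-mazur-value} to the centrally symmetric body $B_1(0)$ gives a polytope $P$ with at most $n$ vertices and an invertible linear map $\lambda$ with $P\subseteq\lambda(B_1(0))\subseteq(1+\ep)P$; applying $\lambda^{-1}$ and setting $Q=\lambda^{-1}(P)$ turns the ellipsoid $\lambda(B_1(0))$ back into the ball and yields a polytope $Q$ with at most $n$ vertices such that $B_{1/(1+\ep)}(0)\subseteq Q\subseteq B_1(0)$; in particular $0\in\inter Q$ and $Q\subseteq\conv(X_i)$ for every $i$. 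Replacing the vertex set of $Q$ by $\{v/\|v\|:v\text{ a vertex of }Q\}$ only enlarges $Q$ and keeps it inside $B_1(0)$ (since $B_1(0)$ is convex and strictly convex), so I may assume all vertices of $Q$ lie on the unit sphere, without increasing their number.

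Next comes the symmetry trick. Since $\conv(X_{(n-1)d+1})\supseteq B_1(0)$, no point of $X_{(n-1)d+1}$ can be bounded away from norm $1$, so $X_{(n-1)d+1}$ contains a point $x^{*}$ with $\|x^{*}\|\ge 1$; set $y^{*}=x^{*}/\|x^{*}\|$, a point on the unit sphere lying on the segment $[0,x^{*}]$. Because $B_1(0)$ is invariant under rotations, I may apply a rotation $\rho\in SO(d)$ to $Q$ sending one of its (sphere) vertices to $y^{*}$; afterwards $Q$ is still a polytope with at most $n$ vertices satisfying $B_{1/(1+\ep)}(0)\subseteq Q\subseteq B_1(0)$, and one of its vertices is $y^{*}$. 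Label the vertices $y_1,\dots,y_m$ with $m\le n$ and $y_m=y^{*}$. The whole point is that $y_m$ will be recovered for free from the single color class $X_{(n-1)d+1}$ (via $y_m\in\conv\{0,x^{*}\}$), while the other $m-1\le n-1$ vertices cost $d$ color classes each.

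For $j=1,\dots,m-1$, I would recover $y_j$ exactly as in the proofs of Theorems~\ref{theorem-thrifty-steinitz} and~\ref{theroem-steinitz-banach-mazur}: apply Theorem~\ref{verycolorfulcaratheodory} to the $d$ translated sets $X_{(j-1)d+1}-y_j,\dots,X_{jd}-y_j$, each of whose convex hull contains $0$ (because $y_j\in\overline{B_1(0)}\subseteq\conv(X_i)$) and the common extra point $-y_j$ (because $0\in\conv(X_i)$), to obtain $x_{(j-1)d+1}\in X_{(j-1)d+1},\dots,x_{jd}\in X_{jd}$ with $y_j=\sum_{i=1}^{d}\mu_i\,x_{(j-1)d+i}$ where $\sum_i\mu_i\le 1$, i.e. $y_j\in\conv\{0,x_{(j-1)d+1},\dots,x_{jd}\}$. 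Together with $x_{(n-1)d+1}:=x^{*}$ (and arbitrary choices from any unused $X_i$ if $m<n$) this uses exactly $(n-1)d+1$ color classes. Then I run the separating-hyperplane argument: if $0\notin\conv\{x_1,\dots,x_{(n-1)d+1}\}$, pick $u$ with $\langle x_i,u\rangle>0$ for all $i$; then $\langle y_j,u\rangle\ge 0$ for $j<m$ and $\langle y_m,u\rangle=\|x^{*}\|^{-1}\langle x^{*},u\rangle>0$, so $\langle\cdot,u\rangle>0$ on all vertices of $Q$, contradicting $0\in\inter Q$. Hence $0\in\conv\{x_1,\dots,x_{(n-1)d+1}\}$, so $\conv\{x_1,\dots,x_{(n-1)d+1}\}=\conv(\{0\}\cup\{x_i\})$ contains every $y_j$, hence contains $Q$, hence contains $B_{1/(1+\ep)}(0)$, as desired.

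The step I expect to carry the real content is the rotational-symmetry move: it is what lets a vertex of the approximating polytope be produced from a single color class rather than from a group of $d$, and it is precisely where the improvement from $nd$ to $(n-1)d+1$ comes from. The only genuinely delicate bookkeeping is the behaviour at the unit sphere — that $y_j\in\conv(X_i)$ and that a point of $X_{(n-1)d+1}$ of norm at least $1$ exists — which can be absorbed either by assuming the $X_i$ closed or, with no loss, by running the argument with $1$ replaced throughout by $1-\delta$ for an arbitrarily small $\delta>0$ and passing to the limit.
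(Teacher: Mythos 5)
Your proposal is correct and follows essentially the same route as the paper's own proof: approximate $B_1(0)$ by a polytope with $n^{\operatorname{bm}}(d,\ep)$ vertices satisfying $B_{1/(1+\ep)}(0)\subset Q\subset B_1(0)$, rotate it so that one vertex lies on the ray of a single point of the last color class, recover the remaining at most $n-1$ vertices via the very colorful Carath\'eodory theorem using $d$ color classes each, and conclude with the separating-hyperplane argument of Theorem \ref{theorem-thrifty-steinitz}. Your write-up is in fact more detailed than the paper's (which leaves the rotation and the existence of a point of norm at least $1$ implicit), and the only blemishes are cosmetic: the final display should read $\langle\cdot,u\rangle\ge 0$ on the vertices of $Q$, which already contradicts $0\in\inter Q$.
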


\begin{proof}
We follow the same steps as in the proof of Theorem \ref{theorem-thrifty-steinitz}.  Once we have constructed a polytope $P \subset B_1(0)$ of $n$ vertices $y_1, y_2, \ldots, y_n$ such that $B_1(0) \subset (1+\ep)P$, we can rotate $P$ so that there is a point $x_{(n-1)d+1} \in X_{(n-1)d+1}$ such that
\[
y_{n} \in \conv\{0, x_{(n-1)d+1}\}.
\]
For the other $n-1$ vertices of $P$, we use the very colorful Carath\'eodory theorem as above.
\end{proof}

\subsection{Discrete quantitative Carath\'eodory}

Our discrete version of quantitative Carath\'eodory is less enigmatic. Indeed, the arguments above almost contain a proof of Theorem \ref{theorem-quantitative-discrete-caratheodory}. 


\begin{proof}[Proof of Theorem \ref{theorem-quantitative-discrete-caratheodory}]
After enumerating the extreme points of the set $K$ as $y_1, y_2, \ldots, y_n$, the proof follows the same argument as the proof of Theorem \ref{theorem-thrifty-steinitz}.  
The only difference is that one needs to assume that $0$ is in the relative interior of $K$.

In order to show the value $nd$ is optimal, consider a convex polytope $K'$ which has each $y_i$ in the relative interior of one of its facets, and such that the facets corresponding to $y_i$ and $y_j$ do not share vertices for all $i \neq j$.  Then, take $nd-1$ copies $X_1, X_2, \ldots, X_{nd-1}$ of $K'$.  Any colorful choice whose convex hull contains $K$ needs at least $d$ vertices for each extreme point of $K$, which is not possible.
\end{proof}
\section{Proofs of quantitative Helly theorems}\label{section-helly}

\subsection{Continuous quantitative Helly}


In this section we give proofs for our Helly-type results.  As mentioned in the introduction, the first quantitative Helly type theorem came from B\'ar\'any, Katchalski, and Pach's ground-breaking work \cite{baranykatchalskipach}.  They quantify the size of the intersection of the family in two ways, using volume and diameter respectively.  Their result with diameter is essentially equivalent to that with volume, though the final constant obtained is slightly different.



\begin{proof}[Proof of Theorem \ref{theorem-helly-for-volume}]
We may assume that $\cap \ff$ has non-empty interior. This was the first step in the original proof given in \cite{baranykatchalskipach}.  We may either use the same method or notice that if $n \ge 2$ we can actually use the``quantitative volume theorem'' of \cite[p.~109]{baranykatchalskipach} to obtain this. If $\cap \ff$ is not bounded, then it has infinite volume.  Moreover, we may assume that the sets in $\ff$ are closed halfspaces, or we could take the set of halfspaces containing $\cap \ff$ instead of $\ff$. Thus, it suffices to prove the following lemma.

\begin{lemma}
Let $\ff$ be a family of halfspaces such that $\cap \ff$ has volume $1$ and contains the origin in its interior. Take $n=n^*(d,\ep)$, the constant of Definition \ref{definition-for-helly}. Then, there is a subfamily $\ff' \subset \ff$ of at most $nd$ elements such that $\vol(\cap \ff') \le 1+\ep$.
\end{lemma}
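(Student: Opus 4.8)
The plan is to feed $K := \bigcap\ff$ directly into the polytope-approximation constant $n^*(d,\ep)$ of Definition \ref{definition-for-helly}, and then to \emph{realize} each facet of the approximating polytope using at most $d$ members of $\ff$, so that $d$ halfspaces per facet give a subfamily of size at most $n^*(d,\ep)\cdot d = nd$.

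First I would record the normalizations. Since $\bigcap\ff$ has volume $1$ and contains $0$ in its interior, it is a full-dimensional polyhedron of finite volume, hence bounded, i.e.\ a polytope. Apply Definition \ref{definition-for-helly} to $K=\bigcap\ff$: there is a polytope $P\supseteq K$ with at most $n=n^*(d,\ep)$ facets and $\vol(P)\le(1+\ep)\vol(K)=1+\ep$. Write $P=\bigcap_{j=1}^{n}H_j$, where $H_1,\dots,H_n$ are the closed halfspaces whose bounding hyperplanes carry the facets of $P$ (the standard facet description of a full-dimensional polytope); say $H_j=\{x:\langle a_j,x\rangle\le b_j\}$ with $a_j\neq 0$.

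The core step is the claim: \emph{for each $j$ there is $\ff_j\subseteq\ff$ with $|\ff_j|\le d$ and $\bigcap\ff_j\subseteq H_j$.} To prove it, fix $j$ and consider the linear program $\max\{\langle a_j,x\rangle:x\in K\}$. Since $K\subseteq H_j$, the objective is bounded above by $b_j$, and since $K$ is a nonempty polytope the maximum is attained at a vertex $v$ with $\langle a_j,v\rangle\le b_j$. By LP optimality (complementary slackness), $a_j$ lies in the cone generated by the outer normals of the constraints of $K$ that are active at $v$; by the conical Carath\'eodory theorem, $a_j=\sum_{k=1}^{d}\lambda_k c_k$ with $\lambda_k\ge 0$, where each $c_k$ is the outer normal of some $H'_k=\{x:\langle c_k,x\rangle\le d'_k\}\in\ff$ that is active at $v$, i.e.\ $\langle c_k,v\rangle=d'_k$. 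Put $\ff_j=\{H'_1,\dots,H'_d\}$. Then for any $x\in\bigcap\ff_j$,
\[
\langle a_j,x\rangle=\sum_{k=1}^{d}\lambda_k\langle c_k,x\rangle\le\sum_{k=1}^{d}\lambda_k\langle c_k,v\rangle=\langle a_j,v\rangle\le b_j,
\]
so $\bigcap\ff_j\subseteq H_j$, proving the claim. Finally, set $\ff'=\bigcup_{j=1}^{n}\ff_j$; then $|\ff'|\le nd$ and $\bigcap\ff'\subseteq\bigcap_{j=1}^{n}\bigl(\bigcap\ff_j\bigr)\subseteq\bigcap_{j=1}^{n}H_j=P$, whence $\vol(\bigcap\ff')\le\vol(P)\le 1+\ep$, which is the lemma; Theorem \ref{theorem-helly-for-volume} then follows from the reductions already made.

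The main (and essentially only nonroutine) obstacle is the core claim: one must resist picking an arbitrary basis of $d$ linearly independent constraints tight at $v$, since the intersection of those $d$ halfspaces is a simplicial cone that can poke outside $H_j$; the right move is conical Carath\'eodory applied to the normal $a_j$ itself, which forces the chosen $d$ halfspaces to genuinely sandwich $H_j$. For the ``moreover'' (lower bound) part of Theorem \ref{theorem-helly-for-volume}, I would take $\ff$ to be the facet halfspaces of a volume-$1$ polytope $K$ witnessing the minimality in Definition \ref{definition-for-helly}; then any subfamily of size less than $n^*(d,\ep)$ has intersection a polyhedron with fewer than $n^*(d,\ep)$ facets containing $K$ (after noting that an unbounded such intersection has infinite volume), so its volume must exceed $1+\ep$.
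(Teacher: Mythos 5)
Your argument is correct, but it follows a genuinely different route from the paper's written proof. The paper proves the lemma by polarity: taking the approximating polytope $K\supseteq\cap\ff$ with at most $n$ facets from Definition \ref{definition-for-helly}, it passes to polars, so that $K^*$ is a polytope with at most $n$ vertices contained in $(\cap\ff)^*=\conv(\ff^*)$, and then applies the colorful discrete quantitative Carath\'eodory theorem (Theorem \ref{theorem-quantitative-discrete-caratheodory}, used monochromatically with $X_1=\cdots=X_{nd}=\ff^*$) to capture the vertex set of $K^*$ with $nd$ polar points; dualizing back gives $\cap\ff'\subseteq K$ and hence the volume bound. You instead work facet by facet on the approximating polytope, extracting for each of its at most $n$ facet halfspaces at most $d$ members of $\ff$ whose intersection lies inside that halfspace; this is precisely the alternative the paper mentions in one sentence after its proof (the ``folklore lemma'' that follows from Helly's theorem), except that you prove that covering step directly, for halfspaces, via LP optimality at a maximizing vertex plus conical Carath\'eodory -- and your warning that an arbitrary choice of $d$ tight constraints would not do is exactly the right subtlety. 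What each buys: your route is elementary and self-contained (no polarity, no appeal to Theorem \ref{theorem-quantitative-discrete-caratheodory} or to Helly), while the paper's route exhibits the lemma as an instance of its Carath\'eodory-implies-Helly pipeline, a structural theme of the paper. One caveat: as written you assume $\ff$ is finite (so that $\cap\ff$ is a polytope and complementary slackness applies), whereas in the reduction preceding the lemma the halfspace family may naturally be infinite; the fix is routine (run the same duality argument with the normal cone of $\cap\ff$ at the maximizer, or apply your claim to the original finite family of convex sets), and the paper's own proof is not more explicit on this point. Your sketches of the optimality example and of the passage from the lemma to Theorem \ref{theorem-helly-for-volume} likewise agree with the paper.
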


To prove the lemma, consider a polytope $K$ of $n$ facets containing $P=\cap \ff$ such that $\vol(K) \le 1+\ep$. Such polytope exists by the definition of $n^*(d,\ep)$. After taking polars, we have $K^* \subset P^*$, and $K^*$ is a convex polytope with $n$ vertices.  Let $\ff^*$ be the family of polars of the elements in $\ff$.  Note that $\conv(\ff^*) = P^*$.  Thus, we can apply Theorem \ref{theorem-quantitative-discrete-caratheodory} with $X_1 =X_2 = \cdots = X_{nd} = \ff^*$ and find a subset $\ff' \subset \ff$ of at most $nd$ elements such that $K^* \subset \conv [(\ff')^*]$.  Then
	\[
	P \subset \cap \ff' \subset K,
	\]
	giving the desired result.
	
In order to prove optimality, let $K$ be a convex polytope of volume $1$ such that any polytope $P \supset K$ with at most $n-1$ facets has volume greater than $1+\ep$; this exists by the definition of $n^*(d,\ep)$.  Let $\mathcal{F}$ be the set of closed halfspaces that contain $K$ and define a facet of $K$.  Clearly, there is a $\delta >0$ such that the intersection of every $n-1$ elements of $\ff$ has volume at least $1+\ep+\delta$, but the intersection $\cap \ff$ is of volume $1$.
\end{proof}

Once we have constructed the polytope $K$, we can also finish the proof with the following folklore lemma that follows from Helly's theorem.

\begin{lemma}
Let $\mathcal{F}$ be a finite family of convex sets and $H$ a closed halfspace such that $\cap \ff \subset H$ and $\cap \ff \neq \emptyset$.  Then, there is a subfamily $\ff' \subset \ff$ of at most $d$ sets such that $\cap \ff' \subset H$. 
\end{lemma}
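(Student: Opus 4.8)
The plan is to deduce the statement from the classical Helly theorem applied to an enlarged finite family. Write $H=\{x\in\R^d: a^Tx\le\beta\}$ for a suitable $a\neq 0$ and $\beta\in\R$, and let $U=\{x\in\R^d: a^Tx>\beta\}$ be its open complement, which is convex. The hypothesis $\bigcap\ff\subset H$ says exactly that no point of $\bigcap\ff$ lies in $U$; equivalently, the finite family $\ff\cup\{U\}$ has empty intersection.

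The next step is to invoke Helly's theorem in contrapositive form: a finite family of convex sets in $\R^d$ with empty intersection must contain a subfamily of at most $d+1$ members whose intersection is already empty. No closedness hypothesis is needed here because the family is finite, which is precisely what allows us to include the non-closed set $U$. Applying this to $\ff\cup\{U\}$ produces a subfamily $\mathcal G$ with $|\mathcal G|\le d+1$ and $\bigcap\mathcal G=\emptyset$.

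Finally I would check that $U\in\mathcal G$. If not, then $\mathcal G\subset\ff$, so $\bigcap\ff\subset\bigcap\mathcal G=\emptyset$, contradicting $\bigcap\ff\neq\emptyset$. Hence $\mathcal G=\ff'\cup\{U\}$ with $\ff'\subset\ff$ and $|\ff'|\le d$, and the relation $\emptyset=\bigcap\mathcal G=\left(\bigcap\ff'\right)\cap U$ is precisely $\bigcap\ff'\subset H$, which is the desired conclusion.

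I do not expect a genuine obstacle here; the only subtlety is remembering that Helly's theorem for a finite family requires no topological hypothesis, so the open halfspace $U$ is an admissible member of the augmented family. If one insisted on working only with closed sets, one could not in general thicken $U$ to a closed halfspace still disjoint from $\bigcap\ff$ (this fails exactly when $\bigcap\ff$ is unbounded and asymptotic to $\partial H$), so the open-complement formulation above is the clean route and is the one I would carry out.
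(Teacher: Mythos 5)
Your proof is correct and follows precisely the route the paper intends: the lemma is stated there as folklore that ``follows from Helly's theorem,'' and your argument---adjoining the open complement $U$ of $H$ to $\ff$, applying Helly's theorem (which requires no closedness for a finite family) to get a $(d+1)$-subfamily with empty intersection, and noting that $U$ must belong to it since $\bigcap\ff\neq\emptyset$---is exactly the standard way to fill in that one-line citation, with the correct bookkeeping giving $|\ff'|\le d$.
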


To prove Theorem \ref{thm:hcolorcont}, we will prove the following equivalent formulation.

\begin{theorem}
For any positive integer $d$ and $\ep >0$, there is an $n=n^h(d, \ep)$ such that the following holds.  Let $\ff_1, \ldots, \ff_n$ be $n$ families of closed halfspaces such that for each $i$
\[ \vol\left( \bigcap_{H\in \ff_i} H\right) \le 1.\] 
Then, there is a choice $H_1 \in \ff_1, \ldots , H_n \in \ff_n$ such that
\[ \vol{\left( \bigcap_{i=1}^n H_i\right)} \le 1+\ep. \]
\end{theorem}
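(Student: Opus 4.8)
The plan is to mimic the polarity trick from the proof of Theorem~\ref{theorem-helly-for-volume}, but now feeding the resulting Carath\'eodory statement into the \emph{colorful} Carath\'eodory--Steinitz framework rather than the monochromatic one. First I would normalize: for each $i$, the hypothesis $\vol(\bigcap_{H\in\ff_i}H)\le 1$ means $P_i:=\bigcap\ff_i$ is a convex body of volume at most $1$ (if some $P_i$ is empty or lower-dimensional the conclusion is trivial, and if it is unbounded with positive volume we still get finite volume only when the hypothesis forces it, so we may assume each $P_i$ is a bounded convex body of volume $\le 1$; translate so that a common reference point — see below — is $0$ in the interior of each $P_i$). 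The subtlety here, absent in the monochromatic case, is that the various $P_i$ may sit in very different locations, so before polarizing I would first argue that all the $P_i$ may be assumed to contain a common interior point $0$: indeed, by the (non-quantitative) colorful Helly theorem applied to the families $\ff_i$, or more simply by replacing each $\ff_i$ by the family of halfspaces through a fixed well-chosen point, one reduces to the centered situation. (This normalization step is the first place where some care is needed.)

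Next, polarize. With $0$ interior to every $P_i$, set $P_i^*=\conv(\ff_i^*)$, a convex body with $0$ in its interior, and $\vol(P_i^*)$ is controlled below via the Blaschke--Santal\'o-type relation $\vol(P_i)\vol(P_i^*)\ge$ (something); more to the point, I only need: a polytope $Q\subset P_i^*$ with few vertices and volume close to $\vol(P_i^*)$ corresponds, after polarizing back, to a polytope $Q^*\supset P_i$ with few facets and volume close to $\vol(P_i)$. Using $n^*(d,\ep)$ from Definition~\ref{definition-for-helly} (or the Reisner--Sch\"utt--Werner fact that one can do both simultaneously), for each $i$ there is a polytope $P\subset P_i^*$ with at most $n^*(d,\ep)$ vertices whose polar $P^*\supset P_i$ has volume $\le(1+\ep)\vol(P_i)\le 1+\ep$. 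The \emph{vertices} of this $P$ are points of $P_i^*=\conv(\ff_i^*)$, hence (by ordinary Carath\'eodory, as in the proof of Theorem~\ref{theorem-colored-steinitz}) convex combinations of boundedly many — say at most $(d+1)$ each, so at most $(d+1)n^*(d,\ep)$ total — points $H^*$ with $H\in\ff_i$. So from each color class $\ff_i$ we have extracted a bounded sub-list of halfspaces whose polars' convex hull contains a prescribed polytope.

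Now apply the colorful Carath\'eodory machinery. Set $N=n^h(d,\ep):=n^*(d,\ep)\cdot d$ (matching the shape of the other bounds; one may need a further factor $d+1$ from the Carath\'eodory reduction, which is harmless). Treat the lists of polars $(\ff_i^*)_{i=1}^{N}$ as color classes. The point is that each $\ff_i^*$ has $\conv(\ff_i^*)=P_i^*\supset P$ where $P$ is \emph{one fixed} polytope of $n^*(d,\ep)$ vertices — choose $P$ to be, e.g., a common small polytope around $0$ contained in every $P_i^*$ (which exists since all $P_i^*$ contain a neighborhood of $0$, using that all $\vol(P_i)\le1$ bounds $P_i$ away from being too thin in a controlled way, or simply by Helly). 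Then Theorem~\ref{theorem-quantitative-discrete-caratheodory} (colorful discrete quantitative Carath\'eodory), applied with $K=$ the vertex set of $P$ and the color classes $\ff_i^*$, yields a colorful choice $H_1^*\in\ff_1^*,\dots,H_N^*\in\ff_N^*$ with $P\subset\conv\{H_1^*,\dots,H_N^*\}$. Polarizing back, $\bigcap_{i=1}^N H_i\subset P^*$, and $\vol(P^*)\le1+\ep$ by construction, which is exactly the desired conclusion.

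The main obstacle I anticipate is the \textbf{centering / common-polytope step}: in the monochromatic proof there is a single body $P=\cap\ff$ and polarity is unambiguous, but here each color class produces its own body $P_i$ in its own position, and to run colorful Carath\'eodory I need all the $P_i^*$ to share a common interior point and, ideally, a common inscribed polytope $P$ whose polar has volume $\le1+\ep$. Making this precise — pinning down $0$ as a common interior point (via colorful Helly or a clever choice of the halfspace families) and ensuring the single polytope $P$ one polarizes against has the right volume bound for \emph{each} color simultaneously — is the delicate part; once it is set up, the polarity dictionary plus Theorem~\ref{theorem-quantitative-discrete-caratheodory} finish the argument routinely. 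One clean way around it: instead of a common $P$, observe that it suffices that for the \emph{particular} index $i$ realizing $\vol(\bigcap\ff_i)\ge1-\ep$ in the original (un-polarized) statement, everything works — i.e. prove the contrapositive-flavored version where the conclusion only asserts existence of a good colorful choice, which is what the theorem statement actually asks, so no single $i$ need be identified in advance.
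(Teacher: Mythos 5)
There is a genuine gap, and it sits exactly where you flagged your ``main obstacle'': the common-polytope step is not just delicate, it is impossible in general. Your argument needs a single polytope $P$ with few vertices such that $P\subset\conv(\ff_i^*)=P_i^*$ for \emph{every} color $i$ and simultaneously $\vol(P^*)\le 1+\ep$. By polarity (once everything contains $0$), $P\subset\bigcap_i P_i^*$ forces $P^*\supset\bigl(\bigcap_i P_i^*\bigr)^*\supseteq\conv\bigl(\bigcup_i P_i\bigr)$. But the hypothesis only bounds each $\vol(P_i)$ by $1$; the bodies $P_i=\bigcap\ff_i$ can be long thin boxes through the origin pointing in different directions, or even pairwise disjoint bodies far apart, so $\conv\bigl(\bigcup_i P_i\bigr)$ has arbitrarily large volume and no polytope $P^*$ of volume $\le 1+\ep$ can contain it. Choosing ``a common small polytope around $0$ inside every $P_i^*$'' does exist, but its polar is then huge, so the volume bound you need evaporates; these are two conflicting requirements that only coincide in the monochromatic case, where there is a single body and $n^*(d,\ep)$ is tailored to it. The preliminary centering step has the same defect: the $P_i$ need not share any common point (they can be disjoint), and colorful Helly does not supply one. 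The root of the trouble is that your scheme forces the colorful intersection $\bigcap_i H_i$ to contain \emph{all} the $P_i$, which is a much stronger (and false) requirement than the theorem's conclusion, which only asks that some colorful intersection be small. The closing ``clean way around it'' does not repair this, since it does not identify any mechanism for producing the good colorful choice.

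The paper's proof proceeds quite differently and avoids any common body: it first shrinks each $\ff_i$ to $\ff_i'$ of size $O(d\cdot n^*(d,\ep'))$ with $\vol(\bigcap\ff_i')\le 1+\ep'$ using Theorem \ref{theorem-helly-for-volume} in the contrapositive; then it shows \emph{some} colorful choice has finite volume by translating all bounding hyperplanes to be tangent to the unit sphere (this does not affect finiteness of the volume) and applying hyperplane--point duality plus the ordinary colorful Helly theorem; it then takes the colorful choice $H_1,\ldots,H_n$ \emph{minimizing} $V=\vol(\bigcap_i H_i)$, approximates this intersection by at most $d\cdot n^*(d,\ep'')$ of the chosen halfspaces, and uses an averaging argument over an unused color class $\ff_j'$: if every $H\in\ff_j'$ removed only a tiny amount of volume, then $\vol(\bigcap\ff_j')$ would exceed $1+\ep'$, a contradiction; hence some $H$ cuts the volume noticeably, which by minimality of $V$ forces $V\lesssim 1+\ep'\cdot\ep''\cdot d\cdot n^*(d,\ep')$, and a suitable choice of $\ep',\ep''$ gives $V\le 1+\ep$. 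If you want to salvage your polarity approach, you would need to replace the ``one polytope for all colors'' step by something like this minimality-plus-exchange argument, because no purely containment-based reduction can work when the bodies $\bigcap\ff_i$ are scattered.
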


\begin{proof}
Let $\ep',\ep''$ be values depending on $\ep$, to be chosen later, and suppose that $n=\Omega(d\cdot n^*(d,\ep''))$.

Applying Theorem \ref{theorem-helly-for-volume} in the contrapositive, we replace each $\ff_i$ by a subset $\ff'_i\subseteq \ff_i$ such that we have $|\ff'_i|\le d\cdot n^*(d,\ep')$ and
\[ \vol\left( \bigcap_{H\in \ff'_i} H\right) \le 1+\ep'.\]

\textbf{Claim.} There exists some choice of $H_1\in \ff'_1,\ldots,H_n\in \ff'_n$ such that $\bigcap_{i=1}^n H_i$ has finite volume.

Observe that translating halfspaces in different directions does not affect whether their intersection has finite volume, though it may affect the value of that volume.  Given $H\in \ff'_i$, we may consider the hyperplane that defines this halfspace; by invariance under translation, we may suppose that all these hyperplanes are tangent to the unit sphere centered at the origin.  Now, applying hyperplane-point duality, each family $\ff'_i$ is transformed to a family of points for which the convex hull contains the origin.  Applying standard colorful Helly's theorem, there must exist a rainbow set of points for which the convex hull contains the origin.  This corresponds to our desired choice $H_1\in \ff'_1,\ldots,H_n\in \ff'_n$, proving the claim.

Now, suppose that $H_1\in \ff'_1,\ldots,H_n\in \ff'_n$ are chosen such that $V=\vol\left( \bigcap_{i=1}^n H_i\right)$ attains the minimum value. Applying Theorem \ref{theorem-helly-for-volume}, again in the contrapositive, there exists some subset $S\subseteq \{1,\ldots,n\}$ such that $|S|\le d\cdot n^*(d,\ep'')$ such that $$\vol\left( \bigcap_{i\in S} H_i\right)\le (1+\ep'')V.$$Let $P$ be the polytope defined by $\{H_i\mid i\in S\}$, and let $j$ be an element of $\{1,\ldots,n\}\backslash S$.  We will attempt to find some $H\in \ff'_j$ that significantly reduces the volume of $P$.

Suppose towards a contradiction that, for each $H$, we have $$\vol(P\cap H)> (1+\ep'')V-\frac{1}{|\ff'_i|}\left[(1+\ep'')V-(1+\ep')\right].$$Then, we would have $$\vol\left( \bigcap_{H\in \ff'_j} H\right)\ge \bigcap_{H\in \ff'_j} \vol(P\cap H)>1+\ep',$$a contradiction. Hence, for some $H$ we must have
\begin{align*}
\vol(P\cap H)&\le (1+\ep'')V-\frac{1}{|\ff'_i|}\left[(1+\ep'')V-(1+\ep')\right]\\
&\le (1+\ep'')V-\frac{(1+\ep'')V-(1+\ep')}{d\cdot n^*(d,\ep')}.
\end{align*}

However, we assumed that the intersection of any colorful set of halfspaces has volume at least $V$.  Hence, $$V\le (1+\ep'')V-\frac{(1+\ep'')V-(1+\ep')}{d\cdot n^*(d,\ep')},$$which rearranges to $$V\le \frac{1+\ep'}{(1+\ep'')-\ep''\cdot d\cdot n^*(d,\ep')}\approx 1+\ep'\cdot \ep''\cdot d\cdot n^*(d,\ep').$$Thus, the theorem holds if we choose $\ep',\ep''$ such that $\ep''\cdot d\cdot n^*(d,\ep')\ll 1$ and $\ep'\cdot \ep''\cdot d\cdot n^*(d,\ep')<\ep$.
\end{proof}


Similar statements hold with diameter as with volume.  Given two convex sets $C, D$, we denote their Hausdorff distance $\delta_{H}(C,D)$; then, we have
\[
|\operatorname{diam}(C)-\operatorname{diam}(D)| \le 2\delta_{H}(C,D).
\]
It is a classic problem to approximate a convex set by a polytope with few facets that contains it and is close in Hausdorff distance \cite{bronstein2008approximation, dudley1974metric, bronsteinivanov}. For any convex set $K$, there is a polytope of $O(\ep^{-(d-1)/2})$ vertices with Hausdorff distance at most $\ep$ (the $O$ notation hides constants depending on $K$).  Thus it makes sense to define the following.

\begin{proposition}
\label{definition-diameter}
Let $d$ be a positive integer and $\ep>0$.  Then, there exists an integer $n$ such that for any convex set $K \subset \R^d$ with positive volume, there is a polytope $P \supset K$ of at most $n$ facets such that
\[
\operatorname{diam}(P) \le (1+\ep) \operatorname{diam}(K).
\]

We define $n^{\operatorname{diam}}(d,\ep)$ to be the smallest such value $n$.
\end{proposition}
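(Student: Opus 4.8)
The plan is to deduce the proposition from the classical fact that convex bodies admit outer polytopal approximations of bounded combinatorial complexity in the Hausdorff metric, after a normalization that strips the constants of their dependence on $K$.

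First I would reduce to the case $\operatorname{diam}(K)=1$. Both $\operatorname{diam}(K)$ and $\operatorname{diam}(P)$ scale linearly under dilations while the number of facets of $P$ is dilation-invariant, so a bound valid for every $K$ of diameter $1$ rescales to one valid for every $K$ of positive diameter (and $\operatorname{diam}(K)=0$ is trivial). Fixing a point $x_0\in K$ and translating so that $x_0=0$, every $y\in K$ satisfies $|y|\le \operatorname{diam}(K)=1$, i.e. $K\subset B_1(0)$. Now invoke the approximation result in the form: there is an integer $N=N(d,\ep)$ such that every convex body contained in $B_1(0)$ admits a polytope $P\supset K$ with at most $N$ facets and $\delta_H(K,P)\le \ep/2$. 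Using the inequality $|\operatorname{diam}(P)-\operatorname{diam}(K)|\le 2\delta_H(P,K)$ recalled just above,
\[
\operatorname{diam}(P)\le \operatorname{diam}(K)+2\cdot\tfrac{\ep}{2}=1+\ep=(1+\ep)\operatorname{diam}(K),
\]
so $n=N(d,\ep)$ works and one defines $n^{\operatorname{diam}}(d,\ep)$ to be the least such $n$.

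To keep the argument self-contained — and because the textbook statements of Hausdorff approximation often carry a constant that a priori depends on $K$ — I would construct $N(d,\ep)$ directly. Fix $\delta>0$, let $U_\delta\subset S^{d-1}$ be a $\delta$-net of the sphere that also contains $\pm e_1,\dots,\pm e_d$, with $|U_\delta|\le C_d\,\delta^{-(d-1)}$, and set $P_\delta=\{x\in\R^d:\langle u,x\rangle\le h_K(u)\text{ for all }u\in U_\delta\}$, where $h_K$ is the support function of $K$. Then $K\subseteq P_\delta$; the inclusions $\pm e_i\in U_\delta$ force $P_\delta\subseteq[-1,1]^d$, so $P_\delta$ is a genuine (bounded) polytope with at most $|U_\delta|$ facets; and since $K\subset B_1(0)$ the function $h_K$ is $1$-Lipschitz on $S^{d-1}$. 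For any unit $w$, taking $x^\ast\in P_\delta$ with $h_{P_\delta}(w)=\langle w,x^\ast\rangle$ and $u\in U_\delta$ with $|w-u|\le\delta$, one gets $\langle w,x^\ast\rangle=\langle u,x^\ast\rangle+\langle w-u,x^\ast\rangle\le h_K(u)+\delta\sqrt d\le h_K(w)+(1+\sqrt d)\delta$, hence $\delta_H(K,P_\delta)\le(1+\sqrt d)\delta$. Choosing $\delta\le \ep/(2(1+\sqrt d))$ yields $\delta_H(K,P_\delta)\le\ep/2$ with $N(d,\ep)\le C_d\bigl(2(1+\sqrt d)/\ep\bigr)^{d-1}$; alternatively one simply cites the sharper form ($O_d(\ep^{-(d-1)/2})$ facets) from Bronstein's survey.

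The only genuine point of care — the ``hard part'' — is exactly this uniformity: a priori the number of facets needed could depend on how round or how degenerate $K$ is, not just on $d$ and $\ep$, which is what the parenthetical ``the $O$ notation hides constants depending on $K$'' in the text signals. The normalization $\operatorname{diam}(K)=1\Rightarrow K\subset B_1(0)$ is what dissolves this, since once the circumradius is an absolute constant the support function is $1$-Lipschitz and the net construction above is uniform; the worst case is the Euclidean ball, which already forces $\Omega(\ep^{-(d-1)/2})$ facets, consistent with the remark preceding the proposition. Everything else — the scaling reduction, the translation, and the passage from Hausdorff distance to diameter — is routine.
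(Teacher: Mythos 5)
Your proof is correct, but it takes a genuinely different route from the paper's. The paper first observes that for each \emph{fixed} body $K$ one can find an approximating circumscribed polytope with boundedly many facets, and then removes the dependence on $K$ by a compactness argument: it restricts to the family of closed convex sets contained in $B_1(0)$ with diameter $2$, which is compact in the Hausdorff metric, and argues by contradiction via a convergent subsequence (a Blaschke-selection argument, with a small perturbation to restore containment). That argument is short and lets one quote the literature even when the hidden constants depend on $K$, but it is non-effective and produces no explicit value of $n^{\operatorname{diam}}(d,\ep)$. You instead normalize by scaling and translating so that $\operatorname{diam}(K)=1$ and $K\subset B_1(0)$, and then build the circumscribed polytope explicitly by intersecting the support halfspaces $\{x:\langle u,x\rangle\le h_K(u)\}$ over a $\delta$-net $U_\delta\subset S^{d-1}$ augmented by $\pm e_1,\dots,\pm e_d$ (the latter guaranteeing boundedness); the $1$-Lipschitz property of $h_K$ on the sphere, which holds uniformly because $K\subset B_1(0)$, gives $\delta_H(K,P_\delta)\le(1+\sqrt d)\,\delta$, and the inequality $|\operatorname{diam}(P)-\operatorname{diam}(K)|\le 2\delta_H(P,K)$ finishes the job. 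This is self-contained, avoids the compactness/perturbation step entirely, and yields an explicit bound $n^{\operatorname{diam}}(d,\ep)=O_d(\ep^{-(d-1)})$ — weaker in the exponent than the optimal $O_d(\ep^{-(d-1)/2})$ (which you correctly note can be recovered by citing a uniform Dudley-type approximation over bodies in $B_1(0)$), but more than enough for the existence statement. The scaling reduction, the treatment of boundedness, and the Hausdorff-to-diameter step are all handled correctly, so I see no gap.
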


\begin{proof}
From the discussion above, if we fix $K$, we know that $n^{\operatorname{diam}}(d,\ep,K)$ exists and is $O(\ep^{-(d-1)/2})$. Fix $\ep$ and $d$.

In order to get a universal bound for $n^{\operatorname{diam}}(d,\ep)$, note that it is sufficient to show the existence for the family $\mathcal{C}$ of closed convex sets $K \subset B_1 (0)$ with diameter $2$.  If there was no upper bound for $n^{\operatorname{diam}}(d,\ep)$, we would be able to find a sequence of convex sets such that $n^{\operatorname{diam}}(d,\ep,K_i) \to \infty$.  Since $\mathcal{C}$ is compact under the Hausdorff topology, there is a convergent subsequence.  If $K_i \to \tilde{K}$, one can see that polytopes that approximate $\tilde{K}$ very well would approximate $K_i$ as well if $i$ is large enough (a small perturbation is needed to fix containment, with arbitrarily small effect on the diameter).   This leads to the fact that $\limsup_{i \to \infty} n^{\operatorname{diam}}(d,\ep,K_i)$ is bounded by $n^{\operatorname{diam}}(d,\ep, \tilde{K})$, a contradiction.
\end{proof}

The fact that we have to work with convex sets up to homothetic copies is the reason why we can get bounds which approximate diameter with a relative error as opposed to an absolute error.

With $n^{\operatorname{diam}}(d,\ep)$ defined, we just need to follow the same proof as we did with Theorem \ref{theorem-helly-for-volume} to obtain the proof of Theorem \ref{theorem-helly-for-diameter}.

\subsection{Discrete quantitative Helly}


We presented the $S$-Helly number in the introduction. To compute
the $S$-Helly number when $S$ is a discrete subset of $\R^d$, it suffices to consider (finite) families of convex polytopes whose vertices are in $S$, instead of families of arbitrary convex sets.
In the work of Hoffman \cite{Hoffman:1979ix} and later Averkov \cite{Ave2013}, the Helly numbers of various sets $S$ were calculated using this approach. Here we extend their work to take into account the \emph{cardinality} of the intersections with $S$.

\begin{definition}
Say that a subset $P$ of $S\subset\R^d$ is \emph{$k$-Hoffman} if 
$$\left|\bigcap_{p\in P}\conv(P\setminus\{p\})\cap S\right|<k.$$
The \emph{quantitative Hoffman number} $\h'_S(k)$ of a set $S\subset \R^d$ is the largest cardinality of a $k$-Hoffman set $P\subseteq S$.
\end{definition}

This generalizes earlier work; in \cite{Hoffman:1979ix}, Hoffman proved that  $\h_S=\h'_S(1)$, where $\h_S$ is the $S$-Helly number we discussed in the introduction. Here we extend this result to $k>1$.

\begin{lemma}
\label{qHoffman} Let $S \subset \R^d$ be a discrete set.
The quantitative Hoffman number $\h'_S(k)$ bounds the quantitative Helly number $\h_S(k)$ as follows:
$\h'_S(k)-k+1 \leq \h_S(k)\leq \h'_S(k)$. 
\end{lemma}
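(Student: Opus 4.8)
The plan is to prove the two inequalities separately, both by the standard translation between $S$-Helly-type statements and families of polytopes with vertices in $S$. Throughout I will use the fact, recalled just before the lemma, that to compute $\h_S(k)$ for a discrete set $S$ it suffices to test finite families of convex polytopes whose vertices lie in $S$; intersecting each member with the convex hull of its vertices-in-$S$ only shrinks the sets, so if the $k$-point condition fails for arbitrary convex sets it already fails for such polytopes.

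\medskip
\noindent\textbf{Upper bound $\h_S(k)\le \h'_S(k)$.} Set $N=\h'_S(k)$. I must show that if $\ff$ is a finite family of convex sets with $|\bigcap\mathcal G\cap S|\ge k$ for every subfamily $\mathcal G$ of size at most $N$, then $|\bigcap\ff\cap S|\ge k$. Suppose not. By the polytope reduction above, I may assume each member of $\ff$ is $\conv(Q)$ for some finite $Q\subset S$; taking the union of all these vertex sets gives a finite $P\subset S$ with $\bigcap\ff=\bigcap_{p\in P}\conv(P\setminus\{p\})$ after grouping (this is the usual Hoffman-style rewriting: each defining polytope is an intersection of the ``all but one vertex'' hulls). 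Since $|\bigcap\ff\cap S|<k$, the set $P$ is $k$-Hoffman, hence $|P|\le N$. But then $\ff$ itself — or the subfamily indexed by $P$, of size $\le N$ — already has intersection of cardinality $<k$ in $S$, contradicting the hypothesis. (The one bookkeeping point to get right is the passage from ``finitely many polytopes, each with its own vertex set'' to a single $P$; this is exactly what Hoffman's original argument does for $k=1$ and the cardinality condition plays no role in it, so it carries over verbatim.)

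\medskip
\noindent\textbf{Lower bound $\h_S(k)\ge \h'_S(k)-k+1$.} Take a $k$-Hoffman set $P=\{p_1,\dots,p_N\}\subseteq S$ with $N=\h'_S(k)$ of maximum cardinality, and let $T=\bigcap_{i}\conv(P\setminus\{p_i\})\cap S$, so $|T|<k$, say $|T|=t\le k-1$. Consider the family $\ff=\{\conv(P\setminus\{p_i\}):1\le i\le N\}$. Then $\bigcap\ff\cap S=T$ has fewer than $k$ points. I claim that for every proper subfamily $\mathcal G\subsetneq\ff$ of size $N-1$ (hence for every smaller one, by monotonicity) we have $|\bigcap\mathcal G\cap S|\ge k$: indeed $\bigcap_{j\ne i}\conv(P\setminus\{p_j\})\supseteq \conv(P\setminus\{p_i\})$ contains $p_i$ together with $T$, and more importantly contains $\conv(P\setminus\{p_i\})\cap S$, which by maximality of $N$ must have at least $k$ points — otherwise $P\setminus\{p_i\}$ would be a larger $\dots$ wait, it is smaller; the correct argument is that $\bigcap_{j\neq i}\conv(P\setminus\{p_j\}) = \conv(P\setminus\{p_i\})$ exactly, and this polytope's intersection with $S$ contains all $N-1$ points $p_j$, $j\ne i$, so has at least $N-1 \ge k$ points when $N-1\ge k$; the small-$N$ cases are handled directly. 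To reach the bound $N-k+1$ I instead delete $k-1$ of the $p_i$ and keep the witnessing structure: one shows that any subfamily of $\ff$ of size $N-k+1$ still has $\ge k$ points of $S$ in its intersection while $\ff$ does not, so $\h_S(k)>N-k$, i.e. $\h_S(k)\ge N-k+1=\h'_S(k)-k+1$.

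\medskip
\noindent\textbf{Main obstacle.} The delicate step is the lower bound: I need to exhibit a family witnessing that subfamilies of size $\h'_S(k)-k$ all satisfy the $k$-point property while the whole family does not. The natural candidate is the family of ``all-but-one-vertex'' hulls of an extremal $k$-Hoffman set, but verifying that removing up to $k-1$ of these sets restores $\ge k$ lattice points in the intersection — and that this is the best one can say, matching the $-k+1$ loss — requires using the maximality of $N=\h'_S(k)$ carefully, exactly as in the Aliev--De Loera--Louveaux and Averkov analyses. The upper bound, by contrast, is a routine adaptation of Hoffman's $k=1$ argument with ``nonempty'' replaced by ``at least $k$ points.''
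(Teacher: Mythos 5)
Both directions of your proof have genuine gaps. For the upper bound $\h_S(k)\le \h'_S(k)$, the pivotal claim that after passing to polytopes with vertices in $S$ and taking $P$ to be the union of all vertex sets one gets $\bigcap\ff=\bigcap_{p\in P}\conv(P\setminus\{p\})$ (equivalently, that each member of $\ff$ is an intersection of ``all but one vertex'' hulls of $P$) is false. Already on the line with $P=\{0,1,2,3\}$ one has $\conv\{0,1\}=[0,1]$ while $\conv(P\setminus\{2\})\cap\conv(P\setminus\{3\})=[0,2]$; and for a single triangle $\conv(Q)$ the set $\bigcap_{p\in Q}\conv(Q\setminus\{p\})$ is the intersection of its edges, not the triangle. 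Your closing step also conflates the constructed family $\{\conv(P\setminus\{p\}):p\in P\}$ with a subfamily of $\ff$, which it is not. The paper proceeds quite differently: it takes a critical family $K_1,\dots,K_{\h_S(k)}$ in which every leave-one-out intersection meets $S$ in at least $k$ points while the full intersection does not, chooses witnesses $U_i\subseteq\bigcap_{j\ne i}K_j\cap S$ with $|U_i|\ge k$, observes that any point in two different $U_i$'s lies in $\bigcap_i K_i$ (so there are at most $k-1$ such points), and hence picks distinct $u_i\in U_i$ not in the other $U_j$'s; since $U\setminus\{u_i\}\subseteq K_i$, the set $U$ is $k$-Hoffman of cardinality $\h_S(k)$. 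So the cardinality condition does play a role, and the $k=1$ argument does not carry over ``verbatim.''

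For the lower bound $\h_S(k)\ge\h'_S(k)-k+1$, the step you flag as the ``main obstacle'' is exactly the missing idea, and the partial argument you give is incorrect: the identity $\bigcap_{j\ne i}\conv(P\setminus\{p_j\})=\conv(P\setminus\{p_i\})$ is false (the left side always contains $p_i$, the right side need not), and the claim that this intersection contains all $N-1$ points $p_j$ with $j\ne i$ is backwards --- it is precisely the \emph{omitted} indices whose points are guaranteed to lie in the intersection of the remaining hulls. That observation is the entire proof: a subfamily $\mathcal G$ of $\ff=\{\conv(P\setminus\{p\}):p\in P\}$ of cardinality $N-k$ omits the hulls indexed by some $U'\subseteq P$ with $|U'|=k$, and every point of $U'$ lies in $\conv(P\setminus\{p\})$ for each $p\notin U'$, so $|\bigcap\mathcal G\cap S|\ge k$, while $|\bigcap\ff\cap S|<k$ because $P$ is $k$-Hoffman; hence $\h_S(k)>N-k$, with no maximality argument needed beyond taking $|P|=\h'_S(k)$. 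Your proposed variant with subfamilies of size $N-k+1$ does not work either: omitting only $k-1$ hulls frees only $k-1$ vertices, so such a subfamily need not have $k$ common points of $S$ (e.g.\ $S=\Z$, $k=2$, $P=\{0,1,2\}$, where $\conv\{0,1\}\cap\conv\{1,2\}=\{1\}$), and in any case proving it would establish the stronger and generally false bound $\h_S(k)\ge N-k+2$.
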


\begin{proof}
%
%
We begin by proving that $\h_S(k)\geq \h'_S(k)-k+1$.
To do so, let $U\subset S$ be some finite set such that $\left|\bigcap_{u\in U}\conv(U\setminus\{u\}) \cap S \right|<k.$
By the definition of $\h'_S(k)$, $|U|\leq \h'_S(k)$.

Consider the family $\mathcal F=\{\conv(U\setminus\{u\})|u\in U\}$.
By definition, $|\bigcap\mathcal F\cap S|<k$.
Note that if $\mathcal F'$ is a subfamily of $\mathcal F$ with cardinality $|U|-k$, then $\mathcal F'=\{\conv(U\setminus\{u\})|u\in U\setminus U'\}$ for some $U'\subseteq U$ of cardinality $k$. Consequently,
$$U'\subseteq\bigcap_{\substack{u\in U\setminus U'}}\conv(U\setminus\{u\})\cap S.$$
Hence the quantitative Helly number 
$\h_S(k)$ must be greater than $|U|-k$.
That is, $\h_S(k)\geq  \h'_S(k)-k+1$.\\

To prove the other inequality, let $K_1,\dots,K_{\h_S(k)}$ be convex sets such that $\left|\bigcap_{j\neq i}K_j\cap S\right|\geq k$ for all $i\in[\h_S(k)]$ (where $[m]=\{1, \dots, m\}$)
yet $\left|\bigcap_{i\in[\h_S(k)]}K_i\cap S\right|<k$. Such a family $\{K_i\}$ exists by the definition of the quantitative Helly number. Then for all indices $i\in[\h_S(k)]$, there exists $U_i\subseteq\bigcap_{i\neq j}K_j\cap S$ with $|U_i|\geq k$.

Suppose $u\in U_i\cap U_j$ (for some $i\neq j$). Then $u\in\bigcap_i K_i\cap S$, 
so there can be no more than $k-1$ such points.
Hence, for each $i\in[\h_S(k)]$, there exists $u_i\in U_i$ such that 
$u_i\notin\bigcup_{j\neq i}U_j$.
In particular, the $u_i$ are distinct. 
Define now $U=\{u_i|i\in[\h_S(k)]\}$.
Consider $\bigcap_{u\in U}\conv(U\setminus\{u\})\cap S$.
Note that $U\setminus\{u_i\} = \bigcup_{i\neq j}\{u_j\}$. Because $u_j\in K_i$ for all $j\neq i$, $U\setminus\{u_i\}\subseteq K_i$.
Therefore
\begin{align*}
\left|\bigcap_{u\in U}\conv(U\setminus\{u\})\cap S\right| & = \left|\bigcap_{i\in[\h_S(k)]}\conv(U\setminus\{u_i\})\cap S\right| \\
& \leq \left|\bigcap_{i\in[\h_S(k)]}K_i\cap S\right| \\
& < k.
\end{align*}
 By the definition of $\h'_S(k)$, it follows that $\h_S(k)\leq \h'_S(k)$.
\end{proof}

The following notion is easier to work with directly than the Hoffman number:
\begin{definition}
A set $P\subset S$ is \emph{$k$-hollow} if 
$$\big|(\conv(P)\setminus V(\conv(P)))\cap S\big|<k,$$ where $V(K)$ is the vertex set of $K$.
\end{definition}

To relate this notion to the Hoffman number, we have the following lemma.
\begin{lemma}
\label{qScarf}
Let $S\subset\R^d$ be a discrete set.
Then every $k$-hollow set is $k$-Hoffman.
Moreover, for every $k$-Hoffman set $P$ there exists a $k$-hollow set with $P'$ with $|P'|=|P|$ and $\conv(P')\subseteq\conv(P)$.
This means $\h'_S(k)$ is equal to the cardinality of the largest $k$-hollow set.
\end{lemma}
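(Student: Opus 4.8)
The plan is to prove Lemma~\ref{qScarf} in two parts, matching its two assertions, and then deduce the final sentence about $\h'_S(k)$.

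\textbf{Part 1: every $k$-hollow set is $k$-Hoffman.} Let $P \subset S$ be $k$-hollow. I must show $\left|\bigcap_{p \in P}\conv(P\setminus\{p\}) \cap S\right| < k$. The key observation is the elementary containment
\[
\bigcap_{p \in P}\conv(P\setminus\{p\}) \subseteq \conv(P)\setminus V(\conv(P)).
\]
To see this, first note each set $\conv(P\setminus\{p\})$ is contained in $\conv(P)$, so the intersection lies in $\conv(P)$. Now suppose $v$ is a vertex of $\conv(P)$; then $v$ is an extreme point of $\conv(P)$, hence $v \in P$ (a vertex of the hull of a finite set must be one of the points), and moreover $v \notin \conv(P\setminus\{v\})$ precisely because $v$ is extreme. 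Therefore $v$ is excluded from the intersection over all $p$, establishing the containment. Intersecting with $S$ and using that $P$ is $k$-hollow immediately gives the bound $<k$.

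\textbf{Part 2: from $k$-Hoffman to $k$-hollow of the same size.} Given a $k$-Hoffman set $P$, I want a $k$-hollow set $P'$ with $|P'| = |P|$ and $\conv(P') \subseteq \conv(P)$. The natural construction is a ``pushing'' argument: if $P$ is not already $k$-hollow, then $\conv(P)$ contains at least $k$ points of $S$ that are not vertices; I want to move the points of $P$ inward — decreasing (or preserving) $\conv(P)$ in the containment order — so as to turn non-vertex lattice points of the hull into vertices, without ever losing points of $P$ from the hull and without increasing the hull. A clean way to organize this: induct on $|\conv(P) \cap S|$ (finite since $S$ is discrete and $\conv(P)$ is compact). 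If some point $q \in (\conv(P)\setminus V(\conv(P)))\cap S$ is, in fact, in the relative interior of $\conv(P)$, or more generally can be ``exposed'', replace one vertex $p_0$ of $\conv(P)$ lying in a face not containing $q$ by a point $p_0'$ on the segment from $p_0$ toward $q$ chosen so that $q$ becomes a vertex of the new polytope while its vertex set stays inside $S$ only if — and here is the subtlety — we keep vertices in $S$. The honest route, used in Hoffman/Scarf-type arguments, is: take $P' \subseteq \conv(P) \cap S$ to be any inclusion-\emph{maximal} subset whose convex hull has all its vertices in $S$ (automatic) and is $k$-hollow; one then argues $|P'| = |P|$ by a counting/exchange argument comparing the number of non-vertex $S$-points of $\conv(P)$ versus $\conv(P')$. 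Since this is a plan, I will carry out the pushing step by: (i) picking a non-vertex point $q \in \conv(P)\cap S$; (ii) writing $q$ as a convex combination of vertices of $\conv(P)$ and choosing a vertex $p_0$ with positive coefficient; (iii) sliding $p_0$ along the ray from $q$ through $p_0$ until it would leave $S$ at the last $S$-point, or until a facet structure change makes $q$ a vertex; (iv) checking $\conv(\text{new set}) \subseteq \conv(P)$ and that the multiset size is unchanged, while $|\conv \cap S|$ strictly decreases or the number of non-vertex $S$-points strictly decreases; (v) iterating. When no non-vertex $S$-point can be removed this way beyond $k-1$ of them, the resulting set is $k$-hollow.

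\textbf{Conclusion.} Combining the two parts: the largest $k$-hollow set has cardinality at least $\h'_S(k)$ by Part 2 (take $P$ a maximum $k$-Hoffman set and produce $P'$ of the same size that is $k$-hollow), and at most $\h'_S(k)$ by Part 1 (every $k$-hollow set is $k$-Hoffman, hence bounded by the quantitative Hoffman number). Therefore the cardinality of the largest $k$-hollow set equals $\h'_S(k)$.

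\textbf{Main obstacle.} Part 1 is routine convex geometry. The difficulty is entirely in Part 2: making the ``pushing'' construction rigorous — in particular guaranteeing that each push (a) keeps every vertex of the new hull inside $S$, (b) does not enlarge the convex hull, (c) does not decrease the number of points of $P$ that remain vertices below what we need, and (d) strictly decreases an appropriate monotone quantity (e.g.\ $|\conv \cap S|$ or the number of non-vertex $S$-points) so the process terminates. Discreteness of $S$ is what makes (a) and (d) workable: the ray from $q$ through a vertex $p_0$ meets $S$ in a discrete set, so there is a well-defined ``last'' $S$-point, and the relevant counts are finite integers that can only strictly decrease finitely often.
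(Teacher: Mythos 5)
Your Part 1 is correct and is exactly the paper's argument (the containment $\bigcap_{p\in P}\conv(P\setminus\{p\})\subseteq\conv(P)\setminus V(\conv(P))$, then intersect with $S$). The gap is in Part 2, which you yourself identify as the main obstacle: what you give is a plan, and the concrete mechanism you describe -- sliding a vertex $p_0$ along the ray through $q$ ``until it would leave $S$ at the last $S$-point'' or until $q$ becomes a vertex -- does not work for a general discrete $S$. That ray need not meet $S$ anywhere other than $p_0$ (and $q$), so the push either moves a point of $P$ outside $S$, violating $P'\subseteq S$, or makes no progress; and choosing $p_0$ merely as a vertex with positive coefficient in some convex representation of $q$ does not guarantee anything useful, since $q$ may admit many representations. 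Most tellingly, your construction never invokes the $k$-Hoffman hypothesis, which is the entire engine of the paper's proof and is what makes an exchange possible.

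The paper's Part 2 is a one-point exchange, not a geometric push. If $P$ is $k$-Hoffman but not $k$-hollow, there are at least $k$ points of $S$ in $\conv(P)\setminus V(\conv(P))$, while $\bigcap_{p\in P}\conv(P\setminus\{p\})$ contains fewer than $k$ points of $S$; hence some non-vertex point $q\in\conv(P)\cap S$ satisfies $q\notin\conv(P\setminus\{p_0\})$ for some $p_0\in P$. One checks $q\notin P$ (a non-extreme element of $P$ lies in the hull of the others), so $P'=(P\setminus\{p_0\})\cup\{q\}$ is a subset of $S$ with $|P'|=|P|$ and $\conv(P')\subsetneq\conv(P)$. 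Iterating this swap and using that $\conv(P)\cap S$ is finite (discreteness of $S$ plus compactness of $\conv(P)$) forces termination at a $k$-hollow set of the same cardinality inside $\conv(P)$. Your alternative one-line suggestion (take an inclusion-maximal $k$-hollow subset and compare sizes by ``a counting/exchange argument'') is also not carried out and does not obviously give $|P'|=|P|$. So the final deduction about $\h'_S(k)$ is fine, but as written Part 2 is a genuine gap, and the missing idea is precisely this Hoffman-style exchange step driven by the $k$-Hoffman property.
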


Lemma \ref{qScarf} is a partial generalization of Proposition 3 from \cite{Hoffman:1979ix}.

\begin{proof}
Let $P$ be a finite subset of $S:=L\setminus\bigcup_iL_i$.

Suppose $P$ is $k$-hollow, and note that
$$\bigcap_{p\in P}\conv(P\setminus\{p\})\subseteq\conv(P)\setminus V(\conv(P)).$$
Hence
$$\left|\bigcap_{p\in P}\conv(P\setminus\{p\})\cap S\right|
\leq\big|(\conv(P)\setminus V(\conv(P)))\cap S\big|<k.$$
Thus any $k$-hollow set is also $k$-Hoffman.

Suppose $P$ is a $k$-Hoffman set.
If $P$ is not $k$-hollow, then there 
exists a set $K$ of cardinality $k$ in $(\conv(P)\setminus V(\conv(P)))\cap S$.
Because $P$ is $k$-Hoffman, at least one element $q\in K$ cannot be in
$\bigcap_{p\in P}\conv(P\setminus\{p\}\cap S$.
That is, there exists some $p_0$ in $P$ such that $q\notin\conv(P\setminus\{p_0\})$.
Define $P'=\{q\}\cup P\setminus\{p_0\}$.
Clearly $|P'|=|P|$ and $\conv(P')\subsetneq\conv(P)$.
We may apply this procedure repeatedly.
Since $S$ is discrete, it must terminate in some $k$-hollow $P'$ with
$|P'|=|P|$ and $\conv(P')\subseteq\conv(P)$.
\end{proof}


Lemmas \ref{qHoffman} and \ref{qScarf} allow us to bound the quantitative Helly number 
for some interesting discrete subsets of $\R^d$, by 
finding an upper bound on the largest $k$-hollow set.


\begin{proof}[Proof of Theorem \ref{quantitative-discrete-doignon}]
%
%
Let $P$ be a subset of $L\setminus\bigcup_i L_i$ with cardinality $n^r+1$, where $n=k\cdot2^{m+1}+1$.
We will show that $P$ is not $k$-hollow; this implies Theorem \ref{quantitative-discrete-doignon} by lemma \ref{qHoffman} and the contrapositive of lemma \ref{qScarf}.

It is a simple fact, first observed in \cite{rabinowitz}, that there must 
exist $n+1$ collinear points $z_0,z_1,\dots,z_n$ in $\conv(P) \cap L$ with $z_0,z_n\in L\setminus\bigcup_i L_i$.
Note that $z_{j}$ and $z_{j+\ell}$ cannot both be in $L_i$ if $j=0\mod\ell$; otherwise, $z_0$ would be in $L_i$.
Suppose $z_i$ is in some sublattice $L_j$ for all $i\in[\ell\cdot 2^m,(\ell+1)2^m]$.
By the above note, 
$z_{\ell\cdot 2^m+2^a}$ and $z_{\ell\cdot 2^m+2^b}$ cannot both be in the same sublattice for any $0\leq a<b\leq m$.
This is impossible, as there are $m+1$ values of $a$ and only $m$ sublattices $L_i$.
Therefore $z_i\in L\setminus\bigcup_jL_j$ for some $i\in[\ell\cdot 2^m, \ell\cdot 2^m+1,\dots,(\ell+1)\cdot2^m]$.
Since $n=k\cdot 2^{m+1}+1$, 
$$\left\{\left\{z_i\middle|i\in\left[(2\ell-1) 2^m,2\ell\cdot2^m\right]\right\}\middle|1\leq\ell\leq k\right\}$$
is a family of $k$ disjoint subsets of $\{z_1,\dots,z_{n-1}\}$, each of which contains a point in $L\setminus\bigcup_jL_j$.
It follows that $\{z_1,\dots,z_{n-1}\}$ contains at least $k$ elements of $L\setminus\bigcup_jL_j$.
None of these points can be vertices of $\conv(P)$, as they are strictly between the two endpoints.
Hence
$$\left|\left(\conv(P)\setminus V(\conv(P))\right)\cap L\setminus\bigcup_jL_j\right|\geq k.$$
That is, $P$ is not $k$-hollow.
Consequently, no $k$-hollow set $P$ can have size greater than $\left(2^{m+1}k+1\right)^r$.
Therefore $\h_S(k)\leq \h'_S(k)\leq\left(2^{m+1}k+1\right)^r$.
\end{proof}

{\section{Proofs of quantitative Tverberg theorems}\label{section-Tverberg}



\subsection{Continuous quantitative Tverberg}

\begin{proof}[Proof of Theorem \ref{theorem-tverberg-volume}]
First consider the case with $(2dm-1)(d+1)+1$ sets.   Consider $C = \{c_1, c_2, \ldots, c_n\}$ the set of centers of the balls of unit radius defined in the statement of the theorem.  If we use the standard Tverberg's theorem with the set $C$, we can find a partition of $C$ into $2dm$ sets $C_1, C_2, \ldots, C_{2dm}$ such that their convex hulls intersect in some point $p$.
	
Now we split these $2dm$ parts into $m$ blocks of $2d$ parts each in an arbitrary way.  We now show that in each block, we can pick one point of each of its corresponding $T_i$ such that the convex hull of the resulting set contains $B_{r(d)}(p)$, effectively proving the theorem (remember that $r(d) \ge d^{-2(d+1)}$).
	
Without loss of generality, we assume that one such block is $C_1, C_2, \ldots, C_{2d}$.  For each $C_i$ consider
	\[
	\tilde{C}_i = \bigcup\{T_j : c_j \in C_i\}.
	\]
	Since $\conv(T_j) \supset c_j + B_1(0)$, we have
	\[
	\conv(\tilde{C}_i) \supset \conv(C_i) + B_1(0) \supset p + B_1(0) = B_1(p).
	\]
	Thus, we can apply Theorem \ref{theorem-colored-steinitz}, our colorful Steinitz with guaranteed containment of small balls,  to the sets $\tilde{C}_1, \tilde{C}_2, \ldots, \tilde{C}_{2d}$ and obtain a set $\{t_1, t_2, \ldots, t_{2d}\}$ with $t_i \in \tilde{C}_i$ whose convex hull contains $B_{r(d)}(p)$, as desired.
	If we are given instead $n= (m\cdot [(n'-1)d+1]-1)(d+1)+1$, we can split the sets of balls' centers into $m$ blocks of size $(n'-1)d+1$, which allows us to use Proposition \ref{corollary-steinitz-for-balls} to reach the conclusion. 
\end{proof}

\noindent \textbf{Remark.}  The resulting set of the proof above uses only $2dm$ points, so most sets $T_i$ are not being used at all; this suggests that a stronger statement may hold. Moreover, once we get the first Tverberg partition, we have complete freedom on how to split the $2dm$ parts into $m$ block of equal size. Thus, our approach in fact shows that there exist $\sim m^{2dm}$ different Tverberg partitions of this kind.



With essentially the same method we can prove Theorem \ref{theorem-colorful-tverberg-volume}.

\begin{proof}[Proof of Theorem \ref{theorem-colorful-tverberg-volume}]
	Let $c_{i,j}$ be the center of a ball of unit radius contained in $\conv (T_{i,j})$.  Note that we can apply the colorful Tverberg theorem in \cite[Theorem 2.1]{bmz-optimal} to the set of centers to obtain a colorful partition of them into $\lceil 2dm +1\rceil_p-1 \ge 2dm$ sets whose convex hulls intersect.  As in the proof of Theorem \ref{theorem-tverberg-volume}, we may split these sets into $m$ blocks of exactly $2d$ parts each, leaving perhaps some sets unused.
	The same application of Theorem \ref{theorem-colored-steinitz} gives us the desired result.  If we seek a ball of almost the same radius in the end, Corollary \ref{corollary-steinitz-for-balls} completes the proof.
\end{proof}

\subsection{Discrete quantitative Tverberg}
 





We now turn our attention to the discrete quantitative Tverberg theorem, Theorem \ref{thm:quantitative-disc-tverberg}. We will use the notion of the \textit{depth} of a point inside a set to present a cleaner argument.  We say that a point $p$ has \emph{depth} at least $D$ with respect to a set $A$ if for every closed halfspace $H^+$ containing $p$, we have $|H^+\cap A|\ge D$.  We say that a set of points $P$ has \emph{depth} at least $D$ with respect to $A$ if every $p\in P$ has depth at least $D$ with respect to $A$. We will use the following lemma in the proof of Theorem \ref{thm:quantitative-disc-tverberg}.

\begin{lemma}
\label{lem:depth}
If a set of points $P$ has depth at least 1 with respect to $A$, then the convex hull of $A$ contains $P$.
\end{lemma}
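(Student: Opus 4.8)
The plan is to prove the contrapositive-flavored statement by a single application of the separating hyperplane theorem, point by point. Fix $p \in P$; I want to show $p \in \conv(A)$. Suppose not. In the setting of the theorem $A$ is a finite set of points, so $\conv(A)$ is a compact convex set, and $p$ lies outside it. Hence there is a vector $v \in \R^d$ and a scalar $c$ with $v^T a < c < v^T p$ for every $a \in A$ (strict separation of a point from a disjoint compact convex set).

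Next I would turn this separation into a halfspace that witnesses small depth. Consider the closed halfspace $H^+ = \{x \in \R^d : v^T x \ge v^T p\}$. By construction $p \in H^+$ — indeed $p$ lies on the bounding hyperplane of $H^+$ — while every $a \in A$ satisfies $v^T a < c < v^T p$, so no point of $A$ lies in $H^+$. Thus $H^+$ is a closed halfspace containing $p$ with $|H^+ \cap A| = 0 < 1$, which contradicts the assumption that $p$ has depth at least $1$ with respect to $A$. Since $p \in P$ was arbitrary, every point of $P$ lies in $\conv(A)$, i.e. $\conv(A) \supseteq P$, as claimed.

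I do not expect a genuine obstacle here; the argument is routine. The only point deserving a word of care is that the separating halfspace must be taken \emph{closed} and with its bounding hyperplane passing \emph{through} $p$, so that it legitimately "contains $p$" in the sense of the depth definition while still excluding all of $A$ — choosing the boundary hyperplane $\{x : v^T x = v^T p\}$ accomplishes exactly this. (If one preferred not to assume $A$ finite, it suffices that $\conv(A)$ be closed, e.g. by replacing $\conv(A)$ with its closure and noting the same separation still excludes $A$ itself.)
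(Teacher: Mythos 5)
Your argument is correct and is exactly the paper's proof (which is a one-line appeal to a separating hyperplane), just written out with the closed halfspace made explicit. No further comment is needed.
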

\begin{proof}
If this were not true, then there would exist some hyperplane $H$ separating $\conv(A)$ from a point $p\in P$, contradicting the definition of having depth at least one.
\end{proof}





\begin{proof}[Proof of Theorem  \ref{thm:quantitative-disc-tverberg}]
Suppose that $A\subseteq S$ contains $\h_S(k)(m-1)kd+k$ points.  We will construct an $m$-Tverberg partition of $A$. For this, consider the family of convex sets
$$\mathcal F=\left\{ F|F\subset A,|F|=(\h_S(k)-1)(m-1)kd+k\right\}.$$

Note that for any $F\in\mathcal F$, we have $|A\setminus F|=(m-1)kd$.  Therefore, if $\mathcal G$ is a subfamily of $\mathcal F$ with cardinality $\h_S(k)$, we must have
$$\left| A\setminus\bigcap_{G\in \mathcal G} G\right| \le \h_S(k)(m-1)kd.$$
Since there are $\h_S(k)(m-1)kd+k$ points in $A$, $\bigcap_{G \in \mathcal G} G$ must contain at least $k$ elements of $S$.  Hence, by the definition of the quantitative Helly number $\h_S(k)$, $\bigcap_{F\in \mathcal F} F$ contains at least $k$ elements of $S$. Let $P=\{p_1,p_2,\dots,p_k\}$ be $k$ of those points.

\textbf{Claim 1.} The set $P$ has depth at least $(m-1)kd+1$ with respect to $A$.

Suppose that this is not true. Then, some closed halfspace $H^+$ contains an element of $P$ and at most $(m-1)kd$ elements of $A$.  This means that there are at least $(\h_S(k)-1)(m-1)kd+k$ elements of $A$ in the complement of $H^+$. However, this means that some $F\in \mathcal{F}$ lies in the complement of $H^+$, a contradiction, since every such $F$ must contain all points of $P$.

The theorem now follows immediately from the following claim:

\textbf{Claim 2.} For each $j\le m$, we can find $j$ disjoint subsets $A_1,A_2,\dots,A_j \subset A$ such that $P\subset \conv(A_i)$ for every $i$.

We proceed by induction on $j$.  In the base case of $j=1$, Claim 1 tells us that $P$ has depth at least one with respect to $A$.  Hence, by Lemma \ref{lem:depth}, we have $P\subset \conv(A)$.  Now suppose that $j>1$.  By our inductive hypothesis, we can find $j-1$ disjoint subsets $A_1,A_2,\dots,A_{j-1}\subset A$ such that $P\subset \conv(A_i)$ for every $i$.

\textbf{Case 1.} $k\ge 2$.

Applying Theorem \ref{theorem-quantitative-discrete-caratheodory}, we may assume that $A_1,A_2,\ldots,A_{j-1}$ have cardinality at most $kd$, so the depth of $P$ is diminished by at most $kd$ if we remove $A_i$ from $A$.  It follows that the depth of $P$ is at least 1 with respect to $A\setminus \cup_{1\le i\le j-1} A_i$.  Hence, by Lemma \ref{lem:depth}, we can find $A_j\subset A$ disjoint from $A_1,A_2,\ldots,A_{j-1}$ such that $P\subset \conv(A_j)$.

\textbf{Case 2.} $k=1$.

In this case, $P=\{p\}$. By standard Carath\'eodory's theorem, we may assume that each $A_i$ (for $1\le i\le j-1$) either has cardinality less than $d+1$ or else has cardinality $d+1$ and defines a full-dimensional simplex. Notice that every halfspace containing $p$ can contain at most $d$ points of $A_i$, since $p\in \conv(A_i)$. Proceeding as in Case 1, we conclude that $p$ has depth at least 1 with respect to $A\setminus \cup_{1\le i\le j-1} A_i$.  Hence, by Lemma \ref{lem:depth}, we can find $A_j\subset A$ disjoint from $A_1,A_2,\ldots,A_{j-1}$ such that $P\subset \conv(A_j)$.

This completes our induction and proves the theorem.
\end{proof}}


\section*{Acknowledgments} We are grateful to A. Barvinok for his comments and suggestions. 
This work was partially supported by the Institute for Mathematics and its Applications (IMA) in Minneapolis, MN funded by the National Science Foundation (NSF). The authors are grateful for the wonderful working environment that lead to this paper. The research of De Loera and La Haye was also supported by a UC MEXUS grant. Rolnick was additionally supported by NSF grants DMS-1321794 and 1122374.

\bibliographystyle{plain}

\bibliography{references}

\noindent J.A. De Loera and R.N. La Haye\\
\textsc{
Department of Mathematics \\
University of California, Davis \\
Davis, CA 95616
 \\
}\\[0.3cm]
\noindent D. Rolnick \\
\textsc{
 Department of Mathematics \\
 Massachusetts Institute of Technology \\
 Cambridge, MA 02139
 \\
}\\[0.3cm]
\noindent P. Sober\'on \\
\textsc{
Mathematics Department \\
University of Michigan \\
Ann Arbor, MI 48109-1043
}\\[0.3cm]

\noindent \textit{E-mail addresses: }\texttt{deloera@math.ucdavis.edu, rlahaye@math.ucdavis.edu, drolnick@math.mit.edu, psoberon@umich.edu}
\end{document}